\documentclass[11pt,a4paper,reqno]{amsart}
\usepackage{amsmath,amsfonts,amssymb,amsthm,amscd,color}
\usepackage[english]{babel}
\usepackage[mathscr]{eucal}
\usepackage{graphicx}
\usepackage{hyperref,pdfsync}

\setlength{\textwidth}{17cm} \setlength{\textheight}{24cm}
\setlength{\topmargin}{-.5cm} \setlength{\oddsidemargin}{-.5cm}
\setlength{\evensidemargin}{-0.5cm}



\newcommand{\g}{\gamma}

\renewcommand{\d}{\delta}

\newcommand{\na}{\nabla}
\newcommand{\om}{\omega}
\newcommand{\OM}{\Omega}

\renewcommand{\t}{\tau}

\newcommand{\e}{\varepsilon}
\newcommand{\f}{\varphi}
\newcommand{\F}{\Phi}

\newcommand{\p}{\psi}
\renewcommand{\P}{\Psi}



\newcommand{\N}{{\mathbb N}}
\newcommand{\R}{{\mathbb R}}

\newcommand{\RR}{{\mathbb R}^2}






\newcommand{\Cc}{{\mathcal C}}

\newcommand{\Tc}{{\mathcal T}}


\newcommand{\curl}{{\rm curl}\,}

\renewcommand{\div}{{\rm div}\,}

%



\newcommand{\pd}{\partial}





\newcommand{\supp}{\operatorname{supp\,}}

\newcommand{\loc}{\operatorname{{loc}}}








\newtheorem{theorem}{Theorem}[section]
\newtheorem{proposition}[theorem]{Proposition}
\newtheorem{lemma}[theorem]{Lemma}

\newtheorem{definition}[theorem]{Definition}

\theoremstyle{remark}
\newtheorem{remark}[theorem]{Remark}

\numberwithin{equation}{section}


\begin{document}


\title[Topography influence on the Lake equations]{Topography influence on the Lake equations in bounded domains}
\author[C. Lacave, T. Nguyen, B. Pausader]{Christophe Lacave \and Toan T. Nguyen \and  Benoit Pausader }

\address[C. Lacave]{Universit\'e Paris-Diderot (Paris 7)\\
Institut de Math\'ematiques de Jussieu - Paris Rive Gauche\\
UMR 7586 - CNRS\\
B\^atiment Sophie Germain \\
Case 7012\\
75205 PARIS Cedex 13\\
France.} \email{lacave@math.jussieu.fr}

\address[T. Nguyen]{Department of Mathematics, Pennsylvania State University, University Park, State College, PA 16802.} \email{nguyen@math.psu.edu}

\address[B. Pausader]{Laboratoire Analyse, Géom\'etrie et Applications,
UMR 7539, Institut Galil\'ee, Universit\'e Paris 13, France.} \email{pausader@math.univ-paris13.fr}

\date{\today}

\begin{abstract}
We investigate the influence of the topography on the lake equations which describe the two-dimensional horizontal velocity of a three-dimensional incompressible flow.  We show that the lake equations are structurally stable under Hausdorff approximations of the fluid domain and $L^p$ perturbations of the depth. As a byproduct, we obtain the existence of a weak solution to the lake equations in the case of singular domains and rough bottoms. Our result thus extends earlier works by Bresch and M\'etivier treating the lake equations with a fixed topography and by G\'erard-Varet and Lacave treating the Euler equations in singular domains. 
\end{abstract}

\maketitle

\tableofcontents

\section{Introduction}\label{sec-Intro}
The lake equations are introduced in the physical literature as a two-dimensional geophysical model to describe the evolution of the vertically averaged horizontal component of the three-dimensional velocity of an incompressible Euler flow; see for example \cite{Greenspan,LOT,BM} and the references therein for physical discussions and derivation of the model. Precisely, the lake equations with prescribed initial and boundary conditions are
\begin{equation}
\label {lake-eqs} 
\left\lbrace \begin{aligned}
&\pd_t (b  v  ) + \div (b  v  \otimes v ) + b  \na p  =0 & \text{ for }(t,x)\in \R_+ \times \OM , \\
&\div (b  v ) =0 & \text{ for }(t,x)\in \R_+ \times \OM  ,\\
&(b  v )\cdot \nu =0 & \text{ for }(t,x)\in \R_+ \times \pd \OM , \\
&v (0,x) = v^0 (x) & \text{ for }x\in  \OM .
\end{aligned}
\right.
\end{equation}
Here $v=v(t,x)$ denotes the two-dimensional horizontal component of the fluid velocity, $p=p(t,x)$ the pressure, $b = b(x)$ the vertical depth which is assumed to be varying in $x$, $\OM \subset \R^2$ is the spatial bounded domain of the fluid surface, and $\nu$ denotes the inward-pointing unit normal vector on $\partial\OM$.

In case that $b$ is a constant, \eqref {lake-eqs}  simply becomes the well-known two-dimensional Euler equations, and the well-posedness is widely known since the work of Wolibner \cite{Wolibner} or Yudovich \cite{yudo}. When the depth $b$ varies but is bounded away from zero, the well-posedness is established in Levermore, Oliver and Titi \cite{LOT}. Most recently, Bresch and M\'etivier \cite{BM} extended the work in \cite{LOT} by allowing the varying depth to vanish on the boundary of the spatial domain. In this latter situation, the corresponding equations for the stream function are degenerate near the boundary and the elliptic techniques for degenerate equations are needed to obtain the well-posedness.

{\it In this paper, we are interested in stability and asymptotic behavior of the solutions to the above lake equations under perturbations of the fluid domain or rather perturbations of the geometry of the lake which is described by the pair $(\Omega,b)$.}
Our main result roughly asserts that the lake equations are persistent under these topography perturbations. That is, if we let $(\OM_n,b_n)$ be any sequence of lakes which converges to $(\OM,b)$ (in the sense of Definition \ref{def-ConvLake}), then the weak solutions to the lake equations on $(\OM_n,b_n)$ converge to the weak solution on the limiting lake $(\OM,b)$. In particular, we obtain strong convergence of velocity in $L^2$ and we allow the limiting domain $\OM$ to be very singular as long as it can be approximated by smooth domains $\OM_n$ in the Hausdorff sense. The depth $b$ is only assumed to be merely bounded. As a byproduct, {\it we establish the existence of global weak solutions of the equations \eqref{lake-eqs} for very rough lakes $(\OM,b)$. }

\bigskip

Let us make our assumptions on the lake more precise. We assume that the (limiting) lake $(\Omega,b)$ has a finite number of islands, namely: 
\begin{itemize}
\item[(H1)] $ \displaystyle \OM := \widetilde{\OM} \setminus \Bigl( \bigcup_{k=1}^N \Cc^k \Bigl)$,
where $\widetilde \OM$, $\Cc^k$ are bounded simply connected subsets of $\R^2$, $\widetilde \OM$ is open, and $\Cc^k$ are disjoints and compact subsets of $\widetilde \OM$.
\end{itemize}

\medskip

We assume that the boundary is the only place where the depth can vanish, namely:

\begin{itemize}
\item[(H2)]  There is a positive constant $M$ such that 
$$ \quad 0< b(x)\leq M \qquad\text{ in }  \OM.$$
In addition, for any compact set $K\subset \OM$ there exists positive numbers $\theta_K$ such that $ b(x) \ge \theta_K$ on $K$. 

\end{itemize}

\medskip

In the case of smooth lakes, we add another hypothesis. Near each piece of boundary, we allow the shore to be either of non-vanishing or vanishing topography with constant slopes in the following sense:

\begin{itemize}
\item[(H3)] There are  small neighborhoods  $\mathcal{O}^0$ and $\mathcal{O}^k$ of $\partial\widetilde{\OM}$ and $\partial\Cc^k$ respectively, such that, for $0\le k\le N$,
\begin{equation}\label{BForm}
b(x)=c(x)\left[d(x)\right]^{a_k} \qquad \text{ in } \mathcal{O}^k\cap \OM,
\end{equation}
where $c(x), d(x)$ are bounded $C^3$ functions in the neighborhood of the boundary, $c(x)\ge \theta>0$, $a_k \ge 0 $. Here the geometric function $d(x)$ satisfies $\Omega=\{d>0\}$ and $\nabla d\ne0$ on $\partial\Omega$.
\end{itemize}

In particular, around each obstacle $\mathcal{C}^k$, we have either {\em Non-vanishing topography} when $a_k=0$, in which case $b(x)\ge\theta$ or {\em Vanishing topography} if $a_k>0$ in which case $b(x)\to0$ as $x\to\partial\mathcal{C}^k$. As (H3) will be only considered for smooth lakes $\partial \OM \in C^3$, we note that up to a change of $c$, $\theta$,  we may take $d(x)=\hbox{dist}(x,\partial\OM)$.

\subsection{Weak formulations}
As in the case of the 2D Euler equations, it is crucial to use the notion of generalized vorticity,  which is defined by 
\begin{equation*}
\om := \frac1{b} \curl v = \frac{1}{b} (\pd_1 v_{2} - \pd_2 v_{1}). 
\end{equation*}
Indeed, taking the curl of the momentum equation, it follows that the vorticity formally verifies the following transport equation
\begin{equation}\label{transport}
 \pd_t (b \om) + \div(b v \om)=0.
\end{equation}
Thanks to the condition $\div(bv)=0$, we will show in Lemma \ref{lem-vorticity} that the $L^p$ norm of $b^{\frac1p}\omega$ is a conserved quantity for any $p\in [1,\infty]$, which provides an important estimate on the solution.

When $\Omega$ is not regular, the condition  $bv^0 \cdot \nu\vert_{\partial \Omega} = 0$ has to be understood in a weak sense:
\begin{equation}\label{imperm}
 \int_\Omega b(x) v^0(x) \cdot h(x)\, dx = 0, 
\end{equation}
for any test function $h$ in the function space $G(\Omega)$ defined by
\begin{equation*}
G (\Omega):=\Big \{w\in L^2(\Omega) \ : \ w=\nabla p, \ \text{ for some } p\in H^1_{\loc}(\Omega) \Big \}.
\end{equation*}
For $bv^0 \in L^2(\Omega)$, such a condition  is equivalent to
\begin{equation}\label{impermbis}
b v^0 \in \mathcal{H}(\Omega) ,
\end{equation}
where $\mathcal{H}(\Omega)$ denotes the completion of the function space $\{ \varphi \in C_c^\infty(\Omega) \ | \ \div \varphi =0 \}$ with respect to the usual $L^2$ norm. This equivalence can be found, for instance, in \cite[Lemma III.2.1]{Galdi}. Moreover, in \cite{Galdi} the author points out that if $\Omega$ is a regular bounded domain and if $bv^0$ is a sufficiently smooth function, then $bv^0$ verifies \eqref{imperm} if and only if $\div bv^0 = 0$ and $bv^0 \cdot \nu\vert_{\partial \Omega} = 0$.

Similarly to \eqref{imperm}, the weak form of the  divergence free and tangency conditions on $bv$  also reads:  
\begin{equation} \label{imperm2}
\forall h \in C_c^\infty\left([0,+\infty); G(\Omega)\right), \quad \int_{\R_+} \int_\Omega b(x) v(t,x)  \cdot  h(t,x) \, dxdt = 0.
\end{equation}

Next, we introduce several notions of global weak solutions to the lake equations. The first is in terms of the velocity.

\begin{definition}\label{defi-weak-velocity} Let $v^0$ be a vector field such that
\[ \div(b v^0)=0 \text{ in } \Omega, \quad bv^0\cdot \nu = 0 \text{ on } \partial\Omega, \text{ in the sense of \eqref{imperm}}\]
and
\[ \frac{\curl v^0}{b}\in L^\infty(\OM).\]
We say that $v$ is a global weak solution of the velocity formulation of the lake equations \eqref{lake-eqs}  with initial velocity $v^0$ if

i) $\frac{\curl v}{b} \in L^\infty (\R_+ \times \Omega)$ and $\sqrt{b}v \in L^{\infty}(\R_+; L^2(\Omega))$;

ii) $\div (bv)=0$ in $\Omega$ and $bv\cdot \nu=0$ on $\partial\Omega$ in the sense of \eqref{imperm2}; 

iii) the momentum equation in \eqref{lake-eqs} is verified in the distributional sense. That is, for all divergence-free vector test functions $\Phi \in C^\infty_c([0,\infty)\times \overline{\OM})$ tangent to the boundary, there holds that
\begin{equation}\label{eq-vel}
 \int_0^\infty\int_{\OM}\Phi_t \cdot v \, dxdt +\int_0^\infty \int_{\OM}(bv\otimes v): \nabla\Bigl(\frac{\Phi}b \Bigl) \, dxdt+\int_{\OM}\Phi(0,x) \cdot v^0(x)\, dx=0.
\end{equation}
\end{definition}

We emphasize that the test functions $\Phi$ are allowed to be in $C^\infty_c([0,\infty)\times \overline{\OM})$ rather than in $C^\infty_c([0,\infty)\times {\OM})$.  Namely, for any test functions $\Phi$ belonging to $C^\infty_c([0,\infty)\times \overline{\OM})$, there exists $T>0$ such that $\Phi \equiv 0$ for any $t>T$, and  such that $\Phi(t,\cdot) \in C^\infty(\overline{\OM})$ for any $t$, in the sense that $D^k \Phi(t,\cdot)$ is bounded and uniformly continuous on $\Omega$ for any $k\geq 0$ (see e.g. \cite{Galdi}).

In the above definition, it does not appear immediately clear how to make sense of \eqref{eq-vel} for test functions supported up to the boundary due to the term $\Phi/ b$ which would then blow up at the boundary. For this reason, let us introduce a weak {\it interior} solution $v$ of the velocity formulation to be the weak solution $v$ as in Definition \ref{defi-weak-velocity} with the test functions $\Phi$ in \eqref{eq-vel} being supported {\it inside} the domain, i.e. $\Phi\in C^\infty_c([0,\infty)\times\Omega)$. For this weaker solution, \eqref{eq-vel} then makes sense under the regularity (i) when $b\in W^{1,\infty}_{\loc}(\Omega)$ (because (H2) gives an estimate of $b^{-1}$ locally in space). Later on in  Appendix \ref{sect equiv defi}, we show that \eqref{eq-vel} indeed makes sense with the test functions supported up to the boundary when the lake is smooth, even in the case of vanishing topography.

\medskip

The second formulation of weak solutions is in terms of the vorticity and reads as follows.

\begin{definition}\label{defi-weak-sol}Let $(v^0,\om^0)$ be a pair such that
\begin{equation}\label{defi-a}
\div(b v^0)=0 \text{ in } \Omega, \quad bv^0\cdot \nu = 0 \text{ on } \partial\Omega \qquad \text{ (in the sense of \eqref{imperm}}) 
\end{equation}
and
\begin{equation}\label{defi-b}
\omega^0\in L^\infty(\OM), \quad \curl v^0 = b \om^0 \qquad \text{(in the distributional sense)}.
\end{equation}
We say that $(v,\omega)$ is a global weak solution of the vorticity formulation of the lake equations on $(\Omega,b)$  with initial condition $(v^0,\omega^0)$ if

i) $\omega \in L^\infty (\R_+ \times \Omega)$ and $\sqrt{b}v \in L^{\infty}(\R_+; L^2(\Omega))$;

ii) $\div (bv)=0$ in $\Omega$ and $bv\cdot \nu=0$ on $\partial\Omega$ in the sense of  \eqref{imperm2};

iii) $\curl v = b \omega$ in the distributional sense;

iv) the transport equation \eqref{transport} is verified in the sense of distribution. That is, for all test functions $\varphi \in C^\infty_c([0,\infty)\times \overline{\OM})$ such that $\partial_{\tau} \varphi\vert_{\partial\OM}\equiv0$ (i.e. constant on each piece of boundary), there holds that
\begin{equation}\label{eq-vort}
 \int_0^\infty\int_{\OM}\f_t b \om \, dxdt +\int_0^\infty \int_{\OM}\na\f \cdot v b \om \, dxdt+\int_{\OM}\f(0,x)b \om^0(x)\, dx=0.
\end{equation}
\end{definition}

We also introduce a weaker intermediate notion:  weak {\it interior} solution of the vorticity formulation to be the weak solution $(v,\omega)$ as in Definition \ref{defi-weak-sol} with the test functions being  supported {\it inside} the domain: i.e. $\varphi\in C^\infty_c([0,\infty)\times\Omega)$.

We will establish the relations between these definitions in Appendix \ref{sect equiv defi}. For example, when the lake is smooth, all velocity and vorticity formulations are equivalent. 

Following the proof of Yudovich \cite{yudo}, Levermore, Oliver and Titi \cite{LOT} established existence and uniqueness of a global weak solution (with the vorticity formulation) in the case of non-vanishing topography, assuming the lake is smooth and simply connected.  Recently, Bresch and M\'etivier \cite{BM} extended the well-posedness to the case of vanishing topography. In both of these works, $\Omega$ is assumed to be simply connected, $\partial \Omega \in C^3$, and $b\in C^3(\overline{\Omega})$. The essential tool in establishing the well-posedness is a Calder\'on-Zygmund type inequality. This inequality is highly non trivial to obtain if the depth vanishes, and the proof requires to work with degenerate elliptic equations.

In Section \ref{sect WP}, we shall sketch the proof of the well-posedness of the lake equations under our current setting (H1)-(H3):
\begin{theorem}\label{theo-WP} Let $(\Omega,b)$ be a lake verifying Assumptions (H1)-(H3) and $(\partial \OM,b)\in C^3\times C^3(\overline{\Omega})$. Then for any pair $(v^{0},\om^0)$ such that $b^{-1} \curl v^0=\om^0\in L^{\infty}(\OM)$, there exists a unique global weak solution $(v,\omega)$ to the lake equations that verifies both the velocity and vorticity formulations.  Furthermore, we have that
\[\omega \in C(\R_+,L^r(\Omega)),\quad v \in C(\R_+,W^{1,r}(\Omega)), \quad v\cdot \nu =0 \text{ on }\partial\Omega,\]
for arbitrary $r$ in $  [1,\infty)$ and the circulations of $v$ around $\mathcal{C}^k$ are conserved for any $k=1\dots N$.
\end{theorem}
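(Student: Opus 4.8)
The plan is to establish Theorem~\ref{theo-WP} by following the Yudovich--type scheme of \cite{LOT,BM}, adapted to our assumptions (H1)--(H3). The essential ingredient is the elliptic theory for the stream-function problem associated to the degenerate operator $\div(b^{-1}\nabla\cdot)$; once this is in hand, the construction and uniqueness proceed along classical lines. First I would reduce the system to a single scalar transport equation. Given $\omega^0\in L^\infty(\OM)$, the ansatz $\curl v=b\omega$ together with $\div(bv)=0$ and the tangency and circulation conditions determines $v$ (up to the prescribed circulations around the islands $\Cc^k$) through a \emph{generalized stream function} $\psi$ solving $\div(b^{-1}\nabla\psi)=\omega$, with $v=b^{-1}\nabla^\perp\psi$. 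Because $\OM$ is only finitely connected rather than simply connected, $\psi$ is determined only up to adding harmonic-type correctors $\psi_k$, one per island, and the $N$ circulations of $v$ around the $\Cc^k$ fix these free constants; I would set up this decomposition carefully and record the Biot--Savart-type map $\omega\mapsto v=:K[\omega]$.

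The core analytic step is the elliptic regularity for the degenerate problem. Under (H3), near each piece of boundary $b(x)=c(x)\,d(x)^{a_k}$ with $a_k\ge 0$, so the operator $\div(b^{-1}\nabla\cdot)$ degenerates like $\div(d^{-a_k}\nabla\cdot)$. The goal is a Calder\'on--Zygmund estimate of the form: for $\omega\in L^r(\OM)$, the solution satisfies $v=K[\omega]\in W^{1,r}(\OM)$ with $\|v\|_{W^{1,r}}\le C_r\|\omega\|_{L^r}$ for all $r\in[1,\infty)$, together with $v\cdot\nu=0$ on $\partial\OM$; this is precisely the inequality that \cite{BM} prove in the simply connected case. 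I would obtain it by (i) interior regularity away from the boundary, where $b$ is bounded below by (H2); and (ii) boundary regularity in the neighborhoods $\Oc^k$, using the explicit structure in (H3) and the weighted elliptic estimates for the operator with the power weight $d^{-a_k}$. The multiply connected topology is handled by treating the finitely many correctors $\psi_k$ separately, since each solves a homogeneous degenerate problem with smooth (in fact constant) boundary data and contributes a fixed, smooth velocity field.

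With the Biot--Savart bound established, I would run the standard fixed-point/iteration argument. The vorticity $\omega$ is transported by the divergence-free (with respect to the measure $b\,dx$) velocity field, so the flow map generated by $v$ is well defined once $v$ is log--Lipschitz; here the key is the borderline estimate $\|v\|_{W^{1,r}}\le C r\|\omega\|_{L^\infty}$ as $r\to\infty$, which yields the Osgood/log--Lipschitz modulus of continuity for $v$ and hence uniqueness of the flow and of the solution, exactly as in Yudovich's argument. Propagating $\omega$ along this flow preserves all $L^p$ norms of $b^{1/p}\omega$ by Lemma~\ref{lem-vorticity}, giving the a priori bound $\omega\in L^\infty(\R_+\times\OM)$ and $\sqrt{b}\,v\in L^\infty(\R_+;L^2)$. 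Existence follows from iterating the map $\omega\mapsto v\mapsto(\text{transport})\mapsto\omega$ and passing to the limit; the continuity-in-time statements $\omega\in C(\R_+,L^r)$ and $v\in C(\R_+,W^{1,r})$ come from the elliptic estimate combined with continuity of the transport, and conservation of the circulations is immediate since the transport equation does not alter the cohomology class fixed in the Biot--Savart decomposition.

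I expect the main obstacle to be the degenerate elliptic estimate near the vanishing-topography boundary, i.e. obtaining the full $W^{1,r}$ Calder\'on--Zygmund bound with constant growing linearly in $r$, uniformly across the admissible exponents $a_k$. This is the technically hardest point—indeed it is the crux of \cite{BM}—because the weight $d^{-a_k}$ can be genuinely singular and the usual flat-boundary/Fourier-multiplier arguments must be replaced by weighted estimates adapted to the power-type degeneracy; the multiply connected geometry adds bookkeeping but no essential new difficulty. Since the statement explicitly says we \emph{sketch} the proof under (H1)--(H3), I would present the reduction and the iteration in full and cite or adapt the weighted elliptic estimates of \cite{BM} for the boundary analysis, indicating the modifications needed to cover all pieces $\partial\widetilde\OM$ and $\partial\Cc^k$ simultaneously.
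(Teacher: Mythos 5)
Your overall architecture matches the paper's: reduce to the vorticity transport equation via a Biot--Savart decomposition adapted to the multiply connected geometry, import the degenerate Calder\'on--Zygmund estimate of \cite{BM} as the key analytic input, and close with Yudovich's uniqueness argument (the paper runs it as an $L^2$ energy estimate on $\sqrt{b}(v_1-v_2)$ using $\|\na v\|_{L^p}\le Cp$ and letting $p\to\infty$, which is the energy-space incarnation of the Osgood/log-Lipschitz argument you invoke). The genuine divergence is in the existence step: you propose a Lagrangian fixed-point iteration on the flow map, whereas the paper regularizes by an artificial viscosity $\e\div(b_\e\na\om_\e)$ with Dirichlet data on the vorticity (following \cite{LOT1}), proves uniform bounds, and extracts a weak interior solution by a compactness argument built on the Leray projector and Arzel\`a--Ascoli. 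Your route is viable once the Calder\'on--Zygmund bound gives log-Lipschitz velocity, but the paper's choice is deliberate: the vanishing-viscosity construction only uses $L^2$-level estimates and is exactly the compactness machinery reused in Section \ref{sect 3} for singular lakes, where no $W^{1,p}$ bound is available; your flow-map construction would not transfer to that setting. Two smaller points. First, your stream-function equation should read $\div(b^{-1}\na\psi)=b\om$ (not $=\om$), since $\curl(b^{-1}\na^\perp\psi)=\div(b^{-1}\na\psi)$ and the constraint is $\curl v=b\om$. Second, conservation of the circulations is not quite ``immediate'': for weak solutions the circulations are only defined in the generalized sense \eqref{circulation}, and their constancy is a Kelvin-type statement that the paper proves either by holding them fixed in the viscous approximation and passing to the limit, or by the explicit test-function computation of Proposition \ref{circ const}; in your Lagrangian picture the analogous verification (that the weakly defined $\gamma^k(v(t))$ is time-independent) still has to be carried out.
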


In fact, when the domain is not simply connected, the vorticity alone is not sufficient to determine the velocity uniquely from \eqref{defi-a}-\eqref{defi-b}. We will then introduce in Section \ref{section 2.1} the weak circulation for lake equations, derive the Biot-Savart law (the law which yields the velocity in term of the vorticity and circulations), and prove the Kelvin's theorem concerning conservation of the circulation.

\subsection{Assumptions}
 For each $n\ge 1$, let $(\Omega_n,b_n)$ be a lake of either vanishing or non-vanishing or mixed-type topography as described above in (H1)-(H3) with constants $\theta_n,M_n,a_{0,n},\dots,a_{N,n}$ and function $d_n(x)$.
 
In what follows, we write $(\Omega_{0},b_{0}) = (\Omega,b)$, which will play the role of the limiting lake. We assume that these lakes have the same finite number of islands $N$, namely for any $n\geq 0$
\[ \OM_n := \widetilde{\OM}_n \setminus \Bigl( \bigcup_{k=1}^N \Cc_n^k \Bigl) ,\]
where $\widetilde \OM_n, \Cc_n^k$ are simply connected subsets of $\R^2$, $\widetilde \OM_n$ is open, and $\Cc_n^k\subset\widetilde{\OM}_n$ are disjoint and compact. In addition, let $D$ be a big enough subset so that $\OM_n \subset D$, $n\ge 0$.  

\begin{definition}\label{def-ConvLake} Assume that $(\partial \OM_n,b_n)\in C^3\times C^3(\overline{\Omega_{n}})$ for all $n\ge 1$.
We say that the sequence of lakes $(\OM_n,b_n)$ converges to the lake  $(\OM,b)$ as $n \to \infty$ if there hold
\begin{itemize}
\item $\widetilde \OM_n \to \widetilde \OM$ in the Hausdorff sense;
\item $\Cc_n^k \to \Cc^k$ in the Hausdorff sense;
\item $b_n$ is uniformly bounded in $L^\infty(\OM)$ and for any compact set $K\subset \Omega$ there exist positive $\theta_{K}$ and sufficiently large $n_0(K)$ such that $b_{n}(x)\geq \theta_{K}$ for all $x\in K$ and $n\ge n_0(K)$,
\item $b_n \to b$ in\footnote{Note that since $b_n$ is uniformly bounded in $L^\infty$, we directly see that the convergence holds in $L^p$, $p<\infty$.} $L^{1}_{\loc} (\OM)$.
\end{itemize}
\end{definition}

Here $\Omega_n$ converges to $\Omega$ in the Hausdorff sense if and only if the Hausdorff distance between $\Omega_n$ and $\Omega$ converges to zero. See for example \cite[Appendix B]{GV_lac} for more details about the Hausdorff topology, in particular the Hausdorff convergence implies the following proposition: for any compact set $K\subset \Omega$, there exists $n_{K}>0$ such that $K\subset \Omega_{n}$ for all $n\geq n_{K}$, which gives sense to the fourth item of the above definition.

Definition \ref{def-ConvLake}, allows in particular the limit $a_n\to a_0 = 0$, with $a_n$ introduced as in (H3). This means that the passage from a lake of the vanishing type in which the slope gets steeper and steeper to a lake of non-vanishing type is allowed. This appears to be complicated to deduce from the analysis in \cite{BM}, where the condition $a_0>0$ is crucial. Remarkably, it turns out that uniform estimates of the velocity in $W^{1,p}$ are not needed in order to pass to the limit. As will be shown, $L^2$ estimates are sufficient.

\subsection{Main results}

As mentioned, a velocity field is uniquely determined by its vorticity and its circulation around each obstacle. We recall that when the velocity field $v$ is continuous, the circulation around each obstacle $\mathcal{C}^k$ is classically defined by 
$$\gamma^k_{\mathrm{cl}} : = \oint_{\partial \mathcal{C}^k} v\cdot d{\bf s}.$$ 
However, with a low regularity velocity field as in our definitions of weak solutions, such a path integral might not be well defined a priori. We are led to introduce the generalized circulation 
$$\gamma^k(v) : = \int_\Omega \div (\chi^k  v^\perp ) \; dx $$  
where $\chi^k$ is some smooth cut-off function that is equal to one in a neighborhood of $\mathcal{C}^k$ and zero far away from $\mathcal{C}^k$. Observing that $\div (\chi^k  v^\perp ) = - \nabla^\perp \chi^k  \cdot v - \chi^k \curl v$, the generalized circulation is well defined for the weak solution $v$ by condition (i) in Definitions \ref{defi-weak-velocity} and \ref{defi-weak-sol} (indeed, (H2) implies that $v$ belongs to $L^2(\supp \nabla^\perp \chi^k)$). Later in Section \ref{sect WP}, we will show that such a generalized circulation enjoys the same property as that of the classical one $\gamma^k_\mathrm{cl}$. Most importantly, the velocity field is uniquely determined by the vorticity and the circulations; see Section \ref{sect WP}. 

Our assumptions on the convergence of the initial data are in terms of the vorticity and circulations. Precisely, we assume that the initial vorticity $\omega^0_n$ is uniformly bounded:
\begin{equation}\label{ConvHyp1}
\| \om^0_n \|_{L^\infty(\OM_n)}\leq M_0,
\end{equation} for some positive $M_0$, and there holds the convergence 
\begin{equation}\label{ConvHyp1b}
 \omega^0_n \rightharpoonup \omega^0\text{ weakly in }L^1(D),
\end{equation}
as $n \to \infty$. Here $\omega_n^0$ is extended to be zero in $D\setminus \OM_n$. Concerning the circulations, we assume that the sequence $\gamma_n=\{\gamma^k_n\}_{1\le k\le N}\in\mathbb{R}^N$ converges to a given vector $\gamma=\{\gamma^k\}_{1\le k\le N}$ in the sense that
\begin{equation}\label{ConvHyp2}
\sum_{k=1}^N\vert \gamma^k_n-\gamma^k\vert\to 0,
\end{equation} as $n \to \infty$. 
Then, for each $n\ge 1$, we define the initial velocity field $v^0_n$ to be the unique solution of the following elliptic problem in $\OM_n$:
\begin{equation}\label{v0n}
\div (b_{n} v_{n}^0)=0, \quad (b_{n} v_{n}^0)\cdot \nu\vert_{\pd \OM_{n}}=0  , \quad \curl v_{n}^0 = b_{n} \om^0_n,\quad \gamma^k_{n}(v_{n}^0)=\gamma^k_{n} \ \forall 1\le k\le N.
\end{equation}
The existence and uniqueness of $v^0_n$ are established in Section \ref{sect WP}. 

Our first main theorem is concerned with the stability of the lake equations:

\begin{theorem}\label{theo-main}
Let $(\Omega,b)$ be a lake satisfying Assumptions (H1)-(H3) with $(\partial \OM,b)\in C^3\times C^3(\overline{\Omega})$. Assume that  there is a sequence of lakes $(\Omega_n,b_n)$ which converges to $(\Omega,b)$ in the sense of Definition \ref{def-ConvLake}. Assume also that $(\omega^0_n,\gamma_n,v^0_n)$ are as in \eqref{ConvHyp1}--\eqref{v0n}.
Let $(v_n,\omega_n)$ be the unique weak solution of the lake equations \eqref {lake-eqs}  on the lake $(\OM_n, b_n)$ with initial velocity $v_{n}^0$, $n \ge 1$. Then, there exists a pair $(v,\omega)$ so that 
\[ v_n \to v \text{ strongly in } L^2_{\loc}(\R_+; L^2(D)),\qquad  \om_n\rightharpoonup  \om \text{ weak-$*$ in } L^\infty(\R_+\times D) .\]
Furthermore, $(v,\omega)$ is the unique weak solution of the lake equations on the lake $(\OM, b)$ with initial vorticity $\omega^0$ and initial circulation $\gamma \in \R^N$.
\end{theorem}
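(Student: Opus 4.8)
The plan is to run a compactness argument on the sequence of smooth lakes, using uniform a priori bounds to extract weak limits, upgrading the velocity convergence to strong $L^2$ so as to handle the nonlinear term, and finally identifying the limit through the uniqueness statement of Theorem \ref{theo-WP} (available since the limiting lake $(\OM,b)$ is smooth). First I would record the two conserved quantities for each smooth lake $(\OM_n,b_n)$. Lemma \ref{lem-vorticity} with $p=\infty$ gives $\|\om_n(t)\|_{L^\infty(\OM_n)}=\|\om^0_n\|_{L^\infty(\OM_n)}\le M_0$, so, extending $\om_n$ by zero on $D\setminus\OM_n$, the family $\om_n$ is bounded in $L^\infty(\R_+\times D)$. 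The energy identity $\|\sqrt{b_n}\,v_n(t)\|_{L^2(\OM_n)}=\|\sqrt{b_n}\,v_n^0\|_{L^2(\OM_n)}$ controls the weighted velocity, and together with a uniform $L^2$ bound for the Biot--Savart problem \eqref{v0n} (in terms of $\|\om_n\|_{L^\infty}$ and $|\gamma_n|$) it yields a uniform bound for the suitably extended $v_n$ in $L^\infty(\R_+;L^2(D))$. Passing to a subsequence, I extract $\om_n\rightharpoonup\om$ weak-$*$ in $L^\infty(\R_+\times D)$ and $v_n\rightharpoonup v$ weak-$*$ in $L^\infty(\R_+;L^2(D))$.

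The heart of the argument is the \emph{strong} convergence of the velocities. I would first gain time regularity from the transport equation \eqref{eq-vort}: since $b_nv_n\om_n$ is bounded in $L^2_{\loc}$, one has $\pd_t(b_n\om_n)$ bounded in $L^\infty(\R_+;H^{-1}_{\loc}(\OM))$, while $b_n\om_n$ is bounded in $L^2_{\loc}$ and hence precompact in $H^{-1}_{\loc}$ by Rellich; Arzel\`a--Ascoli then gives precompactness of $b_n\om_n$ in $C([0,T];H^{-1}_{\loc})$, so that $b_n\om_n(t)\rightharpoonup b\,\om(t)$ for every $t$ and, on each compact $K\subset\OM$ where $b_n\ge\theta_K$, the convergence $\om_n(t)\rightharpoonup\om(t)$. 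The decisive ingredient is then a \emph{continuity of the Biot--Savart law under lake convergence}: at fixed $t$, $v_n(t)$ solves the div--curl system \eqref{v0n} with data $(\om_n(t),\gamma_n)$, and I claim $\om_n(t)\rightharpoonup\om(t)$, $\gamma_n\to\gamma$ and the Hausdorff convergence $\OM_n\to\OM$ force $v_n(t)\to v(t)$ strongly in $L^2(D)$. I would establish this at $t=0$ first (a purely elliptic statement, free of the evolution) to get $v^0_n\to v^0$ and the convergence of initial energies; writing $b_nv_n=\na^\perp\psi_n$ with $\psi_n$ constant on each boundary component (the constants fixed by the circulations) and applying the div--curl lemma to the pair $(b_nv_n,v_n)$ — whose divergence vanishes and whose curl $b_n\om_n$ is precompact in $H^{-1}_{\loc}$ — identifies the limit, while convergence of the energy $\int b_n|v_n|^2\to\int b|v|^2$ upgrades the weak convergence of $\sqrt{b_n}v_n$ to strong. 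On interior compacta this transfers to $v_n$; the uniform $L^2$ bound provides the equi-integrability needed to control the shrinking near-boundary region, yielding strong $L^2(D)$ convergence at fixed $t$, and then, by dominated convergence, $v_n\to v$ in $L^2_{\loc}(\R_+;L^2(D))$.

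With strong convergence secured, I pass to the limit in the vorticity formulation \eqref{eq-vort} (preferable to the velocity formulation, whose nonlinearity carries the singular factor $\Phi/b$). From $v_n\to v$ in $L^2_{\loc}$, $b_n\to b$ in $L^2_{\loc}$ with $b_n$ bounded in $L^\infty$, one gets $b_nv_n\to bv$ in $L^1_{\loc}$; combined with $\om_n\rightharpoonup\om$ weak-$*$ in $L^\infty$, the nonlinear term $\int\!\int\na\f\cdot(b_nv_n)\,\om_n$ converges to $\int\!\int\na\f\cdot(bv)\,\om$, the linear and initial terms being immediate once a fixed test function $\f$ has its support inside $\OM_n$ for $n$ large (by Hausdorff convergence, as in \cite{GV_lac}). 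Hence $(v,\om)$ is a weak solution of the vorticity formulation on $(\OM,b)$ with data $(\om^0,\gamma)$. Since the limiting lake is smooth, Theorem \ref{theo-WP} yields uniqueness, so the limit does not depend on the subsequence and the whole sequence converges.

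The main obstacle is the strong convergence step, and within it the uniform $L^2(D)$ velocity estimate together with the Biot--Savart continuity when the depth degenerates near the boundary. Controlling $v_n$ in the vanishing-depth region uniformly in $n$ — especially in the transition regime $a_n\to a_0=0$, which is excluded from the analysis of \cite{BM} — and identifying the limiting div--curl pair across domains that converge only in the Hausdorff sense, is where the argument is genuinely delicate; the emphasis throughout is that only $L^2$, and no $W^{1,p}$, bounds on the velocity are required.
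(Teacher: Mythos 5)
Your overall architecture (uniform bounds, strong $L^2$ convergence of the velocity obtained from convergence of the energy, passage to the limit in the vorticity formulation, identification by uniqueness on the smooth limiting lake) is the same as the paper's, and your treatment of the nonlinear term and of the final uniqueness step is essentially what is done in Section \ref{sect 3}. The genuine gap is in the mechanism you propose for the key step, the strong convergence of the velocities. The div--curl lemma applied to $(b_nv_n,v_n)$ only yields convergence of $b_n|v_n|^2$ in the sense of distributions, i.e.\ against test functions compactly supported in $\OM$; to upgrade the weak convergence of $\sqrt{b_n}\,v_n$ to strong convergence in $L^2(D)$ you need convergence of the \emph{total} energy, and the only way to compute it is to integrate by parts against the stream function, $\int_{\OM_n}b_n|v_n|^2=-\int_{\OM_n}\psi_n\, b_n\om_n+\sum_k c_{n,k}\gamma_n^k$. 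This requires knowing that the weak limit of the stream functions lies in $H^1_0$ of the limiting domain (and is constant on each $\partial\Cc^k$ with the prescribed circulations). Hausdorff convergence of the domains alone does \emph{not} guarantee this: it is precisely the content of $\gamma$-convergence (Mosco convergence of $H^1_0(\OM_n)$ to $H^1_0(\OM)$), which can fail for Hausdorff-converging domains and holds here only because of Sver\'ak's theorem (Proposition \ref{prop9}), the number of connected components of the complement being uniformly bounded. This ingredient is entirely absent from your argument, and without it the limit of the div--curl pair is identified only up to a ``simili harmonic'' part and the energy identity cannot be closed. The paper's Lemmas \ref{lem-similifns} and \ref{lem-psi0} are exactly where this is done, component by component in the Biot--Savart decomposition of Proposition \ref{decomposition} (together with the invertibility of the circulation matrix, which uses the positive capacity of the islands).

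A second, related gap: you assert a uniform bound on the \emph{unweighted} velocity $v_n$ in $L^\infty(\R_+;L^2(D))$ and invoke ``equi-integrability'' of $|v_n|^2$ near the boundary. The energy estimate only controls $\sqrt{b_n}\,v_n$; when the depth vanishes at the shore (and a fortiori in the regime $a_n\to 0$ you want to cover) no uniform unweighted $L^2$ bound follows from it, and in any case a uniform $L^2$ bound never implies equi-integrability. The paper avoids this entirely by working with the weighted quantities $b_n^{-1/2}\na\psi_n^0$ and $b_n^{-1/2}\na\psi_n^k$ throughout and proving strong $L^2(D)$ convergence of those. Finally, a minor omission: to invoke the uniqueness statement of Theorem \ref{theo-WP} you must first upgrade the limit from an \emph{interior} weak solution (test functions compactly supported in $\OM$, which is all the limiting procedure gives) to a weak solution with test functions supported up to the boundary; this uses the Calder\'on--Zygmund regularity of the limiting solution via Propositions \ref{prop A4} and \ref{prop A3} and should not be skipped.
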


This theorem, whose proof will be given in Section \ref{sect 3}, links together various results on the lake equations, namely the flat bottom case (Euler equations \cite{yudo}), non-vanishing topography \cite{LOT} and vanishing topography \cite{BM}. Indeed, we allow the limit $a_{n}\to 0$ (passing from vanishing topography to  non vanishing topography), or the limit $\theta_{n}\to 0$ if $b_n=b+\theta_n$ where $b$ verifies (H2)-(H3) (passing from non vanishing topography to vanishing topography). The convergence of the solutions of the Euler equations when the domains converge in the Hausdorff topology is a recent result established by G\'erard-Varet and Lacave \cite{GV_lac}, based on the $\gamma$-convergence on open sets (a brief overview of this notion is given in Appendix \ref{app_gammaconv}). The present paper can be regarded as a natural extension of \cite{GV_lac} to the lake equations i.e. to the case of non-flat bottoms $b_n$ when we consider a weak notion of convergence of $b_n$.

The $\gamma$-convergence is an $H_{0}^1$ theory on the stream function (or an $L^2$ theory on the velocity). Bresch and M\'etivier have obtained estimates in $W^{2,p}$ for any $2\le p<+\infty$ (namely, the Calder\'on-Zygmund inequality) for the stream function, which is necessary for the uniqueness problem or to give a sense to the velocity formulation. For our interest in the sequential stability of the lake solutions, it turns out that we can treat our problem without having to derive uniform estimates in $W^{2,p}$, which appear hard to obtain. In fact, we will first prove the convergence of a subsequence of $v_n$ to $v$ and show that the limiting function $v$ is indeed a solution of the limiting lake equations. Since the Calder\'on-Zygmund inequality is verified for the solution of the limiting lake equations, the uniqueness yields that the whole sequence indeed converges to the unique solution in $(\OM,b)$.

More importantly, since the Calder\'on-Zygmund inequality is not used in the compactness argument, it follows that the existence of a weak solution to the lake equations with non-smooth domains or non-smooth topography can be obtained as a limit of solutions to the lake equations with smooth domains. Our second main theorem is concerned with non-smooth lakes which do not necessarily verify (H3).

\begin{theorem}\label{theo-non-smooth}
Let $(\Omega,b)$ be a lake satisfying (H1)-(H2). We assume that for every $1\le k\le N$, $\Cc^k$ has a positive Sobolev $H^1$ capacity. For any $\omega^0\in L^\infty(\Omega)$ and $\gamma\in \R^N$, there exists a global weak solution $(v,\omega)$ of the lake equations in the vorticity formulation on the lake $(\OM, b)$ with initial vorticity $\omega^0$ and initial circulation $\gamma \in \R^N$. This solution enjoys a Biot-Savart decomposition and its circulations are conserved in time. If we assume in addition that $b\in W^{1,\infty}_{\loc}(\Omega)$  then $(v,\omega)$ is also a global weak {\em interior} solution in the velocity formulation.
\end{theorem}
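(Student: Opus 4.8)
The plan is to obtain $(v,\om)$ as a limit of the smooth solutions provided by Theorem \ref{theo-WP}, following the same scheme that underlies Theorem \ref{theo-main}; the key point, stressed already in the introduction, is that this compactness argument never invokes the Calder\'on--Zygmund inequality nor Assumption (H3) on the limiting lake. First I would construct a sequence of smooth lakes $(\OM_n,b_n)$, with $(\partial\OM_n,b_n)\in C^3\times C^3(\overline{\OM_n})$, converging to $(\OM,b)$ in the sense of Definition \ref{def-ConvLake}. For the geometry I approximate $\widetilde\OM$ and each $\Cc^k$ by smooth simply connected sets in the Hausdorff distance (e.g. via level sets of mollified distance functions); here the positive $H^1$ capacity of each island is exactly what guarantees that the obstacles do not become invisible in the limit, so that the $N$-dimensional space of circulations and the Biot--Savart decomposition survive. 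For the depth, since (H3) is not required on the limit, I take the simplest non-vanishing approximations $b_n=\rho_{\e_n}\ast b+\theta_n$ (after a suitable extension of $b$), with $\e_n,\theta_n\to0$, so that each $(\OM_n,b_n)$ has non-vanishing topography ($a_{k,n}=0$) while $b_n$ stays uniformly bounded in $L^\infty$, bounded below on compacts for large $n$, and $b_n\to b$ in $L^1_{\loc}(\OM)$.

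Next I fix the initial data: I choose $\om_n^0$ (for instance the truncated restriction of $\om^0$ to $\OM_n$) so that $\|\om_n^0\|_{L^\infty(\OM_n)}\le M_0$ and $\om_n^0\rightharpoonup\om^0$ weakly in $L^1(D)$, I take circulations $\gamma_n=\gamma$, and I let $v_n^0$ be defined by \eqref{v0n}. Theorem \ref{theo-WP} then yields, for each $n$, a unique global weak solution $(v_n,\om_n)$. The uniform estimates are the conservation of the $L^\infty$ norm of $\om_n$ from Lemma \ref{lem-vorticity}, giving $\|\om_n\|_{L^\infty(\R_+\times\OM_n)}\le M_0$, together with the energy bound on $\sqrt{b_n}\,v_n$ in $L^\infty(\R_+;L^2)$; a uniform $L^2_{\loc}$ bound on $v_n$ and equicontinuity in time then come from the Biot--Savart reconstruction and the transport structure $\partial_t(b_n\om_n)+\div(b_n v_n\om_n)=0$, exactly as in the proof of Theorem \ref{theo-main}. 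This lets me extract a subsequence with $v_n\to v$ strongly in $L^2_{\loc}(\R_+;L^2(D))$ and $\om_n\rightharpoonup\om$ weak-$*$ in $L^\infty(\R_+\times D)$.

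It then remains to pass to the limit in the weak vorticity formulation \eqref{eq-vort} and to identify $(v,\om)$. The linear and initial terms pass to the limit using $b_n\to b$ in $L^1_{\loc}$ with $\om_n\rightharpoonup\om$ (so $b_n\om_n\to b\om$ and $b_n\om_n^0\to b\om^0$ in the required sense), while the nonlinear term $\int\na\f\cdot v_n b_n\om_n$ is handled by combining the strong $L^2_{\loc}$ convergence of $v_n$ with the weak-$*$ convergence of $\om_n$ and the convergence of $b_n$. In the same passage I recover the regularity (i), the weak incompressibility and tangency (ii) in the sense of \eqref{imperm2}, and the relation $\curl v=b\om$ (iii). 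The Biot--Savart decomposition and the conservation in time of the circulations $\gamma^k(v(t))=\gamma^k$ are inherited from the smooth lakes via Kelvin's theorem (Theorem \ref{theo-WP}) together with the continuity of the generalized circulation $\gamma^k(\cdot)$ under $L^2_{\loc}$ convergence of the velocities. Finally, when $b\in W^{1,\infty}_{\loc}(\OM)$, the interior velocity formulation \eqref{eq-vel} (with $\Phi\in C^\infty_c([0,\infty)\times\OM)$) is meaningful under the regularity (i) and follows from the interior vorticity formulation together with (iii), as established in Appendix \ref{sect equiv defi}.

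The main obstacle is the strong $L^2_{\loc}$ compactness of the velocities $v_n$ and the stability of the circulation/Biot--Savart structure as the geometry degenerates. This is precisely where the $\gamma$-convergence of the elliptic stream-function problems on $\OM_n\to\OM$ (Appendix \ref{app_gammaconv}) and the positive $H^1$ capacity of the islands are used: the former supplies the convergence of the velocity reconstruction without any $W^{1,p}$ bound, and the latter prevents the holes from collapsing in the limit, keeping the $N$ circulation constraints non-degenerate. Because (H3) fails on $(\OM,b)$, uniqueness is unavailable, so only a subsequence converges and one obtains existence rather than the full sequential stability of Theorem \ref{theo-main}.
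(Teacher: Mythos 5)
Your overall strategy is the same as the paper's: approximate $(\OM,b)$ by smooth lakes with non-vanishing topography ($b_n=\rho_n\ast b+\tfrac1n$), solve by Theorem \ref{theo-WP}, and run the compactness machinery of Section \ref{sect 3}, which never uses the Calder\'on--Zygmund inequality or (H3) on the limit. However, two steps of your write-up have genuine gaps.

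First, the theorem claims a global weak solution of the vorticity formulation, i.e.\ \eqref{eq-vort} must hold for test functions $\f\in C^\infty_c([0,\infty)\times\overline{\OM})$, not only for interior ones. Your limit passage only treats test functions compactly supported in $\OM$ (those are eventually supported in $\OM_n$ by Hausdorff convergence); for a $\f$ touching $\partial\OM$ you cannot use it as a test function on $\OM_n$ unless $\OM_n\subset\OM$, and its restriction to $\OM_n$ is in general not constant on $\partial\OM_n$. The paper handles this by constructing an \emph{increasing} exhaustion $\OM_n\subset\OM_{n+1}\subset\OM$ (via Riemann maps of the simply connected islands, cf.\ \cite[Proposition 1]{GV_lac}) and by invoking Remark \ref{rem vort}, which shows that for smooth lakes the vorticity formulation holds for all of $C^\infty_c([0,\infty)\times\overline{\OM_n})$ without the constancy condition on the boundary. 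Your level-set/mollification construction of $\OM_n$ gives Hausdorff convergence but not monotonicity, so this step is missing.

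Second, your final claim that the interior velocity formulation ``follows from the interior vorticity formulation together with (iii), as established in Appendix \ref{sect equiv defi}'' is not supported: the only vorticity-to-velocity implication proved there for non-smooth lakes is Proposition \ref{prop A2}, which requires $N=0$ (for $N\ge1$ a compactly supported divergence-free $\Phi$ has a stream function that is constant but possibly nonzero near the inner boundaries, so it is not an admissible interior vorticity test function). The equivalence for multiply connected domains (Proposition \ref{prop A3}) uses the Calder\'on--Zygmund regularity of smooth lakes, which is unavailable here. The paper instead obtains the interior velocity formulation by passing to the limit \emph{directly} in \eqref{eq-vel} written for the smooth approximations, using the strong $L^2_{\loc}$ convergence of $v_n$ and the improved ($W^{1,\infty}_{\loc}$) convergence of $b_n$ when $b\in W^{1,\infty}_{\loc}(\OM)$; you should do the same. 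The rest of your argument (uniform vorticity and energy bounds, $\gamma$-convergence of the stream-function problems, the role of the positive capacity of the islands in keeping the circulation matrix invertible, conservation of circulations, and the loss of uniqueness forcing a subsequence extraction) matches the paper.
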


Let us mention that we do not assume any regularity of $\partial \Omega$; for instance, $\partial \OM$ can be the Koch snowflake.  To obtain solutions for the vorticity formulation, we do not need any regularity on $b$ either; it might not even be continuous. But even in the case where we assume the bottom to be locally lipschitz, choosing $b_n:=b+\frac1n$ we can consider a zero slope: $b(x)=e^{-1/d(x)}$ or non constant: $b(x)=d(x)^{a(x)}$; our theorem states that $(v,\omega)$ is a solution of the vorticity formulation and an {\it interior} solution of the velocity formulation. Such a result might appear surprising, because the known existence result requires that the lake domain is smooth, namely $(\partial \OM,b)\in C^3\times C^3(\overline{\Omega})$ and (H3).

The Sobolev $H^1$ capacity of a compact set $E \subset \R^2$ is defined by 
$$ {\rm cap}(E) \: := \: \inf \{ \| v \|^2_{H^1(\R^2)}, \: v \ge 1 \: \mbox{ a.e.    in a neighborhood of } E\},  
$$
with the convention that ${\rm cap}(E)= +\infty$ when the set in the r.h.s. is empty. We refer to  \cite{henrot} for an extensive study of this notion (the basic properties are listed in \cite[Appendix A]{GV_lac}, in particular we recall that a material point has a zero capacity whereas the capacity of a Jordan arc is positive).

Apparently, in such non-smooth lake domains, the Calder\'on-Zygmund inequality is no longer valid, and hence the well-posedness is delicate.  For existence, our construction of the solution follows by approximating the non-smooth lake by an increasing sequence of smooth domains in which the solutions are given from Theorem \ref{theo-WP}.

Finally, we leave out the question of uniqueness in the case of non-smooth lakes. We refer to \cite{lac-uni} for a uniqueness result for the 2D Euler equations in simply-connected domains with corners. In \cite{lac-uni} the velocity is shown in general not to belong to $W^{1,p}$ for all $p$ (precisely, if there is a corner of angle $\alpha>\pi$, then the velocity is no longer bounded in $L^p\cap W^{1,q}$, $p>p_\alpha$, $q>q_\alpha$ with $p_\alpha\to 4$ and $q_\alpha\to 4/3$ as $\alpha\to 2\pi$).

\section{Well-posedness of the lake equations for smooth lake}\label{sect WP}

In this section, we sketch the proof of existence of the lake equations in a non-simply connected domain (Theorem \ref{theo-WP}). The proof can be outlined as follows: 
\begin{itemize}
 \item we first prove existence of a global weak interior solution in the vorticity formulation. The proof follows by adding an artificial viscosity (as was done in \cite{LOT1}) and obtaining compactness for the vanishing viscosity problem (Section \ref{sect 2.2});
 \item as the lake is smooth, we then use the Calder\'on-Zygmund inequality established in \cite{BM}, which in turn implies that for arbitrary $r\ge 1$, $\omega \in C(\R_+,L^r(\Omega))$, $v \in C(\R_+,W^{1,r}(\Omega))$, and $v\cdot \nu =0$ on $\partial\Omega$;
 \item thanks to the regularity close to the boundary, we can show  by a continuity argument that \eqref{eq-vort} is indeed verified for test functions supported up to the boundary (Proposition \ref{prop A4}). The existence of a global weak solution in the vorticity formulation (with conserved circulations) is then established. The solution also verifies the velocity formulation due to the equivalence of the two formulations (Proposition \ref{prop A3}). 
 \item finally, uniqueness of a global weak solution is shown in Section \ref{sec-uniqueness} by following the celebrated method of Yudovich.
\end{itemize}

Essentially, this outline of the proof was introduced by Yudovich in his study of two-dimensional Euler equations \cite{yudo}, and it was used in \cite{LOT,BM} in the case of the lake equations. We shall provide the proof with more details as it will be crucial in our convergence proof later on.

Throughout this section, we fix a smooth lake $(\Omega,b)$ namely:
\begin{equation}\label{smooth lake}
(\Omega,b) \text{ satisfying Assumptions (H1)-(H3) (see Section \ref{sec-Intro}) and }  (\partial \OM,b)\in C^3\times C^3(\overline{\Omega}). 
\end{equation}
We allow the lake to have either vanishing or non-vanishing topography. We shall begin the section by deriving the Biot-Savart law. We then obtain the well-posedness of the lake equations \eqref{lake-eqs} in the sense of Definitions  \ref{defi-weak-velocity} and \ref{defi-weak-sol}.

\subsection{Auxiliary elliptic problems} \label{section 2.1}
Let us introduce the function space 
\[ X:= \Bigl\{ f\in H^1_0(\Omega)~:~ b^{-1/2} \na f \in L^2(\Omega) \Bigl\}.\]
We will sometimes write the function space as $X_b$ instead of $X$ to emphasize the dependence on $b$.
Clearly,  $(X,\|\cdot\|_X)$ is a Hilbert space with inner product $\langle f,g\rangle_X : = \langle b^{-1/2} \nabla f, b^{-1/2} \nabla g\rangle_{L^2}$ and norm $\|f\|_{X} := \langle f,f\rangle_X^{1/2}$.  
Our first remark is concerned with the density of $C^\infty_c(\OM)$ in $X$.
\begin{lemma}\label{lem density} Let $(\Omega,b)$ be a smooth lake in the sense of \eqref{smooth lake}. Then 
$C^\infty_c(\OM)$ is dense in $X$ with respect to the norm $\|\cdot \|_X$.
\end{lemma}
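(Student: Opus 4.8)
The plan is to approximate a given $f \in X$ in two stages: first by functions that vanish in a neighborhood of $\pd\OM$, and then by smoothing those through mollification. The only real difficulty is that $b^{-1}$ blows up at the boundary, so the error produced when one cuts off near $\pd\OM$ cannot be controlled by the bare $H^1$ structure. The heart of the argument is therefore a \emph{weighted Hardy inequality}, which I would establish first.

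The claim is that there exist $\d_0>0$ and $C>0$ such that every $f\in X$ satisfies
\begin{equation*}
\int_{\{0 < d < \d_0\}} b^{-1}\frac{|f|^2}{d^2}\, dx \le C\int_{\{0<d<\d_0\}} b^{-1}|\na f|^2\, dx \le C\|f\|_X^2.
\end{equation*}
Because $\pd\OM\in C^3$ and, by (H1), has finitely many components, a tubular neighborhood $\{0<d<\d_0\}$ carries boundary-normal coordinates $(s,d)$ with Jacobian bounded above and below; there $\pd_d f$ is a component of $\na f$, so $|\pd_d f|\le|\na f|$, and by (H3) one has $b^{-1}=c^{-1}d^{-a_k}$ with $c$ trapped between two positive constants. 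The inequality then reduces, for a.e.\ fixed $s$, to the one-dimensional estimate
\begin{equation*}
\int_0^{\d_0} |g(t)|^2 t^{-a_k - 2}\, dt \le \frac{4}{(a_k+1)^2}\int_0^{\d_0}|g'(t)|^2 t^{-a_k}\, dt, \qquad g(0)=0,
\end{equation*}
which holds for every $a_k\ge 0$: integrate by parts against the antiderivative $t^{-a_k-1}/(-a_k-1)$ of $t^{-a_k-2}$, observe that the boundary term at $t=0$ vanishes while the one at $t=\d_0$ has a favorable sign, and close the estimate by Cauchy--Schwarz. The condition $g(0)=0$ is furnished by $f\in H^1_0(\OM)$, which forces zero trace on a.e.\ normal line (Fubini). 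Summing the finitely many boundary pieces and handling the interior trivially yields the displayed inequality.

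Second, I would carry out the truncation. Fix $\chi_\e\in C^\infty(\OM)$ with $\chi_\e=1$ on $\{d\ge 2\e\}$, $\chi_\e=0$ on $\{d\le\e\}$, and $|\na\chi_\e|\le C/\e$ supported in $\{\e\le d\le 2\e\}$. Writing $\na((1-\chi_\e)f)=(1-\chi_\e)\na f - f\na\chi_\e$, the first contribution to $\|f-\chi_\e f\|_X^2$ tends to $0$ by dominated convergence, since $b^{-1}|\na f|^2\in L^1(\OM)$ and $1-\chi_\e\to 0$ pointwise. For the second, on $\{d\le 2\e\}$ one has $\e^{-2}\le 4d^{-2}$, whence
\begin{equation*}
\frac{1}{\e^2}\int_{\e \le d \le 2\e} b^{-1}|f|^2\, dx \le 4\int_{\e \le d \le 2\e} b^{-1}\frac{|f|^2}{d^2}\, dx,
\end{equation*}
and the right-hand side tends to $0$ as $\e\to 0$ because its integrand lies in $L^1(\OM)$ by the Hardy inequality. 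Thus $\chi_\e f\to f$ in $X$.

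Finally, each $\chi_\e f$ is supported in the compact set $\{d\ge\e\}\subset\OM$, where $b\ge\theta_K>0$ by (H2), so there the $X$-norm is dominated by the $H^1$-norm; mollifying at scale $\eta\ll\e$ gives functions in $C^\infty_c(\OM)$ converging to $\chi_\e f$ in $H^1$, hence in $X$, and a diagonal choice $\eta=\eta(\e)$ finishes the density. I expect the Hardy inequality to be the main obstacle, specifically transporting the clean one-dimensional estimate through the curved normal coordinates and justifying the a.e.\ vanishing of the trace along normal lines; once this weighted estimate is secured, everything else is routine.
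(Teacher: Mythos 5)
Your proposal is correct and follows the same top-level strategy as the paper: a weighted Hardy inequality near the boundary, a cutoff $\chi_\e$ supported away from $\pd\OM$ whose error term $f\na\chi_\e$ is controlled by that inequality, and a final mollification on the compact set where $b$ is bounded below. The genuine difference lies in how the Hardy inequality is obtained and then exploited. The paper proves a \emph{localized} version, $\Vert b^{-1/2}(f/d)\Vert_{L^2(\pd\OM_R)}\lesssim \Vert b^{-1/2}\na f\Vert_{L^2(\pd\OM_R)}$ uniformly in $R$, by a dyadic decomposition of the collar into shells $\{R\le 2^k d\le 2R\}$ combined with the elementary estimate $\int_{R\le d\le 2R}|f|^2\lesssim R^2\int_{d\le 2R}|\na f|^2$, summing the geometric weights $(R2^{-k})^{-a_j}$ back into $b^{-1}$; the uniformity in $R$ is then what lets it choose $R_\e$ with $\int_{\pd\OM_{R_\e}}b^{-1}|\na f|^2\le\e^2$ and absorb both error terms at once. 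You instead prove a single global inequality on a fixed collar via boundary-normal coordinates and the sharp one-dimensional weighted Hardy inequality $\int_0^{\d_0}|g|^2t^{-a-2}\,dt\le \tfrac{4}{(a+1)^2}\int_0^{\d_0}|g'|^2t^{-a}\,dt$, and you compensate for the lack of localization by invoking absolute continuity of the integral to make $\int_{\e\le d\le 2\e}b^{-1}|f|^2/d^2$ small. Both routes are sound: yours yields an explicit constant and is the classical argument, at the price of justifying the change to normal coordinates (available from $\pd\OM\in C^3$), the a.e.\ vanishing trace along normal lines, and the vanishing of the boundary term at $t=0$ in the integration by parts (which requires first noting that $|g(t)|^2 t^{-a-1}\lesssim \int_0^t |g'|^2 s^{-a}\,ds\to 0$); the paper's dyadic argument is coordinate-free and avoids these trace issues entirely.
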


The proof relies on a variant of the Hardy's inequality. As it was noted in the introduction, we can consider that $d$ is the distance to the boundary in \eqref{BForm}. With the notation: 
\begin{equation}\label{Neighborhood} \partial\Omega_R:=\{x\in\Omega:0\le d(x)\le R\},
\end{equation}
we establish the following Hardy type inequality:

\begin{lemma}\label{HardyLem}
Let $(\Omega,b)$ be a smooth lake in the sense of \eqref{smooth lake}. Then the following inequality holds uniformly for every $f\in H^1_0(\Omega)$ and any positive $R$:
\begin{equation}\label{Hardy}
\Vert b^{-1/2}(f/d)\Vert_{L^2(\partial\Omega_R)} \lesssim \Vert b^{-1/2}\nabla f\Vert_{L^2(\partial\Omega_R)}.
\end{equation}
\end{lemma}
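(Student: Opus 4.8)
The plan is to prove the weighted Hardy inequality \eqref{Hardy} by localizing to a neighborhood of each boundary component, flattening the boundary, and reducing to a one-dimensional Hardy inequality in the normal direction, where the weight $b^{-1/2} = c^{-1/2} d^{-a_k/2}$ can be handled explicitly using the power-law structure \eqref{BForm} from (H3). First I would fix a boundary piece (say $\partial\widetilde\OM$ or some $\partial\Cc^k$) and work inside the neighborhood $\mathcal{O}^k \cap \OM$ where \eqref{BForm} holds and $d(x) = \mathrm{dist}(x,\partial\OM)$. Since it suffices to prove \eqref{Hardy} near the boundary (for the region where $d$ is bounded below, $b^{-1/2}$ is bounded and the inequality is trivially controlled by the full gradient after applying the standard Hardy inequality on a tubular neighborhood), I would introduce boundary-adapted coordinates $(s,d)$ where $s$ parametrizes the (smooth, $C^3$) boundary and $d$ is the normal distance, and write $f=0$ on $\{d=0\}$ since $f \in H^1_0(\Omega)$.

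In these coordinates the measure is $dx = J(s,d)\,ds\,dd$ with $J$ bounded above and below (by $C^3$-regularity of $\partial\Omega$ for $d$ small), and $b \sim c(s,d)\, d^{a_k}$ with $c$ bounded between positive constants. The key step is then the one-dimensional estimate: for fixed $s$ and a function $g(d)$ with $g(0)=0$,
\begin{equation}\label{1dHardy}
\int_0^R \frac{|g(d)|^2}{d^2}\, d^{-a_k}\, dd \lesssim \int_0^R |g'(d)|^2\, d^{-a_k}\, dd,
\end{equation}
which is exactly the weighted Hardy inequality with weight exponent $-a_k$. This holds with a constant independent of $R$ because the power $2+a_k$ appearing in the singular weight and the power $a_k$ in the gradient weight differ by $2$, which is precisely the critical gap making the weighted Hardy inequality valid for all $a_k \ge 0$ (the exponent $-a_k$ never hits the forbidden value $1$ since $-a_k \le 0 < 1$). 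I would prove \eqref{1dHardy} by the standard integration-by-parts argument: write $g(d) = \int_0^d g'(\sigma)\,d\sigma$ and either apply the classical weighted Hardy inequality directly or integrate $\frac{d}{dd}(d^{1-a_k})|g|^2$ and use Cauchy--Schwarz, taking care that the boundary term at $d=0$ vanishes because $g(0)=0$ and $1-a_k$ is controlled. Integrating \eqref{1dHardy} in $s$ against $J\,ds$ and transferring $c$ in and out (harmless since $c$ is bounded above and below) then yields \eqref{Hardy} on $\partial\Omega_R$ restricted to one boundary piece; summing over the finitely many pieces completes the argument.

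The main obstacle I anticipate is twofold. First, the case $a_k = 1$ (and more generally ensuring uniformity in the exponent) requires care: when $-a_k = -1$, i.e. $a_k=1$, the weight exponent in \eqref{1dHardy} is $-1$, and the naive integration by parts produces a logarithmically divergent boundary factor $d^{1-a_k}=d^0$; one must check that the constant in \eqref{1dHardy} stays bounded as $a_k \to 1$ and at $a_k=1$ exactly, which it does because the critical exponent for the one-dimensional Hardy inequality $\int d^\beta |g/d|^2 \lesssim \int d^\beta |g'|^2$ is $\beta = 1$ (failure) and here $\beta = -a_k \le 0$, comfortably subcritical. Second, the change of variables and the claim that $|\nabla f|^2$ in \eqref{Hardy} dominates the normal-derivative contribution $|\partial_d f|^2$ must be justified: the gradient in curvilinear coordinates contains cross terms, but since we only need a \emph{lower} bound on $\|b^{-1/2}\nabla f\|$ and the Jacobian and metric coefficients are uniformly comparable to the Euclidean ones for $d$ small, the normal derivative $\partial_d f$ is controlled by $|\nabla f|$ up to a bounded factor, so \eqref{1dHardy} applied to $g = f(s,\cdot)$ transfers correctly. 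I would organize the write-up so that the one-dimensional inequality \eqref{1dHardy} is isolated as the analytic core and the geometric reduction is treated as routine bookkeeping justified by the $C^3$ regularity of the lake.
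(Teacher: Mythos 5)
Your proof is correct, but it takes a genuinely different route from the paper's. The paper does not flatten the boundary: it first establishes the elementary shell estimate \eqref{Hardy2}, namely $\int_{R\le d\le 2R}|f|^2\,dx\lesssim R^2\int_{d\le 2R}|\nabla f|^2\,dx$ (fundamental theorem of calculus plus H\"older for smooth compactly supported functions, then density), and then, for $f$ supported near a boundary piece with $a_j>0$, decomposes $\partial\Omega_R$ into the dyadic shells $\{R\le 2^k d\le 2R\}$, uses $b^{-1}d^{-2}\sim (R2^{-k})^{-(a_j+2)}$ on each shell, applies \eqref{Hardy2} shell by shell, and sums the resulting geometric series, whose sum is again controlled by $b^{-1}$. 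Your argument instead passes to tubular coordinates $(s,d)$ and invokes the one-dimensional weighted Hardy inequality $\int_0^R|g|^2t^{-a_k-2}\,dt\le\tfrac{4}{(1+a_k)^2}\int_0^R|g'|^2t^{-a_k}\,dt$ for $g(0)=0$, valid uniformly in $R$ and in $a_k\ge0$ because the weight exponent $-a_k$ stays strictly below the critical value $1$. Both proofs exploit exactly the same two facts --- the vanishing trace of $f$ and the power law $b\sim d^{a_k}$ from \eqref{BForm} --- and both yield a constant uniform in $R$; the paper's dyadic argument buys freedom from coordinate changes (no Jacobian bookkeeping), while yours isolates the sharp one-dimensional mechanism and makes the subcriticality of the exponent explicit. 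Two small remarks. First, your worry about $a_k=1$ is a red herring: the antiderivative arising in the integration by parts for $\int|g|^2d^{-a_k-2}$ is $d^{-a_k-1}$, whose exponent never vanishes for $a_k\ge 0$; the borderline factor $d^{1-a_k}$ you mention would only appear if you were estimating $\int|g|^2d^{-a_k}$, which is not the quantity at hand. Second, do spell out the interior region $\{d\ge\delta_0\}\cap\partial\Omega_R$: there $b^{-1}d^{-2}$ is bounded, but you still need $\int_{\{\delta_0\le d\le R\}}|f|^2\lesssim\int_{\{d\le R\}}|\nabla f|^2$ with the gradient integrated only over $\partial\Omega_R$ (not all of $\Omega$); this follows from the same normal-line integration, or from summing \eqref{Hardy2} dyadically, since the normal ray from any such point down to $\partial\Omega$ stays inside $\{d\le R\}$.
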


Here in Lemma \ref{HardyLem} and throughout the paper, the notation $g\lesssim h$ is used to mean a uniform bound $g \le C h$, for some universal constant $C$ that is independent of the underlying parameter (in \eqref{Hardy}, small $R>0$ and $f$).

\begin{proof}[Proof of Lemma \ref{HardyLem}]
We start with the following claim:  for any $f\in H^1_0(\Omega)$ and any positive $R$, there holds that
\begin{equation}\label{Hardy2}
\int_{R\le d(x)\le 2R} \vert f(x)\vert^2\, dx\lesssim R^2 \int_{ d(x)\le 2R}\vert\nabla f(x)\vert^2\, dx . 
\end{equation}
The claim follows directly from the fundamental theorem of Calculus and the standard H\"older's inequality at least for smooth compactly supported functions. By density, it extends to $H^1_0(\Omega)$.

Next, by \eqref{Hardy2}, the lemma follows easily for functions $f\in H_0^1(\Omega)$ whose support is away from the set $\bigcup_{k:a_k>0}\mathcal{O}_k$. It suffices to consider functions $f$ that are supported in the set $\mathcal{O}_j$ for $a_j>0$. Again by \eqref{Hardy2}, we can write 
\begin{equation*}
\begin{split}
\Big \Vert b^{-1/2} (f/d) \Big \Vert_{L^2(\partial\Omega_R)}^2\quad &=\quad \sum_{k\in\mathbb{N}^*}\int_{R\le 2^kd(x)\le 2R}\left(\frac{f(x)}{d(x)}\right)^2\frac{dx}{b(x)}\\
\quad&\lesssim \quad \sum_{k\in\mathbb{N}^*}(R2^{-k})^{-(a_j+2)}\int_{R\le 2^kd(x)\le 2R} \vert f(x)\vert^2\, dx\\
\quad&\lesssim \quad  \sum_{k\in\mathbb{N}^*}(R2^{-k})^{-a_j}\int_{2^kd(x)\le 2R} \vert \nabla f(x)\vert^2\, dx\\
\quad&\lesssim \quad \int_{\partial\Omega_R}\left(\sum_{k\in\mathbb{N}^*:\,\, 2^kd(x)\le 2R} (R2^{-k})^{-a_j}\right)\vert \nabla f(x)\vert^2\, dx.\\
\end{split}
\end{equation*}
Since the summation  in the parentheses in the last line above is bounded by $b^{-1}$, the integral on the righthand side is bounded by $\Vert b^{-1/2}\nabla f\Vert_{L^2(\partial\Omega_R)}^2$. The lemma is thus proved. \end{proof}

\begin{proof}[Proof of Lemma \ref{lem density}] Fix $\varepsilon>0$ and $f \in X$. It suffices to construct a cut-off function $\chi\in C^1_c(\Omega)$ such that
\begin{equation}\label{CutOff}
\Vert (1-\chi)f\Vert_{X}\le\varepsilon.
\end{equation}
The lemma would then follow simply by approximating the compactly supported function $\chi f$ with its $C^\infty_c$ mollifier functions. 

Now since $f\in X$, there exists a positive $R_\epsilon$ such that
\begin{equation}\label{CutOff2}
\int_{\partial\Omega_{R_\epsilon}}\vert\nabla f(x)\vert^2\frac{dx}{b(x)}\le\varepsilon^2.
\end{equation}
Let us introduce a cut-off function $\eta\in C^\infty(\mathbb{R}_+)$ such that $0\le\eta\le 1$, $\eta(z)\equiv1$ if $z\ge 1$ and $\eta(z)\equiv 0$ if $z\le 1/2$ and define
\begin{equation*}
\chi(x)=\eta(d(x)/(R_\epsilon)).
\end{equation*}
Clearly, $\chi\in C^1_c(\Omega)$. In addition, we note that $\nabla[(1-\chi)f]=(1-\chi)\nabla f-f\nabla\chi$. It then follows by \eqref{CutOff2}  that
\begin{equation*}
\int_{\Omega}(1-\chi(x))^2\vert\nabla f(x)\vert^2\frac{dx}{b(x)}
\le \int_{\partial\Omega_{R_\epsilon}}\vert\nabla f(x)\vert^2\frac{dx}{b(x)}
\le \varepsilon^2.
\end{equation*}
Meanwhile using the fact that
\begin{equation*}
\vert f\nabla\chi\vert=\vert  R_\epsilon^{-1} f\eta^\prime(d(x)/R_\epsilon) \nabla d(x)\vert\le |(f/d)(x)| \Vert\eta^\prime\Vert_{L^\infty}
\end{equation*}
and Lemma \ref{HardyLem}, we obtain
\begin{equation*}
\begin{split}
\int_{\Omega}\vert f(x)\nabla\chi(x)\vert^2\frac{dx}{b(x)}\le \Vert\eta^\prime\Vert_{L^\infty}\int_{\partial\Omega_{R_\epsilon}}\frac{\vert f(x)\vert^2}{d(x)^2}\frac{dx}{b(x)}\lesssim \int_{\partial\Omega_{R_\epsilon}} \vert\nabla f(x)\vert^2\frac{dx}{b(x)}\lesssim\varepsilon^2.
\end{split}
\end{equation*}
This yields \eqref{CutOff} which completes the proof of the lemma.\end{proof}

\medskip

Next, we consider the following auxiliary elliptic problem
\begin{equation}\label{SEPb}
\div \Bigl[ \frac1b \na \p \Bigl] = f \text{ in } \OM, \quad  \text{with}\quad \p_{\vert_{\partial \OM}}  = 0.
\end{equation}
\begin{proposition}\label{prop elliptic}Let $(\Omega,b)$ be a smooth lake in the sense of \eqref{smooth lake}.
Given $f\in L^2(\OM)$, there exists a unique (distributional) solution $\psi \in X$ of the problem \eqref{SEPb}. 
\end{proposition}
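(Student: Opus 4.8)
The plan is to recast \eqref{SEPb} as a variational problem in the Hilbert space $X$ and to invoke the Riesz representation theorem. Multiplying the equation by a test function $\f$ and integrating by parts formally, the natural weak formulation is to find $\p \in X$ such that
\begin{equation*}
\int_\OM \frac1b \na \p \cdot \na \f \, dx = -\int_\OM f \f \, dx \qquad \text{for all } \f \in X.
\end{equation*}
The crucial observation is that the left-hand side is precisely the inner product $\langle \p, \f\rangle_X$ of the Hilbert space $X$, so the underlying bilinear form is automatically symmetric, bounded and coercive (indeed $\langle \f,\f\rangle_X = \|\f\|_X^2$). It therefore suffices to show that the right-hand side defines a bounded linear functional on $X$; existence and uniqueness of $\p$ then follow immediately from the Riesz representation theorem.

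To check boundedness of $L(\f) := \int_\OM f\f\, dx$, I would first use Assumption (H2), which gives $b \le M$ and hence $b^{-1/2} \ge M^{-1/2}$, to obtain
\begin{equation*}
\|\na \f\|_{L^2(\OM)} \le M^{1/2}\, \|b^{-1/2}\na\f\|_{L^2(\OM)} = M^{1/2}\, \|\f\|_X.
\end{equation*}
Since $\f \in X \subset H^1_0(\OM)$ and $\OM$ is bounded, the standard Poincar\'e inequality gives $\|\f\|_{L^2(\OM)} \lesssim \|\na\f\|_{L^2(\OM)}$. Combining these bounds with the Cauchy--Schwarz inequality yields
\begin{equation*}
|L(\f)| \le \|f\|_{L^2(\OM)}\, \|\f\|_{L^2(\OM)} \lesssim \|f\|_{L^2(\OM)}\, \|\f\|_X,
\end{equation*}
so $L$ is indeed continuous on $X$.

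Consequently, the Riesz representation theorem provides a unique $\p \in X$ satisfying the weak formulation for every $\f \in X$. To confirm that $\p$ solves \eqref{SEPb} in the distributional sense, I would restrict the test functions to $\f \in C_c^\infty(\OM) \subset X$; the identity then reads $\int_\OM \tfrac1b \na\p\cdot\na\f\, dx = -\int_\OM f\f\, dx$, which is exactly the distributional form of $\div[\tfrac1b \na\p]=f$, while the homogeneous boundary condition is encoded in the membership $\p \in X \subset H^1_0(\OM)$. Conversely, the density of $C_c^\infty(\OM)$ in $X$ (Lemma \ref{lem density}) guarantees that any distributional solution lying in $X$ automatically satisfies the full variational identity, so the two notions of solution coincide and uniqueness in $X$ is genuine.

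The argument is essentially routine once the right functional framework is fixed; the only genuinely delicate points — that $X$ is complete and that $\|\cdot\|_X$ is a bona fide norm despite the possible degeneracy of $b$ at $\partial\OM$ — are already granted in the text, and the density statement of Lemma \ref{lem density} is what makes the passage between the variational and the distributional formulations clean. I emphasize that the weight $b^{-1/2}$ only \emph{helps} coercivity, since it makes the $X$-norm stronger than the $H^1_0$-norm; thus the vanishing of the topography at the shore poses no obstacle here, and the boundedness of $b$ from (H2) is the single ingredient actually used to control the source term.
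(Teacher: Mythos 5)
Your proof is correct and follows essentially the same route as the paper: both recast \eqref{SEPb} as a variational problem in the weighted Hilbert space $X$, control the source term via the Poincar\'e inequality together with the bound $b\le M$ from (H2), and invoke the density of $C_c^\infty(\Omega)$ in $X$ (Lemma \ref{lem density}) to pass between the variational and distributional formulations and hence obtain uniqueness. The only immaterial difference is that you extract existence from the Riesz representation theorem, whereas the paper minimizes the functional $E(\psi)=\int_\Omega\bigl(\tfrac{1}{2b}|\nabla\psi|^2+f\psi\bigr)\,dx$ by the direct method; since the bilinear form is exactly the $X$-inner product, the two arguments are interchangeable.
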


\begin{proof} Let us introduce the functional
\begin{equation*}
E(\p) : = \int_{\OM} \Bigl( \frac1{2b} |\na \p|^2 + f\p \Bigl)\, dx.
\end{equation*}
Since $f \in L^2$, the functional $E(\cdot)$ is well-defined on $X$. Let $\p_k\in X $ be a minimizing sequence. Thanks to the Poincar\'e inequality and the fact that $b$ is bounded,  $\psi_k$ is uniformly bounded in $X$. Up to a subsequence, we assume that $\p_k\rightharpoonup \p$ weakly in $X$. By the lower semi-continuity of the norm, we obtain that
\[ E(\psi) = E(\liminf_{k\to \infty} \p_k) \leq \liminf_{k\to \infty} E(\p_k).\]
Hence, $\p\in X$ is indeed a minimizer. In addition, by minimization, the first variation of $E(\psi)$ reads 
\begin{equation}\label{SEPb-dist}
  \int_{\OM}  \Bigl(\frac1b \nabla\f \cdot \na \p+ \f f\Bigl) \, dx =0, \qquad \ \forall \f\in C^\infty_c(\OM),
\end{equation}
which shows that $\psi$ is a solution of \eqref{SEPb}. We recall that the Dirichlet boundary condition is encoded in the function space $X$. For the uniqueness, let us assume that $\p\in X$ is a solution with $f\equiv 0$. Then, \eqref{SEPb-dist} simply reads $\langle \varphi, \psi\rangle_X = 0$, for arbitrary $\f \in C^\infty_c(\OM)$. It follows by density (see Lemma \ref{lem density}) that $\|\psi\|_X =0$ and so $\psi =0$. This proves the uniqueness as claimed. 
\end{proof}

\begin{definition}\label{defi simili}
We say that $\F$ is a simili harmonic function if 
\[\F\in H^1_0(\widetilde \OM), \quad  b^{-1/2} \nabla \F \in L^2(\OM),\]
where $\widetilde \OM$ is as introduced in (H1), so that $\Phi$ solves the problem
\[ \div \Bigl[ \frac1b \na \F\Bigl] = 0 \text{  in  }\Omega, \quad \text{ and } \pd_{\t} \F =0 \text{ on } \pd \OM.\]
We denote by $\mathcal{SH}$ the space of simili harmonic functions.
\end{definition}

We remark that since a simili harmonic function $\F$ belongs to $H^1(\Omega)$, we can define its trace at the boundary, and so $\pd_{\t} \F =0$ should be understood as its trace being constant on each connected component of $\partial \OM$.

\begin{proposition}\label{prop basis} Let $(\Omega,b)$ be a smooth lake in the sense of \eqref{smooth lake}.
For $1\le k\le N$, there exists a unique simili harmonic function $\f^k$ such that
\[\f^k=0 \text{ on } \pd \widetilde{\OM}, \quad \f^k = \d_{ik} \text{ on }\pd \Cc^i, \ \forall i=1\dots N.\]
Moreover, the family $\{ \f^k \}_{k=1..N}$ forms a basis for the set of simili harmonic functions.
\end{proposition}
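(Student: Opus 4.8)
The plan is to construct each $\varphi^k$ by a standard ``lift and correct'' procedure that reduces the boundary-value problem to the homogeneous Dirichlet problem already solved in Proposition \ref{prop elliptic}, and then to deduce the basis property directly from the uniqueness statement there. The genuine analytic content (existence and uniqueness for the degenerate operator $\div[\frac1b\nabla\cdot\,]$) is entirely packaged inside Proposition \ref{prop elliptic}, which itself rests on the density Lemma \ref{lem density} and the Hardy inequality of Lemma \ref{HardyLem}; so the work here is largely organizational.

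For existence I would first fix a cut-off $g^k\in C^\infty(\widetilde\OM)$ equal to $1$ on a neighborhood of $\Cc^k$ and equal to $0$ on a neighborhood of $\pd\widetilde\OM\cup\bigcup_{i\ne k}\Cc^i$. Then $g^k=\d_{ik}$ on each $\pd\Cc^i$, $g^k=0$ on $\pd\widetilde\OM$, and crucially $\na g^k$ is supported in a compact subset of $\OM$. By (H2), $b\in C^3$ is bounded below on that support, so $b^{-1/2}\na g^k\in L^2(\OM)$ and $f^k:=\div[\frac1b\na g^k]$ lies in $L^2(\OM)$ (it even has compact support in $\OM$). Applying Proposition \ref{prop elliptic} to $f^k$ yields a unique $u^k\in X$ with $\div[\frac1b\na u^k]=f^k$ and $u^k\vert_{\pd\OM}=0$; I then set $\varphi^k:=g^k-u^k$, extended by the constant $\d_{ik}$ inside each island $\Cc^i$ (the extension of $u^k$ by zero across $\pd\Cc^i$ being in $H^1(\widetilde\OM)$ since $u^k\in H^1_0(\OM)$). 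By construction $\div[\frac1b\na\varphi^k]=0$ in $\OM$, $b^{-1/2}\na\varphi^k\in L^2(\OM)$, and $\varphi^k\in H^1_0(\widetilde\OM)$ with the required traces $\varphi^k=\d_{ik}$ on $\pd\Cc^i$ and $\varphi^k=0$ on $\pd\widetilde\OM$; hence $\varphi^k\in\mathcal{SH}$. For uniqueness, the difference $w$ of two such functions lies in $H^1_0(\widetilde\OM)$, satisfies $b^{-1/2}\na w\in L^2(\OM)$ and $\div[\frac1b\na w]=0$, and has vanishing trace on every component of $\pd\OM$ (both candidates share the same boundary constants); thus $w\in X$ solves the homogeneous equation, and the uniqueness half of Proposition \ref{prop elliptic} forces $w=0$.

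For the basis assertion, linear independence is read off from the traces: if $\sum_k c_k\varphi^k=0$, restricting to $\pd\Cc^i$ gives $c_i=0$. For spanning, take any $\F\in\mathcal{SH}$; by definition it is constant, say $=\a_i$, on each $\pd\Cc^i$ and, being in $H^1_0(\widetilde\OM)$, it vanishes on $\pd\widetilde\OM$. Then $\F-\sum_i\a_i\varphi^i$ is simili harmonic with zero trace on all of $\pd\OM$, hence belongs to $X$ and solves the homogeneous equation, so it vanishes by Proposition \ref{prop elliptic}; therefore $\F=\sum_i\a_i\varphi^i$ and $\{\varphi^k\}_{k=1}^N$ is a basis.

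The main obstacle is not analytic but is the careful bookkeeping around the degeneracy of $b$ and the geometry of the islands: one must choose the lift $g^k$ so that $\na g^k$ stays away from $\pd\OM$ (guaranteeing $f^k\in L^2$ and $b^{-1/2}\na g^k\in L^2$ in spite of a possibly vanishing $b$ at the shore), and one must consistently regard a simili harmonic function as an $H^1_0(\widetilde\OM)$ function that is \emph{constant on each island}, so that its trace on $\pd\Cc^i$ is well defined (as noted after Definition \ref{defi simili}) and the subtraction arguments in both the uniqueness and the spanning steps genuinely land in $X$.
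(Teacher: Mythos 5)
Your proposal is correct and follows essentially the same route as the paper: lift the prescribed boundary constants by a cut-off supported near $\Cc^k$ whose gradient stays away from $\pd\OM$ (so the source term is in $L^2$ despite the degeneracy of $b$), correct by the unique $X$-solution from Proposition \ref{prop elliptic}, and deduce both uniqueness and the basis property from the uniqueness half of that proposition. The only difference is cosmetic (your $u^k$ is the negative of the paper's $\tilde\f^k$), and your spelled-out linear independence/spanning argument is exactly what the paper leaves implicit.
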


\begin{proof}
Let $\delta=\frac{1}{10}\min_{i\ne j}\{\hbox{dist}(\Cc^i,\Cc^j),\hbox{dist}(\Cc^i,\partial \widetilde\OM)\}$. For each $k$, we introduce a cut-off function $\chi^k\in C^\infty_c(\widetilde{\OM})$ which is supported in a $\delta$-neighborhood of $\Cc^k$ and satisfies
\begin{equation}\label{chi}
\chi^k(x)=0 \text{ if } d(x,\Cc^k)>\delta,\quad \chi^k(x)=1 \text{ if } d(x,\Cc^k)<\delta/2.
\end{equation}
In particular,
\[
\chi^k = \d_{ik} \text{ in a neighborhood of }\Cc^i, \ \forall i=1\dots N.
\]
By Proposition \ref{prop elliptic}, there exists a unique solution  $\tilde\f^k\in X$  to the problem
 \[ \div \Bigl[ \frac1b \na \tilde \f^k \Bigl] = - \div \Bigl[ \frac1b \na \chi^k \Bigl]  \text{ in } \OM, \quad  \tilde \f^k =0 \text{ on } \pd \OM.\]
Indeed, since $\na \chi^k$ is smooth and vanishes near the boundaries, the right-hand side of the above problem clearly belongs to $L^2(\OM)$. Now if we define
\begin{equation}\label{defi phi}
 \f^k:= \tilde \f^k + \chi^k,
\end{equation}
the existence of a simili harmonic function $\varphi^k$ follows at once as claimed. 

The uniqueness follows from the uniqueness result in Proposition \ref{prop elliptic}: indeed, let $\f^1$ and $\f^2$ be two simili harmonic functions which have the same trace on each component of $\partial \OM$. Then, $\F:=\f^1-\f^2$ belongs to $H^1_0(\OM)$ and so $\F\in X$, which is the function space where the uniqueness was proved.

Finally, since any simili-harmonic function by definition is constant on each connected component of $\partial\OM$, it follows clearly that the family $\{\f^k\}_{1\le k\le N}$ forms a basis of $\mathcal{SH}$.
\end{proof}

To recognize the divergence free condition \eqref{imperm}, we need the following simple lemma:
\begin{lemma}\label{lem div}
Let $(\Omega,b)$ be a lake satisfying Assumption (H1)-(H2) (not necessarily smooth).
Let $\psi\in X$, $c_k\in \R$ and $\chi^k\in C^\infty(\OM)$ as introduced in \eqref{chi}. Then the vector function 
$$v:=\frac{\nabla^\perp(\p+\sum_{k=1}^N c_k \chi^k)}b$$
satisfies 
\begin{equation}\label{div-cond} \div(b v)=0 \text{ in } \Omega, \quad bv\cdot \nu = 0 \text{ on } \partial\Omega \quad \text{(in the sense of \eqref{imperm})}.\end{equation}
Conversely, let $v$ be a vector field so that $bv\in L^2(\OM)$ and \eqref{div-cond} holds. Then there exists $\psi \in H^1_0(\widetilde \OM)$ such that
\[ bv = \nabla^\perp \psi \text{ in $\overline \Omega$ and }  \pd_{\t} \p =0 \text{ on } \pd \OM.\]
\end{lemma}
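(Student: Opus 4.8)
The plan is to prove the two implications separately, reducing each to an integration by parts that is justified by density. For the \emph{direct} implication, note first that $bv=\nabla^\perp\Psi$ with $\Psi:=\psi+\sum_{k=1}^N c_k\chi^k$ lies in $L^2(\Omega)$: indeed $\psi\in H^1_0(\Omega)$ gives $\nabla^\perp\psi\in L^2$, while each $\nabla^\perp\chi^k$ is smooth and supported in the compact annulus $\{\delta/2\le d(\cdot,\Cc^k)\le\delta\}\Subset\Omega$ by \eqref{chi}. By \eqref{imperm} it then suffices to show $\int_\Omega\nabla^\perp\Psi\cdot\nabla p\,dx=0$ for every $p$ with $\nabla p\in G(\Omega)$. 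I would isolate the elementary identity: $\int_\Omega\nabla^\perp g\cdot\nabla p\,dx=0$ whenever $g$ has compact support in $\Omega$ and $\nabla p\in L^2(\Omega)$, which follows by moving both derivatives onto $g$ (no boundary terms) after mollifying $p$ on a neighbourhood of $\supp\nabla g$ and passing to the limit. Applying this with $g=\chi^k$ kills the island terms directly. For the $\psi$ part I would choose $\psi_j\in C^\infty_c(\Omega)$ with $\psi_j\to\psi$ in $H^1_0(\Omega)$ (no regularity of the lake is needed, only the definition of $H^1_0$), apply the identity to each $\psi_j$, and pass to the limit since $\nabla\psi_j\to\nabla\psi$ in $L^2$. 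This gives \eqref{div-cond}.

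For the \emph{converse}, the starting point is the equivalence recalled after \eqref{imperm}: since $bv\in L^2(\Omega)$ and \eqref{div-cond} holds, \cite[Lemma III.2.1]{Galdi} gives $bv\in\mathcal H(\Omega)$, so there exist $\varphi_j\in C_c^\infty(\Omega)$ with $\div\varphi_j=0$ and $\varphi_j\to bv$ in $L^2(\Omega)$. The heart of the argument is to produce stream functions for the $\varphi_j$ in a way that survives the limit. Since $\varphi_j$ has compact support in $\Omega$, I would extend it by zero to $\R^2$; the extension $\widetilde\varphi_j$ is smooth, compactly supported and divergence free on all of $\R^2$, and it vanishes on a neighbourhood of each island $\Cc^k$ and of $\pd\widetilde\OM$. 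As $\R^2$ is simply connected, the associated closed $1$-form is exact, so there is a smooth $\psi_j$ with $\nabla^\perp\psi_j=\widetilde\varphi_j$, unique up to an additive constant. I would normalize $\psi_j$ to vanish on the unbounded component of $\R^2\setminus\supp\widetilde\varphi_j$, which (because $\supp\varphi_j\Subset\Omega$) contains both the exterior of $\widetilde\OM$ and an interior collar of $\pd\widetilde\OM$. Hence $\psi_j\equiv0$ near $\pd\widetilde\OM$, so $\psi_j\in H^1_0(\widetilde\OM)$, while $\nabla\psi_j=-\widetilde\varphi_j^\perp$ vanishes on each $\Cc^k$, making $\psi_j$ constant on each island.

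To conclude I would pass to the limit. Extending $bv$ by zero across the islands to $\widetilde u\in L^2(\widetilde\OM)$, we have $\widetilde\varphi_j\to\widetilde u$ in $L^2(\widetilde\OM)$, hence $\nabla\psi_j=-\widetilde\varphi_j^\perp\to-\widetilde u^\perp$ in $L^2$. Since each $\psi_j\in H^1_0(\widetilde\OM)$, the Poincaré inequality makes $(\psi_j)$ a Cauchy sequence in $H^1_0(\widetilde\OM)$, with limit $\psi\in H^1_0(\widetilde\OM)$ satisfying $\nabla^\perp\psi=\widetilde u$ a.e.; restricting to $\Omega$ gives $bv=\nabla^\perp\psi$. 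Finally $\nabla\psi=-\widetilde u^\perp=0$ a.e. on the connected interior of each $\Cc^k$ forces $\psi$ to be constant there, and matching this constant with the trace from the $\Omega$ side on $\pd\Cc^k$ (together with $\psi=0$ on $\pd\widetilde\OM$ from $\psi\in H^1_0(\widetilde\OM)$) yields $\partial_\tau\psi=0$ on $\pd\OM$.

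The routine parts are the two integrations by parts and the Poincaré-based limit; the genuinely delicate point is constructing the stream function on the multiply connected and possibly non-smooth domain while guaranteeing membership in $H^1_0(\widetilde\OM)$ with the correct constant on each island. Extending the \emph{approximants} (rather than $bv$ itself) by zero to $\R^2$ is what removes both difficulties at once: it trivializes the topology, reducing exactness to the Poincaré lemma on a simply connected set, and it fixes the normalization near $\pd\widetilde\OM$ without ever invoking regularity of $\pd\OM$. I would also note that the converse only delivers $\psi\in H^1_0(\widetilde\OM)$ and not the finer statement $\psi-\sum_k\gamma_k\chi^k\in X$, since the latter would require $\sqrt b\,v\in L^2(\Omega)$, which is not among the hypotheses.
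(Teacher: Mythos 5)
Your proof is correct and follows essentially the same route as the paper's: both directions rest on the density of compactly supported (divergence-free) smooth fields, and the converse is built exactly as in the paper by taking stream functions of the approximants, locally constant off their supports, normalized to vanish near $\partial\widetilde\Omega$, and passed to the limit via the Poincar\'e inequality. The only cosmetic differences are that you verify \eqref{imperm} by testing directly against $G(\Omega)$ rather than via the equivalence with $\mathcal{H}(\Omega)$, and you produce the stream functions by extending the approximants by zero to $\R^2$ instead of invoking the Hodge--De Rham theorem on a smooth set containing their supports.
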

\begin{proof}
As $ \p\in X\subset H_{0}^1(\Omega)$, we can easily check that $\nabla^\perp \p$ belongs to $\mathcal{H}(\Omega)$ (see \eqref{impermbis}). Moreover, since $\chi^k$ is smooth and constant in a neighborhood of the boundary, $\nabla^\perp \chi^k$ verifies the boundary condition \eqref{imperm} and so does \eqref{div-cond}. 

The second one is a classical statement which does not depend on the regularity of $\partial \OM$. Indeed, as $bv$ verifies \eqref{impermbis}, we can find a  divergence-free vector $v_n\in C^\infty_c(\OM)$, such that $v_n \to bv$ in $L^2(\OM)$. Then $v_n$ is supported in a smooth set, and we can use the classical Hodge-De Rham theorem: $v_n=\nabla^\perp \p_n$ where $\p_n$ is constant near the boundary. Choosing $\p_n$ such that $\p_n(x)\equiv 0$ in a neighborhood of $\partial \widetilde \OM$, we then infer by Poincar\'e inequality that $\p_n \to \p$ strongly in $H^1(\OM)$, hence $bv=\nabla^\perp \p$ where $\psi \in H^1_0(\widetilde \OM)$, and $\partial_{\tau} \p\equiv 0$ on $\partial \OM$.
\end{proof}

\begin{remark}\label{rem div}
  Let $(\Omega,b)$ be a smooth lake in the sense of \eqref{smooth lake}. If $\psi\in X$ and $\F$ is a simili  harmonic function, then 
 $bv:=\nabla^\perp(\p+\F)$ verifies
 \[ \div(b v)=0 \text{ in } \Omega, \quad bv\cdot \nu = 0 \text{ on } \partial\Omega, \quad \text{(in the sense of \eqref{imperm})}.\]
 Indeed,  Proposition \ref{prop basis} states that there exists $c_k$ such that $\F\equiv \sum_{k=1}^N c_k \f^k$, so using \eqref{defi phi}, we can decompose $bv$ as $bv = \nabla^\perp(\tilde \p + \sum_{k=1}^N c_k \chi^k)$ with $\tilde \p\in X$. Then Lemma \ref{lem div} can be applied.
\end{remark}

With the regularity considered in Definition \ref{defi simili}, it is not clear that $\int_{\partial \Cc^k} \frac{\nabla^\perp \F}b\cdot \hat \tau \, ds$ is well defined. Using $\chi^k$ defined as in \eqref{chi}, we introduce the generalized circulation of a vector field $v$ around $\Cc^k$ by
\begin{equation}\label{circulation}
\g^k(v):= \int_{\OM} \div\Bigl[ \chi^k v^\perp\Bigl]\, dx = -\int_{\OM} \curl \Bigl[ \chi^k v\Bigl]\, dx =-\int_{\OM}\left(\na^\perp \chi^k\cdot v + \chi^k \curl v\right)\, dx .
\end{equation}

\begin{lemma}\label{1to1} Let $(\Omega,b)$ be a smooth lake in the sense of \eqref{smooth lake}.
If $\p$ is a simili harmonic function such that the generalized circulation of the vector field $\frac{\na^\perp \p}{b}$ around each $\mathcal{C}^k$ is equal to zero for all $k$, then $\p$ must be identically zero.
\end{lemma}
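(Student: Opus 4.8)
The plan is to turn the $N$ vanishing-circulation hypotheses into $X$-orthogonality relations and then use the basis structure from Proposition \ref{prop basis}. Concretely, since $\p$ is simili harmonic, Proposition \ref{prop basis} lets me write $\p=\sum_{k=1}^N c_k\f^k$. Setting $v:=\na^\perp\p/b$, I will show that the hypotheses $\g^k(v)=0$ force $\langle\p,\f^j\rangle_X=0$ for every $j$, whence, expanding $\p$ in the basis, $\|\p\|_X^2=\sum_k c_k\langle\p,\f^k\rangle_X=0$; as $b>0$ in $\OM$ this gives $\na\p=0$ a.e., and together with $\p\in H^1_0(\widetilde\OM)$ it forces $\p\equiv0$.

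The heart of the argument is the identity
\[
\langle\p,\f^j\rangle_X=-\g^j\Bigl(\tfrac{\na^\perp\p}{b}\Bigr),\qquad 1\le j\le N.
\]
To prove it I first note that, $\p$ being simili harmonic, $\curl v=\curl\bigl(\tfrac1b\na^\perp\p\bigr)=\div\bigl(\tfrac1b\na\p\bigr)=0$, so the $\curl$ term in the definition \eqref{circulation} of $\g^j$ drops out and $\g^j(v)=-\int_\OM\na^\perp\chi^j\cdot v\,dx$. I then use the decomposition \eqref{defi phi}, $\f^j=\tilde\f^j+\chi^j$ with $\tilde\f^j\in X$, to split $\langle\p,\f^j\rangle_X=\langle\p,\tilde\f^j\rangle_X+\int_\OM\tfrac1b\na\p\cdot\na\chi^j\,dx$. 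The first summand vanishes: the distributional equation $\div(\tfrac1b\na\p)=0$ gives $\langle\p,h\rangle_X=0$ for all $h\in C^\infty_c(\OM)$, and this propagates to $\tilde\f^j\in X$ by the density Lemma \ref{lem density}. For the second summand I use the elementary relations $\tfrac1b\na\p=-v^\perp$ and $v^\perp\cdot\na\chi^j=-v\cdot\na^\perp\chi^j$ to rewrite $\int_\OM\tfrac1b\na\p\cdot\na\chi^j\,dx=\int_\OM v\cdot\na^\perp\chi^j\,dx=-\g^j(v)$, which is the claimed identity.

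The step I expect to be most delicate is the vanishing of $\langle\p,\tilde\f^j\rangle_X$. All the weighted pairings are finite only because a simili harmonic function controls $b^{-1/2}\na\p$ in $L^2$ (rather than $\na\p$ itself), so one must verify that the weak formulation of $\div(\tfrac1b\na\p)=0$, a priori tested only against $C^\infty_c(\OM)$, indeed extends to the test function $\tilde\f^j\in X$; this is exactly the role of Lemma \ref{lem density}. Once this orthogonality is secured, the remaining manipulations (the vanishing of $\curl v$ and the $\perp$-algebra) are routine, and the conclusion $\p\equiv0$ follows immediately.
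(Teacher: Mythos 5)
Your proof is correct, but it takes a genuinely different route from the paper's. The paper first upgrades the regularity of $v=\na^\perp \p/b$ (continuity via local elliptic regularity, using the interior smoothness of $b$), shows by a homotopy/loop-decomposition argument that the classical circulation of $v$ along every closed curve vanishes, and thereby constructs a scalar potential $f$ with $v=\na f$; the conclusion $\int_\OM b|v|^2\,dx=0$ is then obtained by pairing $\na^\perp\p$ against $\na f$ and killing the resulting integrals through the density of $C^\infty_c(\OM)$ in $X$. You bypass the potential entirely: you identify the weighted Dirichlet pairing $\langle\p,\f^j\rangle_X$ with $-\g^j(v)$ by splitting $\f^j=\tilde\f^j+\chi^j$ as in \eqref{defi phi}, killing the $\tilde\f^j$-term by exactly the density mechanism used in \eqref{cancellation}, and reading the $\chi^j$-term as the circulation; the basis expansion $\p=\sum_k c_k\f^k$ from Proposition \ref{prop basis} then gives $\|\p\|_X^2=0$ directly. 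Your argument is shorter and purely variational --- it needs neither the continuity of $v$ nor the topological decomposition of loops --- at the small cost of a notational abuse: $\p$ and $\f^j$ are not elements of $X$ (they do not vanish on $\partial\Cc^k$), so $\langle\cdot,\cdot\rangle_X$ must be read as the bilinear form $\int_\OM b^{-1}\na f\cdot\na g\,dx$, which is indeed finite for simili harmonic functions. Both proofs ultimately rest on the same two ingredients, Lemma \ref{lem density} and the basis structure of Proposition \ref{prop basis} (whose proof does not depend on this lemma, so there is no circularity), and both conclude identically from $\na\p=0$ a.e.\ together with $\p\in H^1_0(\widetilde\OM)$.
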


\begin{proof}
Set
\[v:=\frac{1}{b}\nabla^\perp\psi.\]
We begin the proof with the following claim: 
there exists a function $f$ such that
\begin{equation}\label{Grad}
v=\nabla f.
\end{equation}
We observe that 
\begin{equation}\label{local ellip}
\curl (v)=\div (\frac{1}{b}\nabla \psi)=0.
\end{equation}
From Remark \ref{rem div}, we get that
 \[ \div(b v)=0 \text{ in } \Omega, \quad bv\cdot \nu = 0 \text{ on } \partial\Omega, \quad \text{(in the sense of \eqref{imperm})}.\]
As $b$ is regular, by local elliptic regularity we deduce from \eqref{local ellip} that $v$ is a continuous function in $\OM$. Thus we can define the classical circulation $\oint v\cdot \tau ds$ along any closed path and we infer by the curl free property that this circulation does not depend on the homotopy class of the path. Next, choose $c_k$ a closed curve supported in the region where $\chi^k=0$ so that $c_k$ is homotopic to $\partial\mathcal{C}^k$ (see \eqref{chi} for the definition of $\chi^k$). Let $c^\prime_k$ be another homotopic path supported in $\{x\in \OM,\ \chi^k(x)=1\}$. We let $A^k$ be the region bounded by $c_k$ and $c^\prime_k$. Using \eqref{circulation}, we then compute that
\begin{equation*}
\begin{split}
\int_{c_k}v\cdot\tau ds&=\int_{c^\prime_k}v\cdot\tau ds=\left(\int_{c^\prime_k}-\int_{c_k}\right)(\chi^kv)\cdot\tau ds=-\int_{A^k}\curl (\chi^kv)\, dx=-\int_{\Omega}\curl (\chi^kv)\, dx=0,
\end{split}
\end{equation*}
where we have used the fact that
\begin{equation*}
\curl (\chi^kv)=\chi^k\curl (v)-v^\perp\cdot\nabla\chi^k
\end{equation*}
vanishes outside of $A^k$.

Since any smooth loop can be decomposed as a concatenation of a finite number of loops which are either homotopic to the trivial loop or homotopic to one of the $c_k$'s, we have that the circulation of $v$ along any closed curve vanishes. Therefore, fixing $P$ an arbitrary point in $\OM$ and letting
\begin{equation*}
f(Q)=\int_{\gamma_{PQ}}v\cdot\tau ds
\end{equation*}
where $\gamma_{PQ}$ is any (smooth) path from $P$ to $Q$, we obtain \eqref{Grad}.

As $\psi$ is a simili harmonic function, we have that  $\frac{\nabla^\perp\psi}{\sqrt b}=\sqrt b v=\sqrt b\nabla f$ belongs to $L^2(\OM)$, hence,
\begin{equation}\label{bv2}
\int_{\Omega}b\vert v\vert^2\, dx=\int_{\Omega}\frac{\nabla^\perp\psi}{\sqrt b}\cdot \sqrt b\nabla f \, dx.
\end{equation}
Moreover, by Proposition \ref{prop basis} and \eqref{defi phi} we can decompose $\psi$ as
\[
\psi = \sum_{k=1}^N c_{k} \varphi^k= \psi^0 +  \sum_{k=1}^N c_{k} \chi^k
\]
where $\psi^0\in X$. By density of $C^\infty_{c}(\Omega)$ in $X$ (see Lemma \ref{lem density}) we have
\begin{equation*}
\int_{\Omega}\frac{\nabla^\perp\psi^0}{\sqrt b}\cdot \sqrt b\nabla f \, dx=\lim_{n\to \infty} \int_{\Omega}\frac{\nabla^\perp\psi_{n}}{\sqrt b}\cdot \sqrt b\nabla f \, dx=\lim_{n\to \infty}\int_{\Omega}{\nabla^\perp\psi_{n}}\cdot \nabla f \, dx=0
\end{equation*}
where we have integrated by parts and used that $\psi_{n}\in C^\infty_{c}(\Omega)$. Moreover, as $\chi^k$ is smooth and $\nabla \chi^k$ vanishes close to the boundary, we also have by an integration by parts:
\[
\int_{\Omega}\frac{\nabla^\perp\chi^k}{\sqrt b}\cdot \sqrt b\nabla f \, dx=\int_{\Omega}{\nabla^\perp\chi^k}\cdot \nabla f \, dx=0
\]
for all $k$.

Putting together these two relations, \eqref{bv2} implies that $v$ is equal to zero, from which we conclude that $\psi$ is constant in $\Omega$. Since $\psi\in H^1_0(\widetilde\OM)$, $\psi$ vanishes in $\Omega$ as claimed.
\end{proof}

\begin{proposition}\label{Psi}
Let $(\Omega,b)$ be a smooth lake in the sense of \eqref{smooth lake}.
There exists a basis $\{\psi^k\}_{k=1}^N$ of $ \mathcal{SH}$ which satisfies
\begin{equation*}
\g^i\Bigl(\frac{\nabla^\perp \psi^k}{b}\Bigl)=\delta_{ik}, \qquad \forall ~ i,k.
\end{equation*}

\end{proposition}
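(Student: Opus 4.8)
The plan is to recognize this as a pure linear-algebra statement, built on the three facts already in hand: Proposition \ref{prop basis} gives that $\mathcal{SH}$ is an $N$-dimensional vector space (with basis $\{\f^k\}_{k=1}^N$), the generalized circulation \eqref{circulation} provides $N$ linear functionals, and Lemma \ref{1to1} says these functionals separate points. So I would introduce the linear map
\[
L:\mathcal{SH}\to\R^N,\qquad L(\p):=\Bigl(\g^1\bigl(\tfrac{\na^\perp \p}{b}\bigr),\dots,\g^N\bigl(\tfrac{\na^\perp \p}{b}\bigr)\Bigr),
\]
and exhibit the desired $\psi^k$ as $L^{-1}(e_k)$, where $\{e_k\}$ is the standard basis of $\R^N$.

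First I would check that $L$ is well defined, i.e. that each component $\g^i(\na^\perp\p/b)$ makes sense for $\p\in\mathcal{SH}$. Writing $v:=\na^\perp\p/b$, the definition of $\mathcal{SH}$ gives $b^{-1/2}\na\p\in L^2(\OM)$, and since $b\le M$ by (H2) we get $\na\p\in L^2(\OM)$; on the support of $\na^\perp\chi^i$, which is a compact subset of $\OM$ on which $b\ge\theta>0$, this yields $v\in L^2(\supp\na^\perp\chi^i)$. Moreover $\curl v=\div(\tfrac1b\na\p)=0$ because $\p$ is simili harmonic, so from \eqref{circulation} we have $\g^i(v)=-\int_{\OM}\na^\perp\chi^i\cdot v\,dx$, a well-defined (and evidently linear in $\p$) quantity. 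This makes $L$ a well-defined linear map between two spaces of dimension $N$.

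Next I would invoke Lemma \ref{1to1} to conclude that $\ker L=\{0\}$: if $L(\p)=0$ then the generalized circulation of $\na^\perp\p/b$ around every $\Cc^k$ vanishes, so by that lemma $\p\equiv0$. Since $\dim\mathcal{SH}=N=\dim\R^N$ by Proposition \ref{prop basis}, an injective linear map between equidimensional spaces is an isomorphism, hence $L$ is bijective. Defining $\psi^k:=L^{-1}(e_k)$ then gives exactly $\g^i(\na^\perp\psi^k/b)=\d_{ik}$ for all $i,k$, and since $L$ is an isomorphism carrying $\{\psi^k\}$ onto the basis $\{e_k\}$ of $\R^N$, the family $\{\psi^k\}_{k=1}^N$ is a basis of $\mathcal{SH}$.

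I do not expect a serious obstacle here, since all the analytic content is packaged in Lemma \ref{1to1} (injectivity) and Proposition \ref{prop basis} (dimension count); the only point demanding a little care is the well-definedness of the functionals $\g^i$ on $\mathcal{SH}$, which is precisely why the \emph{generalized} circulation \eqref{circulation} rather than the classical path integral is used, and which follows from (H2) together with $\curl(\na^\perp\p/b)=0$ as above.
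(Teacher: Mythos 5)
Your proposal is correct and follows essentially the same route as the paper: the paper also defines the linear map $\Psi(g)=(\g^1(\na^\perp g/b),\dots,\g^N(\na^\perp g/b))$ on $\mathcal{SH}$, invokes Lemma \ref{1to1} for injectivity and Proposition \ref{prop basis} for $\dim\mathcal{SH}=N$, and sets $\psi^k=\Psi^{-1}(e_k)$. Your extra paragraph verifying that the generalized circulations are well defined on $\mathcal{SH}$ is a welcome (and correct) elaboration of a point the paper leaves implicit.
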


\begin{proof} Consider the linear mapping
\begin{equation*}
\Psi: \mathcal{SH}\to \mathbb{R}^N, \quad \Psi(g) = (\gamma_1,\dots,\gamma_N), \quad \gamma_i:=\g^i\Bigl(\frac{\nabla^\perp g}{b}\Bigl) .
\end{equation*}
Lemma \ref{1to1} states that $\Psi$ is one-to-one and Proposition \ref{prop basis} implies that ${\rm dim}\  \mathcal{SH}=N$. Consequently, $\Psi$ is onto and we can define $\psi^i=\Psi^{-1}(e_i)$ where $e_i$ is the $i$-th vector in the canonical basis of $\mathbb{R}^N$. \end{proof}

\begin{proposition}[Decomposition]\label{decomposition}
Let $(\Omega,b)$ be a smooth lake in the sense of \eqref{smooth lake}.
 Let $\om \in L^\infty(\OM)$ and $\gamma_{0} = (\gamma_0^1,\cdots,\gamma_0^N) \in \R^N$. Then there exists a unique vector field $v$ such that $\sqrt{b}v\in L^2(\OM)$,
\begin{equation}\label{decom1}
  \div(b v)=0 \text{ in } \Omega, \quad bv\cdot \nu = 0 \text{ on } \partial\Omega, \quad \text{(in the sense of \eqref{imperm})}
\end{equation}
and
\begin{equation}\label{decom2}
 \curl (v) = b\omega \quad\text{ in }\quad \mathcal{D}'(\OM),  \qquad \gamma^i(v)=\gamma^i_{0}.
\end{equation}
Moreover, we have the following Biot-Savart formula:
\begin{equation}\label{BiotSavart}
v=b^{-1}\nabla^\perp \psi^0+\sum_{i=1}^N\alpha^i b^{-1}\nabla^\perp\psi^i,
\end{equation}
where $\psi^i\in\mathcal{SH}$ is the function defined as in Proposition \ref{Psi} above,
$\alpha^i=\gamma^i_{0}+\int_{\OM}b\omega\varphi^i \, dx, $ $\varphi^i$ defined as in Proposition \ref{prop basis},
and $\psi^0\in X$ the unique solution (see Proposition \ref{prop elliptic}) of the problem
\begin{equation*}
\div (\frac{1}{b}\nabla \psi^0)=b\omega \quad \text{ in } \mathcal{D}'(\OM),\quad\psi^0\in X .
\end{equation*}

\end{proposition}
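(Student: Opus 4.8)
The plan is to prove this decomposition proposition, which is essentially a Biot-Savart law for the lake equations on a multiply-connected domain. Let me understand what's being claimed: given vorticity $\omega$ and circulations $\gamma_0$, we want a unique velocity $v$ with $\sqrt{b}v \in L^2$ satisfying the divergence-free/tangency condition, the curl condition, and the circulation condition. The proof should verify that the explicitly constructed formula \eqref{BiotSavart} works, then prove uniqueness.

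First I would verify that the formula \eqref{BiotSavart} gives a valid velocity field. The construction builds $v$ from $\psi^0 \in X$ solving the elliptic problem $\div(b^{-1}\nabla\psi^0) = b\omega$ (well-defined by Proposition \ref{prop elliptic}, since $b\omega \in L^2$ by (H2) and $\omega \in L^\infty$), plus a linear combination of the simili-harmonic basis functions $\psi^i$ from Proposition \ref{Psi}. The key checks are: (i) $\sqrt{b}v \in L^2$, which follows since $b^{-1/2}\nabla\psi^0 \in L^2$ by definition of $X$ and $b^{-1/2}\nabla\psi^i \in L^2$ by the definition of simili-harmonic functions; (ii) the divergence-free and tangency condition \eqref{decom1}, which follows from Remark \ref{rem div} applied to $\psi^0 + \sum \alpha^i \psi^i$ (a sum of an $X$-function and a simili-harmonic function); (iii) the curl condition, $\curl v = \div(b^{-1}\nabla(\psi^0 + \sum\alpha^i\psi^i)) = b\omega + 0 = b\omega$, since the $\psi^i$ are simili-harmonic.

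The more delicate step is to check the circulation condition $\gamma^i(v) = \gamma^i_0$, which pins down the coefficients $\alpha^i$. Here I would compute $\gamma^i(v)$ using the linearity of the generalized circulation \eqref{circulation} and the normalization $\gamma^i(b^{-1}\nabla^\perp\psi^k) = \delta_{ik}$ from Proposition \ref{Psi}. This gives $\gamma^i(v) = \gamma^i(b^{-1}\nabla^\perp\psi^0) + \alpha^i$, so I must show $\gamma^i(b^{-1}\nabla^\perp\psi^0) = \int_\OM b\omega\varphi^i\,dx - (\text{correction})$, or rather verify that the stated value $\alpha^i = \gamma^i_0 + \int_\OM b\omega\varphi^i\,dx$ is exactly what makes $\gamma^i(v) = \gamma^i_0$. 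Expanding $\gamma^i(b^{-1}\nabla^\perp\psi^0) = -\int_\OM(\nabla^\perp\chi^i \cdot v_0 + \chi^i \curl v_0)\,dx$ with $v_0 = b^{-1}\nabla^\perp\psi^0$ and $\curl v_0 = b\omega$, I would integrate by parts and use that $\varphi^i = \tilde\varphi^i + \chi^i$ with $\tilde\varphi^i \in X$ solving the associated elliptic problem, so that the term $-\int \nabla^\perp\chi^i \cdot v_0\,dx$ combines with $-\int\chi^i b\omega\,dx$ to produce precisely $-\int_\OM b\omega\varphi^i\,dx$. This pairing is where the specific definition \eqref{defi phi} of $\varphi^i$ is used, and I expect this algebraic bookkeeping — correctly tracking the integration by parts against the weak formulation of the elliptic problem defining $\tilde\varphi^i$ — to be the main obstacle.

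Finally, uniqueness follows from Lemma \ref{1to1} together with the uniqueness in Proposition \ref{prop elliptic}. If $v_1, v_2$ both satisfy \eqref{decom1}-\eqref{decom2}, then $w := v_1 - v_2$ satisfies the divergence-free/tangency condition with $\curl w = 0$ and all generalized circulations zero. By the converse part of Lemma \ref{lem div}, $bw = \nabla^\perp\psi$ for some $\psi \in H^1_0(\widetilde\OM)$ with $\partial_\tau\psi = 0$ on $\partial\OM$; the curl-free condition $\curl w = \div(b^{-1}\nabla\psi) = 0$ together with $b^{-1/2}\nabla\psi \in L^2$ shows $\psi$ is simili-harmonic, and vanishing circulations force $\psi \equiv 0$ by Lemma \ref{1to1}, hence $w = 0$.
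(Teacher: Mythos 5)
Your proposal is correct and follows essentially the same route as the paper: verify that the explicit formula satisfies \eqref{decom1}--\eqref{decom2} (with the delicate step, correctly identified, being the computation of $\gamma^i(b^{-1}\nabla^\perp\psi^0)=-\int_\OM b\omega\varphi^i\,dx$ via the splitting $\varphi^i=\tilde\varphi^i+\chi^i$, the density of $C^\infty_c(\OM)$ in $X$, and integration by parts against the simili-harmonicity of $\varphi^i$), then deduce uniqueness from Lemma \ref{lem div} and Lemma \ref{1to1}. The only cosmetic difference is that you subtract two solutions before passing to stream functions, whereas the paper subtracts the constructed stream function from that of a putative second solution; the argument is identical.
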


\begin{proof} We begin by showing that the vector field defined as
\[
u:=b^{-1}\nabla^\perp \psi^0+\sum_{i=1}^N \Bigl(\gamma^i_{0}+\int_{\OM}b\omega\varphi^i \, dx \Bigl) b^{-1}\nabla^\perp\psi^i
\]
verifies \eqref{decom1}-\eqref{decom2}. By definition, $\sqrt{b}u\in L^2(\OM)$. The curl condition in \eqref{decom2} is obvious from the definitions of $\psi^0$ and the simili harmonic functions. Condition \eqref{decom1} comes from Remark \ref{rem div}. The hardest part is to compute the circulation  of  $b^{-1} \na^\perp \psi^0$. By the definition \eqref{defi phi} of $\f^i$, we use that $\chi^i = \f^i - \tilde \f^i$ with $\tilde \f^i \in X$, to get:
\begin{equation}\label{circu of K}
\g^i(b^{-1} \na^\perp \psi^0) = -\int_{\OM}\left(\na^\perp \f^i  \cdot b^{-1} \na^\perp \psi^0 + \f^i b \om \right)\, dx +\int_{\OM} \div\Bigl[ \tilde \f^i b^{-1} \na \psi^0 \Bigl]\, dx.
\end{equation}
Now, for any $\F \in C^\infty_c(\OM)$, we note that
\[\int_{\OM} \div\Bigl[  \F b^{-1} \na \psi^0 \Bigl]\, dx =0,\]
and as $\tilde \f^i$ belongs to $X$ and $C^\infty_c$ is dense in $X$, we deduce from the fact that both $b^{-1/2} \na \psi^0$ and $b\om$ belong to $L^2(\OM)$ that
\begin{equation}\label{cancellation}
\int_{\OM} \div\Bigl[ \tilde \f^i b^{-1} \na  \psi^0 \Bigl]\, dx =\int_{\OM}\Bigl[ b^{-1/2}(\na \tilde \f^i - \na \F)\cdot b^{-1/2} \na  \psi^0 - (\tilde \f^i-\F) b\om  \Bigl]\, dx=0.
\end{equation}
Moreover, that $\psi^0\in X$ allows us to integrate by parts the first term on the right hand side of \eqref{circu of K}, giving
\[ -\int_{\OM} \na^\perp \f^i  \cdot  b^{-1}\na^\perp \psi^0  \, dx=-\int_{\OM} b^{-1}\na \f^i  \cdot  \na \psi^0 \, dx = \int_{\OM}\div \left(b^{-1} \na \f^i \right)  \psi^0 \, dx=0.\]
The last identity was due to the fact that $\f^i$ is a simili harmonic function. Therefore, putting these last two equalities together with \eqref{circu of K}  gives  that $\gamma^i(u)=-\int_{\OM}b\omega\varphi^i \, dx +  \Bigl(\gamma^i_{0}+\int_{\OM}b\omega\varphi^i \, dx \Bigl) =\gamma^i_{0}$,  which shows that $u$ verifies \eqref{decom1}-\eqref{decom2}.

To prove the uniqueness of $u$, let $v$ be another vector field such that $\sqrt{b}v\in L^2(\OM)$ and $v$ satisfies \eqref{decom1}-\eqref{decom2}.

By Lemma \ref{lem div},  there exists $\psi \in  H^1_{0}(\widetilde\Omega)$ such that
\[ bv = \nabla^\perp \psi \text{ in } \mathcal{D}'(\overline{\OM}), \quad \pd_{\t} \p =0 \text{ on } \pd \OM.\]
So 
\[\Psi:= \psi-\Bigl(  \psi^0+\sum_{i=1}^N \Bigl(\gamma^i_{0}+\int_{\OM}b\omega\varphi^i \, dx \Bigl) \psi^i\Bigl)\]
is a simili harmonic function such that the circulation of $\frac{\na^\perp \P}{b}$ around each $\mathcal{C}^k$  is equal to zero. Lemma \ref{1to1} gives that $v=u$, which ends the proof.
\end{proof}

\subsection{Existence of a global weak solution}\label{sect 2.2}

In this subsection we prove the existence of a global weak interior solution for the vorticity formulation:

\begin{lemma}\label{WeakIntSol} Let $(\Omega,b)$ be a smooth lake in the sense of \eqref{smooth lake}.
Let $\omega^0\in L^\infty(\OM)$ and $\{\gamma_0^i\}_{1\le i\le N}$ fixed numbers and define $v^0$ by \eqref{BiotSavart}. Then there exists a global weak interior solution to the lake equations on $(\Omega,b)$ in the vorticity formulation (see Definition \ref{defi-weak-sol}). Moreover, the circulations of this solution are conserved.
\end{lemma}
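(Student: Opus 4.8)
The plan is to construct the solution by the vanishing viscosity method of \cite{LOT1}, coupling a transport--diffusion regularization of \eqref{transport} with the Biot--Savart reconstruction of Proposition \ref{decomposition}. For each $\e>0$ I would look for $\om^\e$ solving
\begin{equation*}
\pd_t(b\om^\e)+\div(b v^\e \om^\e)=\e\,\div(b\na\om^\e)\ \text{ in }\OM,\qquad \pd_\nu\om^\e=0\ \text{ on }\pd\OM,\qquad \om^\e(0)=\om^0,
\end{equation*}
where at each time $v^\e(t)$ is reconstructed from $\om^\e(t)$ and the \emph{fixed} circulation vector $\g_0$ through the formula \eqref{BiotSavart}. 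Since $\div(b v^\e)=0$ the drift is divergence-free, and for fixed $\e$ the existence of a smooth global solution $\om^\e$ follows from a standard Galerkin or fixed-point scheme, using that the lake is smooth and that (H2) makes the diffusion uniformly elliptic on each compact subset. Crucially, the circulations are conserved along the regularization \emph{by construction}, since $\g^i(v^\e(t))=\g^i_0$ for all $t$.

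The next step is a set of estimates uniform in $\e$. Testing the equation against $|\om^\e|^{p-2}\om^\e$ and using $\div(b v^\e)=0$ together with the Neumann condition, the diffusion term acquires a favorable sign, so (as in Lemma \ref{lem-vorticity}) the quantity $\|b^{1/p}\om^\e(t)\|_{L^p}$ is nonincreasing; letting $p\to\infty$ gives the $\e$-uniform bound $\|\om^\e\|_{L^\infty(\R_+\times\OM)}\le\|\om^0\|_{L^\infty}$. Plugging this bound into \eqref{BiotSavart} controls $\sqrt b\,v^\e$ uniformly in $L^\infty(\R_+;L^2(\OM))$. The case $p=2$ yields moreover the dissipation bound $\e\int_0^T\!\int_\OM b|\na\om^\e|^2\,dx\,dt\lesssim\|\sqrt b\,\om^0\|_{L^2}^2$, uniform in $\e$, which will make the viscous contribution disappear in the limit.

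The heart of the argument is the strong $L^2_{\loc}$ compactness of the \emph{velocity}. On any compact $K\subset\OM$, (H2) gives $b\ge\theta_K>0$, so from $\curl v^\e=b\om^\e\in L^\infty$ and $\div v^\e=-b^{-1}\na b\cdot v^\e\in L^2(K)$ the interior div--curl elliptic estimate bounds $v^\e$ uniformly in $L^\infty(\R_+;H^1(K'))$ for $K'\Subset K$. On the other hand the equation controls $\pd_t(b\om^\e)$ in $L^2_{\loc}(\R_+;H^{-1}_{\loc}(\OM))$ (the flux $b v^\e\om^\e$ is bounded in $L^2_{\loc}$ and the viscous flux is $O(\sqrt\e)$), and since the Biot--Savart operator of Proposition \ref{decomposition} gains one derivative, this transfers into $\tfrac12$-Hölder equicontinuity in time of $v^\e$ with values in $L^2_{\loc}$. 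Combining the spatial $H^1_{\loc}$ bound (Rellich) with this time equicontinuity (Arzelà--Ascoli, or Aubin--Lions), I extract $v^\e\to v$ strongly in $L^2_{\loc}(\R_+;L^2_{\loc}(\OM))$, while $\om^\e\rightharpoonup\om$ weak-$*$ in $L^\infty$. In the interior weak formulation \eqref{eq-vort}, with $\f\in C^\infty_c([0,\infty)\times\OM)$, the bilinear term $\int b\om^\e\,v^\e\cdot\na\f$ then passes to the limit by strong$\times$weak convergence, the viscous term vanishes, and the relations $\curl v=b\om$, the conditions in Definition \ref{defi-weak-sol}, and $\g^i(v(t))=\g^i_0$ all survive, giving the desired interior weak solution with conserved circulations.

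I expect the main obstacle to be precisely the degeneracy $b\to0$ on $\pd\OM$, which forbids estimates up to the boundary and confines both the div--curl regularity for $v^\e$ and the time equicontinuity to compact subsets of $\OM$; this is exactly what restricts the present construction to an \emph{interior} solution. It also explains why the strong compactness must be extracted for $v^\e$, which is locally $H^1$, rather than for $\om^\e$, whose gradient is only bounded like $\e^{-1/2}$ and which therefore converges merely weak-$*$.
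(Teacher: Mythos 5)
Your overall architecture (vanishing viscosity, conservation of circulations by construction through the Biot--Savart reconstruction, $L^p$ vorticity estimates, energy dissipation bound, weak-$*$ limits for everything linear) is the same as the paper's. There is, however, one genuine gap at the very first step: you do not regularize the coefficient $b$, so your viscous problem $\pd_t(b\om^\e)+\div(bv^\e\om^\e)=\e\div(b\na\om^\e)$ is a \emph{degenerate} parabolic problem when the topography vanishes at the shore ($b\sim d(x)^{a_k}$). The assertion that solvability ``follows from a standard Galerkin or fixed-point scheme'' because (H2) gives ellipticity on compact subsets does not hold up: the form $\e\int b|\na\om|^2$ is not coercive on $H^1_0$ (or $H^1$) near $\pd\OM$, the Neumann condition $\pd_\nu\om^\e=0$ is not the natural boundary condition for $\div(b\na\cdot)$ when $b$ degenerates (the conormal condition is $b\pd_\nu\om=0$, which is vacuous where $b=0$), and the regularity up to the boundary needed to justify your integrations by parts in the $L^p$ estimates is exactly what degenerate elliptic theory makes delicate. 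The paper avoids all of this by working with $b_\e:=b+\e\ge\e>0$ (and mollified initial data $\om^0_\e\in C^\infty_c$), which makes the approximate problem uniformly parabolic and lets one quote \cite{LOT1} verbatim. Your proof is repaired by the same substitution; as written, the existence of $\om^\e$ is unjustified.

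Your compactness argument, on the other hand, is correct but genuinely different from the paper's, and worth comparing. The paper's Section \ref{sect 2.2} deliberately does \emph{not} claim strong compactness of the full velocity: it extracts compactness only of the Leray projection $\mathbb{P}_{\mathcal{O}}v_\e$ (equicontinuity in time being obtained from the velocity-type equation tested against divergence-free fields, which says nothing about the gradient part $\na q_\e$), and then passes to the limit in the nonlinearity by exhibiting an algebraic cancellation in $\div[b_\e\na q_\e\ot\na q_\e]$ paired with $\na^\perp\f/b_\e$. You instead control the full time derivative: $\pd_t(b\om^\e)=\div F_\e$ with $F_\e=-bv^\e\om^\e+\e b\na\om^\e$ bounded in $L^2_{\loc}(\R_+;L^2(\OM))$ globally in space, and feeding this through the elliptic solver of Proposition \ref{prop elliptic} (extended by Lax--Milgram in $X$ to divergence-form data, together with the bound on $\pd_t\alpha^i$) yields $\sqrt{b}\,\pd_t v^\e$ bounded in $L^2_{\loc}(\R_+;L^2(\OM))$; combined with the interior div--curl $H^1_{\loc}$ bound, Aubin--Lions gives strong $L^2_{\loc}$ convergence of $v^\e$ itself, and the nonlinear term passes by weak-strong convergence with no cancellation needed. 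This sidesteps the obstruction the paper describes (the failure of $Id:L^2(\mathcal{O})\to\mathcal{V}'(\mathcal{O})$ to be an embedding) because your time-derivative bound comes from the vorticity equation through the Biot--Savart law, tested against all of $L^2$, not from the momentum equation tested against solenoidal fields. It is in fact exactly the mechanism the paper uses later, in Lemma \ref{lem-psi0} and the estimate of $\pd_t\alpha_n^k$ in Section \ref{sect 3}; applied here it gives an arguably cleaner proof of Lemma \ref{WeakIntSol}, at the price of spelling out the divergence-form extension of Proposition \ref{decomposition} and the justification of $\pd_t\psi^0_\e=\psi^0[\pd_t\om^\e]$, which your one-line ``gains one derivative'' leaves implicit.
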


The original idea comes from Yudovich: we introduce an artificial viscosity $\varepsilon \div(b \nabla \omega_\varepsilon)$ in the vorticity equations, assuming the Dirichlet condition for the vorticity at the boundary. This viscosity is artificial, because of the boundary condition: in the (physically relevant) Navier-Stokes equations, the Dirichlet condition is given on $v$, which does not imply the Dirichlet condition on the vorticity. Although the inviscid problem in Navier-Stokes equations is a hard issue, the limit problem $\varepsilon\to 0$ with the artificial viscosity is possible to achieve. Actually, to use directly a result from \cite{LOT1}, we consider $b_{\varepsilon} :=b+\varepsilon\geq \varepsilon>0$, and we approximate $\om^0$ by $\om^0_\varepsilon \in C^\infty_{c}$ such that $\|\om^0_\varepsilon\|_{L^\infty}\leq 2\|\om^0\|_{L^\infty}$. As $b_{\varepsilon}$ is strictly positive, standard arguments for Navier-Stokes equations \cite{LOT1} gives the existence and uniqueness of a global solution 
$$\omega_\varepsilon \in C([0,\infty);H^1_{0}(\Omega))\cap L^2_{\loc} ([0,\infty);H^2(\Omega))$$
of the problem (in the sense of distribution)
\begin{equation}\label{AppEq}
\left\lbrace \begin{aligned}
&\pd_t (b_\varepsilon  \omega_\varepsilon  ) + b_\varepsilon  v_\varepsilon  \cdot \nabla \omega_\varepsilon -\varepsilon \div(b_\varepsilon \nabla \omega_\varepsilon) =0 & \text{ for }(t,x)\in \R_+ \times \OM , \\
&v_\varepsilon=\frac1{b_\varepsilon} \nabla^\perp \psi^0_\varepsilon[\omega_\varepsilon]+\sum_{i=1}^N\frac{\gamma^i_{0}+\int_{\OM}b_\varepsilon\omega_\varepsilon\varphi_{\varepsilon}^i \, dx}{b_\varepsilon}\nabla^\perp\psi_{\varepsilon}^i,
 & \text{ for }(t,x)\in \R_+ \times \OM  ,\\
&\omega_\varepsilon  =0 & \text{ for }(t,x)\in \R_+ \times \pd \OM , \\
&\omega_\varepsilon (0,x) = \omega_\varepsilon^0 (x) & \text{ for }x\in  \OM ,
\end{aligned}
\right.
\end{equation}
where $\varepsilon>0$ is arbitrary and $\gamma^i_{0}$ are given independently of $\varepsilon$ and $t$.
The above system is exactly the problem studied in \cite{LOT1}. Indeed the authors work in non-simply connected domains, and Lemma 5 therein is similar to our decomposition (Proposition \ref{decomposition}). In their case, the tangential part $v\cdot \tau$ is clearly defined (as $b_\varepsilon>0$) so their definition of the circulation as an integral along $\partial \Cc^k$ is the same as our weak circulation. In this work, the test functions are compactly supported in $\Omega$: $\varphi \in C^\infty_c([0,\infty) \times \Omega)$. Indeed, for Navier-Stokes equations, the general framework is of $H^{-1}$ to $H^1_{0}(\Omega)$ and test functions in $C^\infty_c([0,\infty) \times \Omega)$ are sufficient because the Dirichlet boundary condition is already encoded by the fact that the velocity (here the vorticity) belongs to $H^1_{0}$. Moreover, we have the ``energy relation'':
\begin{equation}\label{energy}
\|\sqrt{b_{\varepsilon}}\om_\varepsilon(t,\cdot)\|_{L^2(\Omega)}^2 + \e \int_0^t \|\sqrt{b_{\varepsilon}} \nabla \om_\varepsilon(s,\cdot)\|_{L^2(\Omega)}^2  \, ds \leq \| \sqrt{b_{\varepsilon}} \om^0_{\varepsilon}\|_{L^2(\Omega)}^2, \quad \forall t\geq 0.
\end{equation}

Next, the idea is to pass to the limit $\varepsilon\to 0$. Let us perform this limit as follows:
\begin{itemize}
\item by integration by parts and Poincar\'e inequality on $\widetilde \Omega$ we have that (thanks to the tangency condition of $v_{\varepsilon}$):
\[\begin{aligned}
\| \sqrt{b_\varepsilon} v_\varepsilon \|^2_{L^2(\Omega)} &\leq \| b_\varepsilon \omega_\varepsilon \|_{L^2(\Omega)} \|\psi_\varepsilon \|_{L^2(\Omega)} \leq  C_{2}\sqrt{M+\varepsilon} \|  \sqrt{b_\varepsilon} \omega^0_{\varepsilon} \|_{L^2(\Omega)} \| \nabla \psi_\varepsilon \|_{L^2(\Omega)} 
\\& \leq 2C_{2} |\Omega|^\frac{1}{2} (M+1)^{3/2}  \| \omega^0 \|_{L^\infty(\Omega)}  \|  \sqrt{b_\varepsilon} v_\varepsilon \|_{L^2(\Omega)},
\end{aligned}\]
where $\psi_\varepsilon$ is the stream function vanishing on $\partial\widetilde\Omega$ associated to $b_{\varepsilon}v_\varepsilon$:
\[\psi_\varepsilon:=  \psi^0_\varepsilon[\omega_\varepsilon]+\sum_{i=1}^N (\gamma^i_{0}+\int_{\OM}b_\varepsilon\omega_\varepsilon\varphi_{\varepsilon}^i \, dx) \psi_{\varepsilon}^i. \]
Hence $\sqrt{b_\varepsilon} v_\varepsilon$ is uniformly bounded in $L^\infty(\R_{+};L^2(\Omega))$, uniformly in $\e$:
\begin{equation}\label{B}
\Vert\sqrt{b_\varepsilon}v_\varepsilon(t)\Vert_{L^2}\lesssim 1.
\end{equation}
\item for $\varepsilon$ fixed, we easily observe that $\partial_{t} \omega_\varepsilon \in L^2_{\loc}(\R_{+};H^{-1}(\Omega))$ and also that $\omega_\varepsilon \in C(\R_{+};L^2(\Omega))\cap L^2_{\loc}(\R_+;H^2(\Omega))$. Hence one can multiply the vorticity equation by some power of $\omega_{\varepsilon}$ to get for all time:
\[
\| (b_\varepsilon)^{\frac1p} \omega_\varepsilon (t,\cdot)\|_{L^p} \leq \| (b_\varepsilon)^{\frac1p} \omega^0_\varepsilon \|_{L^p}  \leq (M+1)^\frac{1}{p} \|  \omega^0_\varepsilon \|_{L^p} \leq 2[(M+1)(|\Omega|+1)]^\frac{1}{p} \| \omega^0 \|_{L^\infty} \ \forall p\in [1,\infty).
\]
As the constant at the right hand side is uniform in $p$, we infer that
\begin{equation}\label{om eps inf}
 \|  \omega_\varepsilon(t,\cdot) \|_{L^\infty} \leq  2\| \omega^0 \|_{L^\infty}.
\end{equation}
\end{itemize}

Therefore,  Banach-Alaoglu theorem implies that $ \omega_\varepsilon$ converges weak-$*$ to $\om$ in $L^\infty(\R_+\times\OM)$, and $\sqrt{b_{\varepsilon}} v_\varepsilon$ converges weak-$*$ to $\sqrt{b} v$ in $L^\infty(\R_+;L^2(\OM))$. This weak convergence is sufficient to get (i), (ii) and (iii) in Definition \ref{defi-weak-sol}. Moreover, by construction (see \eqref{AppEq}), $\gamma^{i}(v_{\varepsilon}(t))=\gamma^i_{0}$ for all $t\in \R_+$, $i=1\dots N$. Hence, the weak limit is also sufficient to pass to the limit in the circulation definition \eqref{circulation} which implies that the circulations of $v$ are conserved.

To get (iv), we will pass to the limit in equation \eqref{AppEq}, but we need a strong convergence of the velocity.
It would be tempting to use a variant of the Div-Curl lemma on $F_{\e}\cdot G_{\e}$ with $F_{\e}:=b_\e v_\e$ (which is divergence free) and  $G_\e:=v_\e$ (we could prove that $\curl G_{\e}=\om_{\e}$ is precompact in $C([0,T]; H^{-1}_{\loc} (\R^2;\R))$). However, a subtle problem appears when we try to verify the precompactness of $F_\varepsilon$ and $G_{\e}$ in $C([0,T]; H^{-1}_{\loc} (\R^2;\R^2))$ (which is necessary to apply the Div-Curl lemma): because of the absence of boundary conditions on $\mathcal{O}\Subset \OM$,
the mapping $Id:L^2(\mathcal{O})\to \mathcal{V}^\prime(\mathcal{O})$, where $\mathcal{V}^\prime(\mathcal{O})$ is the dual of $\mathcal{V}(\mathcal{O}):=\{v\in H^1_0(\mathcal{O}),\,\,\hbox{div}(v)=0\}$ is not an embedding (indeed, it maps gradients of functions to $0$). 
This prevents us from getting suitable compactness property in $C([0,T]; H^{-1} (\mathcal{O}))$ and forces us to only seek strong convergence on some part of the velocity and to use a hidden cancellation property of the equations. We now turn to the details.
 
Without loss of generality, we may restrict ourselves to $\mathcal{O}$ a smooth simply connected open subset of $\Omega$ such that $\overline{\mathcal{O}}\subset\Omega$. We introduce the Leray projector $\mathbb{P}_{\mathcal{O}}$ from $L^2(\mathcal{O})$ to $\mathcal{H}(\mathcal{O})$ (see \eqref{impermbis} for the definition), i.e. $\mathbb{P}_{\mathcal{O}}$ is the unique operator such that 
\begin{equation}\label{LerayProj}
v_\e = \mathbb{P}_{\mathcal{O}} v_\e + \nabla q_\e,\quad \hbox{div}(\mathbb{P}_{\mathcal{O}}v_\e)=0, \quad (\mathbb{P}_{\mathcal{O}}v_\e)\cdot  \nu\vert_{\partial \mathcal{O}} =0.
\end{equation}
All the details about the Leray projector can be found e.g. in \cite{Galdi}. In particular, it is known that such a projector is orthogonal in $L^2$, hence by (H2) we have that
\begin{equation*}
\|  \mathbb{P}_{\mathcal{O}} v_\e \|_{L^2(\mathcal{O})}^2+ \|  \nabla q_\e \|_{L^2(\mathcal{O})}^2 \leq  \|  v_\e \|_{L^2(\mathcal{O})}^2\leq \theta_{\mathcal{O}}^{-1}\| \sqrt{b_{\e}} v_\e \|_{L^2(\OM)}^2
\end{equation*}
 which implies by \eqref{B} that $\nabla q_\e$ and $\mathbb{P}_{\mathcal{O}} v_\e$ converge  weak-$\ast$  in $L^\infty([0,T];L^2(\mathcal{O}))$ to $\nabla q$ and $\mathbb{P}_{\mathcal{O}} v$, with $v=\mathbb{P}_{\mathcal{O}} v +\nabla q$.
Besides, since $\hbox{curl}(\mathbb{P}_{\mathcal{O}}v_\e)=\hbox{curl}(v_\varepsilon)=b_\varepsilon\omega_\varepsilon$ is uniformly bounded in $L^\infty$ and using \eqref{LerayProj}, we see that $\{\mathbb{P}_{\mathcal{O}} v_\varepsilon(t)\}$ always remain inside a compact set of $L^2(\mathcal{O})$ (indeed, by the standard Calder\'on-Zygmund theorem on $\mathcal{O}$, $\sum_j\Vert \partial_j\mathbb{P}_{\mathcal{O}} v_\e(t)\Vert_{L^2}\lesssim \Vert \hbox{curl}(v_\varepsilon(t))\Vert_{L^2}$).

 As \eqref{AppEq} is verified for test functions $\varphi \in C^\infty_c( \mathcal{O})$ with $\mathcal{O}$ a simply connected smooth domain, like in Proposition \ref{prop A2} in Appendix \ref{sect equiv defi} we infer that we have a velocity type equation:
\begin{equation*}
 -\int_0^\infty\int_{\OM}\Phi \cdot \partial_{t} v_\e \, dxdt +\int_0^\infty \int_{\OM}\om_{\e}b_\e v_\e \cdot \Phi^\perp \, dxdt
 -\e \int_0^\infty \int_{\OM} b_\e \nabla \om_\e \cdot \Phi^\perp \, dxdt =0
\end{equation*}
for all divergence-free $\Phi \in C^\infty_c( \mathcal{O})$. For such a test function, using \eqref{energy}, \eqref{B} and \eqref{om eps inf}, we obtain that
\begin{equation*}
\begin{split}
\left\vert\langle \mathbb{P}_{\mathcal{O}}v_\varepsilon(t_2),\Phi\rangle-\langle \mathbb{P}_{\mathcal{O}}v_\varepsilon(t_1),\Phi\rangle\right\vert&=\left\vert\langle v_\varepsilon(t_2),\Phi\rangle-\langle v_\varepsilon(t_1),\Phi\rangle\right\vert\\
&=\left\vert\int_{t_1}^{t_2}\int_\OM b_\varepsilon\omega_\e v_\e\cdot \Phi^\perp  \, dxdt-\varepsilon\int_{t_1}^{t_2}\int_\OM b_\varepsilon\Phi^\perp\cdot\nabla\omega_\varepsilon \, dxdt\right\vert\\
&\lesssim \Vert\Phi\Vert_{L^2}\big[\sqrt{M+1}\Vert \sqrt{b_{\varepsilon}} v_\varepsilon\Vert_{L^\infty_tL^2}\Vert\omega_\varepsilon\Vert_{L^\infty_{t,x}}\vert t_1-t_2\vert\\
&\quad+\sqrt{\varepsilon(M+1)}\Vert\sqrt{\varepsilon b_\varepsilon}\nabla\omega_\varepsilon\Vert_{L^2_{t,x}}\vert t_1-t_2\vert^\frac{1}{2}\big]\\
&\lesssim  \Vert\Phi\Vert_{L^2}C(\omega^0)\left[\vert t_1-t_2\vert+\sqrt{\varepsilon}\vert t_1-t_2\vert^\frac{1}{2}\right].
\end{split}
\end{equation*}
By density, we note that the above estimates is true for $\F\in \mathcal{H}(\mathcal{O})$. Therefore, for any $\F \in L^2(\mathcal{O})$, we write that
\[
\langle \mathbb{P}_{\mathcal{O}}v_\varepsilon(t),\Phi\rangle = \langle \mathbb{P}_{\mathcal{O}}v_\varepsilon(t),\mathbb{P}_{\mathcal{O}}  \Phi + \nabla q_{\Phi} \rangle= \langle \mathbb{P}_{\mathcal{O}}v_\varepsilon(t),\mathbb{P}_{\mathcal{O}}  \Phi \rangle
\]
because $ \mathbb{P}_{\mathcal{O}}v_\varepsilon(t)\cdot  \nu \vert_{\partial \mathcal{O}}=0$ and $\div \mathbb{P}_{\mathcal{O}}v_\varepsilon(t)=0$. Hence, the above estimate can be used to get for any $\F \in L^2(\mathcal{O})$
\begin{equation*}
\begin{split}
\left\vert\langle \mathbb{P}_{\mathcal{O}}v_\varepsilon(t_2),\Phi\rangle-\langle \mathbb{P}_{\mathcal{O}}v_\varepsilon(t_1),\Phi\rangle\right\vert &\lesssim  \Vert\mathbb{P}_{\mathcal{O}} \Phi\Vert_{L^2}C(\omega^0)\left[\vert t_1-t_2\vert+\sqrt{\varepsilon}\vert t_1-t_2\vert^\frac{1}{2}\right]\\
&\lesssim  \Vert \Phi\Vert_{L^2}C(\omega^0)\left[\vert t_1-t_2\vert+\sqrt{\varepsilon}\vert t_1-t_2\vert^\frac{1}{2}\right]
\end{split}
\end{equation*}
which implies that the family $\{\mathbb{P}_{\mathcal{O}}v_\varepsilon\}$ is equicontinuous in $L^2(\mathcal{O})$. Since we have seen that it takes values in a compact set, Arzela-Ascoli gives us the precompactness of $\{\mathbb{P}_{\mathcal{O}}v_\e\}$ in $C([0,T];L^2(\mathcal O))$.

Finally, we can now pass to the limit in \eqref{AppEq}. We recall that for any $\varphi \in C^\infty_c([0,T)\times\mathcal{O})$, the first equation in \eqref{AppEq} reads 
\begin{equation}\label{AppEq-re}
\begin{split}
0=&\int_{0}^\infty \int_{\OM} \Big [ \varphi_{t}b_\varepsilon  \omega_\varepsilon +  b_\varepsilon \omega_\varepsilon v_\varepsilon    \cdot \nabla\varphi  - \varepsilon   b_\varepsilon \nabla \omega_\varepsilon \cdot\nabla \varphi \Big] \, dxdt + \int_{\OM} \varphi(0,x)b_\varepsilon  \omega_\varepsilon(x) \, dx.\\
\end{split}
\end{equation}
Clearly, thanks to \eqref{energy}, we can pass to the limit as $\varepsilon \to 0$ in all the (linear) terms except the nonlinear term: $b_\varepsilon \omega_\varepsilon v_\varepsilon    \cdot \nabla\varphi $. For the remaining term, using the relation \eqref{algebra}, we get
\begin{equation*}
\begin{split}
\int_{0}^\infty \int_{\OM}  b_\varepsilon \omega_\varepsilon v_\varepsilon    \cdot \nabla\varphi  \, dxdt  & =\int_{0}^\infty \int_{\OM}  (\curl v_\varepsilon) v_\varepsilon^\perp    \cdot \nabla^\perp\varphi  \,dxdt \\
& = \int_{0}^\infty \int_{\OM} \Big[  \div(b_{\varepsilon}v_{\varepsilon}\otimes v_{\varepsilon})\cdot \frac{\nabla^\perp \varphi}{b_{\varepsilon}}  -\frac12 \nabla |v_{\varepsilon}|^2   \cdot \nabla^\perp\varphi \Big ]  \, dxdt \\
&=  \int_{0}^\infty \int_{\OM} \div(b_{\varepsilon}v_{\varepsilon}\otimes v_{\varepsilon})\cdot \frac{\nabla^\perp \varphi}{b_{\varepsilon}} \, dxdt .
\end{split}
\end{equation*}
In addition, we can write
\[
b_\e v_\e \otimes v_\e  = b_\e \mathbb{P}_{\mathcal O} v_\e \otimes v_\e + b_\e \nabla q_\e \otimes \mathbb{P}_{\mathcal O} v_\e + b_\e \nabla q_\e \otimes \nabla q_\e, \]
in which $\mathbb{P}_{\mathcal{O}}$ is the Leray projector defined as above. The integration involving the first two terms on the right hand side converges to its limit by taking integration by parts and using a weak-strong convergence argument. For the last term, we further compute:
\[\div[b_\e \nabla q_\e \otimes \nabla q_\e] =\frac{b_\varepsilon}2 \nabla(|\nabla q_{\varepsilon}|^2)+(\nabla b_{\varepsilon}\cdot \nabla q_{\varepsilon}+b_{\varepsilon}\Delta q_{\varepsilon})\nabla q_{\varepsilon}.
\]
Here we note from \eqref{LerayProj} that $\div v_{\varepsilon}=\Delta q_{\varepsilon}$, and as $\div b_{\varepsilon}v_{\varepsilon}=0$, we get that $\Delta q_{\varepsilon}=-\frac{\nabla b_{\varepsilon}}{b_{\varepsilon}}\cdot v_{\varepsilon}$. Hence, we have 
\[
\div[b_\e \nabla q_\e \otimes \nabla q_\e] = \frac{b_\e}2 \nabla ( |\nabla q_\e |^2)-(\nabla b_\e \cdot  \mathbb{P}_{\mathcal O} v_\e) \nabla q_\e.
\]
This yields 
$$\begin{aligned}
\int_{0}^\infty & \int_{\OM}   \div(b_{\varepsilon}v_{\varepsilon}\otimes v_{\varepsilon})\cdot \frac{\nabla^\perp \varphi}{b_{\varepsilon}} \, dxdt  = \int_{0}^\infty \int_{\OM}\Big[ \frac12\nabla ( |\nabla q_\e |^2)\cdot \nabla^\perp \varphi  + (\nabla b_\e \cdot  \mathbb{P}_{\mathcal O} v_\e) \nabla q_\e \cdot\frac{\nabla^\perp \varphi}{b_{\varepsilon}}  \Big] \, dxdt 
\end{aligned}$$
in which the first integral vanishes, whereas the second integral passes to the limit again by a weak-strong convergence argument. 

By putting these altogether into \eqref{AppEq-re} and using the same algebra as just performed, it follows in the limit that
\begin{equation}\label{limit-AppEq}
\int_{0}^\infty \int_{\OM} \varphi_{t}b  \omega \, dxdt + \int_{0}^\infty \int_{\OM} \nabla\varphi\cdot vb\omega \, dxdt 
+ \int_{\OM} \varphi(0,x)b  \omega^0(x) \, dx=0\end{equation}
for all $\varphi \in C^\infty_c([0,T)\times \mathcal{O})$. Recall that $\mathcal{O}$ was an arbitrary smooth simply connected domain in $ \OM$. This proves that the identity \eqref{limit-AppEq} holds for all $\varphi \in C^\infty_c([0,\infty)\times \OM)$.

To conclude, we have shown that $(v,\omega)$ is an interior weak solution of the lake equations in the vorticity formulation, which completes the proof of Lemma \ref{WeakIntSol}.


\subsection{Well-posedness of a global weak solution}\label{sec-uniqueness}

In this subsection, we use the Calder\'on-Zygmund inequality \eqref{CZ} of Bresch and M\'etivier \cite{BM} to upgrade our solution $(v,\omega)$ to a weak solution in the vorticity formulation, which is then equivalent to a weak solution in the velocity formulation. Using again \eqref{CZ}, we prove that weak solutions in the velocity formulation are unique, which ends the proof.

\noindent

\noindent{\it Gain of regularity for smooth lakes.} First, we recall the main result of Bresch and M\'etivier in \cite{BM}: if the lake is smooth with constant slopes, then we have a Calder\'on-Zygmund  type inequality. Namely:

\begin{proposition}[{\cite[Theorem 2.3]{BM}}]\label{prop CZ} Let $(\Omega,b)$ be a smooth lake in the sense of \eqref{smooth lake}. Let $f\in L^p(\Omega)$ for $p>4$ and $bv\in L^2(\Omega)$. If the triplet $(b,v,f)$ verifies the following elliptic problem
 \[ \div(b v)=0 \text{ in } \Omega, \quad bv\cdot \nu = 0 \text{ on } \partial\Omega, \quad \text{(in the sense of \eqref{imperm})}\]
and
\[ \curl v = f   \text{ in } \mathcal{D}'(\OM),\]
 then $v\in C^{1-\frac2p}(\overline{\Omega})$ and $\nabla v\in L^p(\Omega)$. Moreover, there exists a constant $C$ which depends only on $\Omega$ and $b$ so that  for any $p>4$ 
\begin{equation}\label{CZ}
 \| \nabla v \|_{L^p(\Omega)} \leq Cp\Bigl(\| f \|_{L^p(\Omega)}+ \| bv \|_{L^2(\Omega)}\Bigl).
\end{equation}
In addition,
\[
v \cdot \nu =0 \text{ on } \partial \OM.
\]
\end{proposition}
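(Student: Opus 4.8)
In the simply connected case this is exactly \cite[Theorem 2.3]{BM}, so the only genuinely new point is to allow the islands $\Cc^1,\dots,\Cc^N$, i.e. to pass from a simply connected lake to a multiply connected one. The plan is to observe that the Calder\'on--Zygmund bound of \cite{BM} is \emph{local} near the boundary, hence insensitive to the global topology, and to glue the local estimates by a finite partition of unity subordinate to the boundary collars $\mathcal{O}^0,\dots,\mathcal{O}^N$ of \eqref{BForm}.

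First I would put the system in divergence form. By Lemma \ref{lem div}, the constraints $\div(bv)=0$ and $bv\cdot\nu=0$ give $bv=\nabla^\perp\psi$ with $\psi\in H^1_0(\widetilde\OM)$ constant on each connected component of $\partial\OM$, so that $v=b^{-1}\nabla^\perp\psi$ and the equation $\curl v=f$ becomes the degenerate elliptic equation $\div(b^{-1}\nabla\psi)=f$ on $\OM$. On any compact $K\subset\OM$ the weight is harmless, $b\geq\theta_K>0$ by (H2), the operator $\div(b^{-1}\nabla\,\cdot\,)$ is uniformly elliptic with $C^3$ coefficients there, and classical interior elliptic regularity gives $\|\nabla v\|_{L^p(K)}\lesssim p(\|f\|_{L^p}+\|bv\|_{L^2})$. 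The content is therefore concentrated in a neighborhood of $\partial\OM$, which by (H1) is a finite disjoint union of $C^3$ Jordan curves $\partial\widetilde\OM,\partial\Cc^1,\dots,\partial\Cc^N$; near each of them (H3) provides the structure $b=c\,d^{a_k}$ with $c\ge\theta>0$ and constant slope, and this is precisely the hypothesis under which \cite{BM} establishes weighted boundary estimates for $\div(b^{-1}\nabla\,\cdot\,)$ in a one-sided collar $\mathcal{O}^k\cap\OM$. Since each such estimate only sees one boundary curve, it is unaffected by the presence of the other islands, and summing the interior and the finitely many boundary contributions through a partition of unity produces the global bound \eqref{CZ} with the same linear growth in $p$ inherited from \cite{BM} (this growth reflects the operator norm of the singular integrals in the Calder\'on--Zygmund theory).

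Once $\nabla v\in L^p(\OM)$ with $p>4$ is in hand, the remaining conclusions are standard. The two-dimensional Morrey embedding $W^{1,p}(\OM)\hookrightarrow C^{1-2/p}(\overline\OM)$ gives $v\in C^{1-2/p}(\overline\OM)$, so $v$ is continuous up to the boundary. For the tangency, $\psi$ is constant on each boundary component, hence $\partial_\tau\psi=0$ and $\nabla^\perp\psi$ is parallel to $\tau$ along $\partial\OM$; as $v=b^{-1}\nabla^\perp\psi$ is now a genuine continuous field, it is tangent to $\partial\OM$, i.e. $v\cdot\nu=0$, and this holds even in the vanishing-topography case $a_k>0$ because the continuity of $v$ is obtained independently of whether $b$ degenerates.

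The main obstacle is entirely contained in the boundary estimate of \cite{BM} that I use as a black box: when $a_k>0$ the weight $b^{-1}=c^{-1}d^{-a_k}$ blows up at the shore, standard elliptic regularity fails, and one must control a genuinely degenerate operator. All the rest---reconstructing $\psi$, the interior bound, the localization and the Sobolev embedding---is routine once that weighted estimate, together with its explicit $Cp$ constant, is granted.
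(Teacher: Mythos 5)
Your proposal matches the paper's own treatment: the paper likewise takes \cite[Theorem 2.3]{BM} as the source of the estimate, describing exactly the decomposition you use --- classical elliptic regularity in the interior where $b$ is bounded below, the degenerate weighted boundary estimates of \cite{BM} in a collar of each $C^3$ boundary curve after flattening, cut-off functions whose gradients account for the $\| bv \|_{L^2}$ term on the right-hand side, and the observation that several islands with possibly different exponents $a_k$ are handled by the same localization. The remaining steps (Morrey embedding for the H\"older continuity and the tangency $v\cdot\nu=0$) are standard and consistent with what the paper asserts.
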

 This inequality is well known in the case of non-degenerating topography ($b\geq \theta_{0}>0$) and it was extended by Bresch and M\'etivier in the case of a depth which vanishes at the shore like $d(x)^a$ for $a>0$.
 The authors decompose the domain in two pieces: one which is far from the boundary where they use classical elliptic estimates, and one near the boundary. As for the latter piece, they flatten the boundary and are reduced to study a degenerate elliptic equation with coefficients vanishing at the boundary of a half-plane. 

This decomposition in several subdomains explains why we have the terms $\|bv\|_{L^2}$ in the right hand side part of the Calder\'on-Zygmund inequality \eqref{CZ}, coming from the support of the gradient of some cut-off functions. We remark also that we can easily have some islands with vanishing (where $a_{k}$ can be different from $a_{0}$) or non vanishing topography, which gives a lake where the Calder\'on-Zygmund inequality holds true. 

\medskip

By Lemma \ref{WeakIntSol} there exists $(v,\omega)$ verifying the elliptic problem ii)-iii) in Definition \ref{defi-weak-sol}. Then, Proposition \ref{prop CZ} states that $\nabla v$ belongs to $L^p$ for any $p>4$. This estimate is crucial to prove that $(v,\omega)$ is actually a global weak solution to the vorticity formulation (Proposition \ref{prop A4}), which is also a global weak solution to the velocity formulation (Proposition \ref{prop A3}), because the circulations are conserved. The Calder\'on-Zygmund inequality will be also the key for the uniqueness. 

By using the renormalized solutions in the sense of DiPerna-Lions, it follows that $\omega\in C([0,\infty);L^p(\Omega))$ and $v\in C([0,\infty);W^{1,p}(\Omega))$ for any $p>4$ (see the proof of Lemma \ref{lem-vorticity} for details about the renormalized theory).

\bigskip

\noindent{\it Uniqueness.} The uniqueness part now follows from the celebrated proof of Yudovich \cite{yudo}. Let $v_{1}$ and $v_{2}$ be two weak global solutions for the same initial $v^0$. We introduce $\tilde v:=v_{1}-v_{2}$. As $\tilde v$ belongs to $W^{1,p}$ for any $p\in (4,\infty)$, we get from the velocity formulation some estimates for $\partial_{t}\tilde v$. This allows us to replace the test function by $b\tilde v=\mathbb{P}_{\OM}(b\tilde{v})\in C^1([0,T];L^\frac{5}{4}(\Omega))$. As $\tilde v\in  C\left(\R_+, L^5(\RR)\right)$, we get for all $T\in\R^+$
\begin{equation*}
\| \sqrt{b} \tilde v(T)\|_{L^2(\Omega)}^2=2\int_0^T \langle \pd_t (b\tilde v), \tilde v \rangle_{L^\frac{5}{4}\times L^5}\,ds\leq 2\int_0^T \int_{\Omega} |\sqrt{b} \tilde v(s,x)| |\nabla v_{2}(s,x)|  |\sqrt{b} \tilde v(s,x)| \, dxds
\end{equation*} 
where we have used that $\div bv_{1}=\div b\tilde v=0$. Next, we use the Calder\'on-Zygmund inequality \eqref{CZ} on $\nabla v_{2}$ to infer by interpolation that 
\[ \|  \sqrt{b} \tilde v (T,\cdot)\|_{L^2}^2 \leq 2Cp\int_0^T \| \sqrt{b} \tilde v\|_{L^2}^{2-2/p}\,dt.\]
Together with a Gronwall-like argument, this implies
\begin{equation*}
\|  \sqrt{b} \tilde v (T,\cdot)\|_{L^2}^2  \leq (2CT)^p,\qquad \forall p\geq 2.
\end{equation*}
Letting $p$ tend to infinity, we conclude that $\| \sqrt{b} \tilde v (T,\cdot)\|_{L^2} = 0$ for all $T<1/(2C)$. Finally, we consider the maximal interval of $[0,\infty)$ on which $\| \sqrt{b} \tilde v (T,\cdot)\|_{L^2} \equiv 0$, which is closed by continuity of $\| \sqrt{b} \tilde v (T,\cdot)\|_{L^2}$. If it is not equal to the whole of $[0,\infty)$, we may repeat the above proof, which leads to a contradiction by maximality. Therefore uniqueness holds on $[0,\infty)$, and this concludes the proof of well-posedness. 
\bigskip

\noindent
{\it Constant circulation.}
If the domain is not simply connected, we have proved in the first subsection that the vorticity alone is not sufficient to determine the velocity uniquely, and that we need to fix the weak circulation to  derive the Biot-Savart law. In the following section, the main idea is to prove compactness in each terms in this Biot-Savart law. Therefore, it is crucial to establish the Kelvin's theorem in our case, namely the weak circulations are conserved. Fortunately, this is valid in a great generality following Proposition \ref{circ const} as follows.

\begin{proposition}\label{circ const} Let $(\Omega,b)$ be a lake satisfying (H1)-(H2) with $b\in W^{1,\infty}_{\loc}(\Omega)$. Let $v$ be a global interior weak solution of the velocity formulation and a global weak solution of the vorticity formulation. Then for each $k = 1,\cdots,N$, the generalized circulation $\gamma^k$ defined as in \eqref{circulation} is independent of $t$.
\end{proposition}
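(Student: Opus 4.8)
The plan is to prove that the map $t\mapsto \gamma^k(v(t))$ has vanishing distributional derivative on $(0,\infty)$, and is in fact constant. I first observe that this map is well defined and bounded: writing $\curl v = b\om$ in \eqref{circulation},
\begin{equation*}
\gamma^k(v(t)) = -\int_\OM \nabla^\perp\chi^k\cdot v(t)\,dx - \int_\OM \chi^k\, b\om(t)\,dx =: A(t)+B(t),
\end{equation*}
where $\supp\nabla\chi^k$ is a compact subset of $\OM$ on which $b\ge\theta>0$ by (H2); thus condition (i) of Definitions \ref{defi-weak-velocity}--\ref{defi-weak-sol} (namely $\sqrt b\,v\in L^\infty(\R_+;L^2)$ and $\om\in L^\infty$) gives $A,B\in L^\infty(\R_+)$.

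The idea is to feed the two natural ``circulation'' test objects into the two weak formulations, modulated in time. Fix $\beta\in C^\infty_c((0,\infty))$. Since $\chi^k$ is constant on each connected component of $\partial\OM$, the function $\beta\chi^k$ is admissible in the vorticity formulation \eqref{eq-vort}; and since $\nabla^\perp\chi^k$ is smooth, divergence free and compactly supported in $\OM$, the function $\beta\nabla^\perp\chi^k$ is admissible in the interior velocity formulation \eqref{eq-vel} (the assumption $b\in W^{1,\infty}_{\loc}(\OM)$ is what makes $\nabla(\beta\nabla^\perp\chi^k/b)$ meaningful on this compact support). Substituting $\varphi=\beta\chi^k$ in \eqref{eq-vort} and $\Phi=\beta\nabla^\perp\chi^k$ in \eqref{eq-vel}, the initial-data terms vanish because $\beta(0)=0$, and I obtain
\begin{equation*}
-\int_0^\infty \beta' B\,dt + \int_0^\infty \beta\, N_1\,dt = 0, \qquad -\int_0^\infty \beta' A\,dt + \int_0^\infty \beta\, N_2\,dt = 0,
\end{equation*}
with the spatial fluxes $N_1(t):=\int_\OM \nabla\chi^k\cdot v\, b\om\,dx$ and $N_2(t):=\int_\OM (bv\otimes v):\nabla(\nabla^\perp\chi^k/b)\,dx$. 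Adding these and using $A+B=\gamma^k(v)$ yields
\begin{equation*}
-\int_0^\infty \beta'\,\gamma^k(v(t))\,dt + \int_0^\infty \beta\,(N_1+N_2)\,dt = 0.
\end{equation*}

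Everything then hinges on the pointwise-in-$t$ cancellation $N_1(t)+N_2(t)=0$, which is precisely the hidden algebraic identity \eqref{algebra} already exploited in Section \ref{sect 2.2}. Formally, since $\div(bv)=0$ one has $\div(bv\otimes v)=(bv\cdot\nabla)v$, so an integration by parts gives $N_2=-\int_\OM (v\cdot\nabla)v\cdot\nabla^\perp\chi^k\,dx$; combining the planar identity $(v\cdot\nabla)v=\tfrac12\nabla|v|^2+(\curl v)v^\perp$ with $\int_\OM \nabla|v|^2\cdot\nabla^\perp\chi^k\,dx=0$ (because $\div\nabla^\perp\chi^k=0$) and the pointwise relation $v\cdot\nabla\chi^k=v^\perp\cdot\nabla^\perp\chi^k$ shows $N_2=-\int_\OM(\curl v)v^\perp\cdot\nabla^\perp\chi^k\,dx=-N_1$. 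Granting this, $\int_0^\infty\beta'\gamma^k(v(t))\,dt=0$ for all $\beta$; moreover the two displayed identities read $A'=-N_2$ and $B'=-N_1$ in the distributional sense, and since $N_1,N_2\in L^\infty(\R_+)$ the map $\gamma^k(v(\cdot))=A+B$ is Lipschitz with derivative $-(N_1+N_2)=0$, hence genuinely constant in $t$.

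I expect the only real difficulty to be the rigorous justification of $N_1=-N_2$ at the regularity of a mere weak solution, where $(v\cdot\nabla)v$ and $\nabla|v|^2$ are not classically defined. As all the integrands are supported in the fixed compact set $\supp\nabla\chi^k\Subset\OM$, on which $v\in L^2$, $b\om\in L^\infty$, and (by (H2) together with $b\in W^{1,\infty}_{\loc}$) $b$ is Lipschitz and bounded below, the cancellation is obtained by mollifying $v$, performing the above manipulations on the smooth approximation, and letting the mollification parameter tend to zero, the commutator errors vanishing by the standard DiPerna--Lions argument with the Lipschitz regularity of $b$ being exactly what is needed. Alternatively, one may simply invoke the equivalence of the nonlinear terms of the velocity and vorticity formulations established in Appendix \ref{sect equiv defi}. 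The remaining points---admissibility of the two test functions and the $L^\infty$ bound on $\gamma^k(v(\cdot))$---are routine consequences of (H1)--(H2).
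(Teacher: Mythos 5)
Your proposal is correct and follows essentially the same route as the paper's proof: test the vorticity formulation \eqref{eq-vort} with $\beta(t)\chi^k$ and the interior velocity formulation \eqref{eq-vel} with $\beta(t)\nabla^\perp\chi^k$, add the resulting identities, and observe that the two nonlinear fluxes cancel via \eqref{algebra} together with $\int_\Omega\nabla|v|^2\cdot\nabla^\perp\chi^k\,dx=0$ (the paper's $\widetilde{\chi}^k$ trick). The only, easily repaired, difference is that you restrict to $\beta\in C^\infty_c((0,\infty))$, which yields constancy of $\gamma^k$ on $(0,\infty)$ but does not identify the constant with $\gamma^k(v^0)$; the paper keeps $l(0)\neq 0$ so that the initial-data terms of the two weak formulations pin down the value at $t=0$ as well.
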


\begin{proof}

Let $l(t)\in C^\infty_c([0,\infty))$, note that since $\nabla^\perp\chi^k\equiv0$ in a neighborhood of the boundary, then $l(t)\nabla^\perp\chi^k(x)$ is a test function for which \eqref{eq-vel} is verified. As  $\chi^k$ is constant in each neighborhood of the boundary, $l(t)\chi^k(x)$ is a test function for which \eqref{eq-vort} holds. Then, we can compute
\begin{equation*}
\begin{split}
\int_{\mathbb{R}}\gamma^k(t)\frac{d}{dt}l(t)dt - \gamma^k(0)l(0)
&=-\int_{\mathbb{R}}\int_{\OM}\frac{d}{dt}\left[l(t)\nabla^\perp\chi^k\right]\cdot v\, dxdt-\int_{\mathbb{R}}\int_{\OM}\frac{d}{dt}\left[l(t)\chi^k\right]b\omega \, dxdt- \gamma^k(0)l(0)\\
&=\int_{\mathbb{R}}\int_{\OM}\left\{(bv\otimes v):\nabla\left[\frac{1}{b}\nabla^\perp\chi^k\right]+\nabla\chi^k\cdot v\,\hbox{curl}(v)\right\}l(t)\, dxdt\\
&\qquad+l(0)\int_{\OM}\left\{v^0\cdot\nabla^\perp\chi^k+\chi^k\hbox{curl}(v^0)\right\}\, dx- \gamma^k(0)l(0)\\
&=\int_{\mathbb{R}}\int_{\OM}\left\{(bv\otimes v):\nabla\left[\frac{1}{b}\nabla^\perp\chi^k\right]+\nabla\chi^k\cdot v\,\hbox{curl}(v)\right\}l(t)\, dxdt.
\end{split}
\end{equation*}
Using the fact that $\hbox{div}(bv)=0$ and $\nabla^\perp\chi^k\equiv0$ in a neighborhood of the boundary, we may integrate by parts and use \eqref{algebra} to have
\begin{equation*}
\begin{split}
\int_{\mathbb{R}}\gamma^k(t)\frac{d}{dt}l(t)dt - \gamma^k(0)l(0)
&=-\int_{\mathbb{R}}l(t)\int_{\OM}\nabla^\perp\chi^k\cdot\nabla\frac{\vert v\vert^2}{2}\, dxdt.
\end{split}
\end{equation*}
Now, we let $\widetilde{\chi}^k$ be a smooth function, compactly supported inside $\Omega$ and  such that $\widetilde{\chi}^k\nabla\chi^k=\nabla\chi^k$. Integrating by parts, we then find that
\begin{equation*}
\begin{split}
\int_{\OM}\nabla^\perp\chi^k\cdot\nabla\frac{\vert v\vert^2}{2}\, dx&=\int_{\OM}\nabla^\perp\chi^k\cdot\nabla\left[\widetilde{\chi}^k\frac{\vert v\vert^2}{2}\right]\, dx=-\int_{\OM}\hbox{div}\left\{\nabla^\perp\chi^k\right\}\widetilde{\chi}^k\frac{\vert v\vert^2}{2}\, dx=0.
\end{split}
\end{equation*}
This finishes the proof.
\end{proof}

\section{Proof of the convergence}\label{sect 3}

 \def\vn{{\bf n}} 
 
In this section, we shall prove our main result (Theorem \ref{theo-main}). Here, we recall our main assumption that $(\OM_n,b_n)$ converges to the lake  $(\OM,b)$ as $n \to \infty$ in the sense of Definition \ref{def-ConvLake}. Let us denote by $D$ a large open ball such that $D$ contains $\Omega$ and $\Omega_n$, and extend the bottom functions $b$ and $b_n$ to zero on the sets $D\setminus \Omega$ and $D\setminus \Omega_n$, respectively. 

\medskip

We prove the main theorem via several steps. First, from the velocity equation, it is relatively easy to obtain an a priori bound on $\sqrt {b_n} v_n$ in $L^\infty(\R_+; L^2)$ (here one needs uniform estimates on $\sqrt{b_n}v^0_n$ in $L^2(\Omega_n)$). Unfortunately, such a bound is too weak to give any reasonable information on the possible limiting velocity solution $v$. To obtain sufficient compactness, we derive estimates on the stream function $\psi_n$, defined by
\begin{equation}\label{def-streamfn}
v_n=\frac1{b_n} \na^\perp \p_n.
\end{equation}
The Biot-Savart law \eqref{BiotSavart} which is established in Proposition \ref{decomposition} gives
\begin{equation}\label{BS-law} 
\psi_n (t,x)= \psi_n^0(t,x) + \sum_{k=1}^N \alpha_n^k(t) \psi_n^k(x), 
\end{equation}
where for each $n$, $\psi_n^0$ solves $\div(b_n^{-1}\na \p^0_n) = b_n \om_n$ with the Dirichlet boundary condition on $\partial\Omega_n$, and the so-called simili harmonic functions $\psi_n^k$ solve $\div(b_n^{-1} \na \p^k_n) = 0$ and have their circulations equal to $\delta_{jk}$ around each island $\mathcal{C}_n^j$, $j = 1,\cdots,N$. The real numbers $\alpha_{n}^k(t)$ are given by 
$$\alpha^k_{n}(t)=\gamma^k_{n}+\int_{\OM}b_{n}(x)\omega_{n}(t,x)\varphi^k_n(x) \, dx$$ where $\gamma^k_n=\gamma^k(v_{n})$ is the circulation of $v_n$  around each $\mathcal{C}_n^k$ introduced as in \eqref{circulation}, which is constant in time (see Proposition \ref{circ const}).

\subsection{Vorticity estimates} We begin by deriving some basic estimates on the vorticity $\omega_n$. 
\begin{lemma}\label{lem-vorticity} For each $n$, the $L^p$ norm of $b_n^{1/p}\omega_n$ is conserved in time and uniformly bounded for all $p\ge 1$, that is,
\begin{equation*}
 \|b_n^{1/p} \omega_n(t)\|_{L^p(\Omega_n)} =  \|b_n^{1/p} \omega^0_n\|_{L^p(\Omega_n)}\lesssim\Vert\omega^0_n\Vert_{L^\infty}\lesssim 1, \qquad \forall t\ge 0. \end{equation*}
In addition, $\omega_n$ is bounded in $L^\infty_{x,t}$, uniformly in $n$.  
\end{lemma}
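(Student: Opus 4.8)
The plan is to establish the conservation identity for $\|b_n^{1/p}\omega_n\|_{L^p}$ directly from the transport structure of the vorticity equation, and then to extract the uniform $L^\infty$ bound as a consequence. The governing equation is \eqref{transport}, namely $\partial_t(b_n\omega_n)+\div(b_n v_n\omega_n)=0$, together with the incompressibility constraint $\div(b_n v_n)=0$. The formal computation is the familiar one: for a smooth increasing function $\beta$, multiply the transport equation by $\beta'(\omega_n)$ and use the chain rule to obtain $\partial_t(b_n\beta(\omega_n))+\div(b_n v_n\beta(\omega_n))=0$; integrating over $\Omega_n$ and using the tangency condition $b_nv_n\cdot\nu=0$ makes the divergence term vanish, so $\frac{d}{dt}\int_{\Omega_n}b_n\beta(\omega_n)\,dx=0$. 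Choosing $\beta(s)=|s|^p$ yields exactly $\|b_n^{1/p}\omega_n(t)\|_{L^p}^p=\int_{\Omega_n}b_n|\omega_n|^p\,dx=\text{const}=\|b_n^{1/p}\omega_n^0\|_{L^p}^p$.

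First I would justify this computation rigorously rather than formally. The issue is that the weak interior solution $\omega_n$ produced in Lemma \ref{WeakIntSol} a priori only satisfies the transport equation in the sense of distributions \eqref{eq-vort}, so the chain rule is not automatic. The natural device is the theory of renormalized solutions of DiPerna--Lions: since (on the smooth lake, by \eqref{smooth lake}) we have the gain of regularity $v_n\in C(\R_+,W^{1,r}(\Omega_n))$ for every finite $r$ from Proposition \ref{prop CZ}, the velocity field is sufficiently regular (in particular $v_n\in W^{1,1}_{\loc}$ with $\div(b_nv_n)=0$) that the DiPerna--Lions theory applies and guarantees that $\omega_n$ is a renormalized solution. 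This legitimizes the identity $\partial_t(b_n\beta(\omega_n))+\div(b_n v_n\beta(\omega_n))=0$ for admissible $\beta$, and hence the conservation of $\int_{\Omega_n}b_n|\omega_n|^p\,dx$ for each finite $p$.

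The bounds claimed on the right then follow by elementary estimates. Using $b_n\le M_n$ (Assumption (H2), uniform in $n$ by Definition \ref{def-ConvLake}) we get $\|b_n^{1/p}\omega_n^0\|_{L^p}\le M_n^{1/p}\|\omega_n^0\|_{L^p}$, and combining $|\Omega_n|\le|D|$ with $\|\omega_n^0\|_{L^\infty}\le M_0$ from \eqref{ConvHyp1} gives $\|\omega_n^0\|_{L^p}\le(|D|)^{1/p}M_0$; since $M_n$ is uniformly bounded, the resulting constant $(M_n|D|)^{1/p}M_0$ is bounded uniformly in both $p$ and $n$. This gives the chain of inequalities $\|b_n^{1/p}\omega_n(t)\|_{L^p}\lesssim\|\omega_n^0\|_{L^\infty}\lesssim 1$. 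To obtain the final $L^\infty$ assertion, I would let $p\to\infty$: the conservation identity combined with $b_n(x)\ge\theta_K>0$ on compacts $K$ yields $\theta_K^{1/p}\|\omega_n(t)\|_{L^p(K)}\le\|b_n^{1/p}\omega_n(t)\|_{L^p(\Omega_n)}\le(M_n|D|)^{1/p}M_0$, and sending $p\to\infty$ gives $\|\omega_n(t)\|_{L^\infty(K)}\le M_0$ on every compact $K$, hence $\|\omega_n(t)\|_{L^\infty(\Omega_n)}\le M_0$ uniformly in $n$ and $t$.

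The \emph{main obstacle} is purely at the level of rigor rather than of the underlying idea: one must verify that the weak interior solution is genuinely renormalized, i.e. that the degeneracy of $b_n$ at the boundary does not obstruct the DiPerna--Lions machinery. Because the estimate is localized (one works on compacts where $b_n$ is bounded below) and the velocity regularity from Proposition \ref{prop CZ} is uniform on the smooth lake, this can be handled, but it is the step that requires genuine care; the algebraic conservation computation itself is routine.
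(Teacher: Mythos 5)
Your proposal is correct and follows essentially the same route as the paper: both invoke the DiPerna--Lions renormalization theory (justified by the $W^{1,r}$ regularity from Proposition \ref{prop CZ}), approximate $s\mapsto |s|^p$ by admissible renormalizers, and kill the divergence term via the tangency condition $b_nv_n\cdot\nu=0$. The only (harmless) divergence is in the final $L^\infty$ step, where the paper chooses a renormalizer vanishing on $[-2\|\omega^0\|_{L^\infty},2\|\omega^0\|_{L^\infty}]$ and positive elsewhere, whereas you send $p\to\infty$ in the $L^p$ bounds using the interior lower bound $b_n\ge\theta_K$ on compacts; both arguments are valid and yield the same uniform bound.
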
 
\begin{proof} We recall that the vorticity $\omega_n$ solves \eqref{transport} in the distributional sense and belongs to $L^\infty (\R_+ \times \Omega_n)$ . Thanks to Proposition \ref{prop CZ} we deduce that the velocity is regular enough to apply the renormalized theory in the sense of DiPerna-Lions:  let $f:\R \rightarrow \R$ be a smooth function such that
\begin{equation*}
|f'(s)|\leq C(1+ |s|^p),\qquad \forall t\in \R,
\end{equation*}
for some $p\geq 0$, then $f(\om_n)$ is a solution of the transport equation  \eqref{transport} (in the sense of distribution) with initial datum $f(\om_0)$.

By smooth approximation of $s\mapsto |s|^p$ for $1\leq p<\infty$, the renormalized solutions yields 
\[ \frac{d}{dt}(b_n |\om_n|^p) = - b_n v_n \na |\om_n|^p=- \div (b_n v_n |\om_n|^p).\]
Integrating this identity over $\Omega_n$ and using the Stokes theorem, we get 
\[ \frac{d}{dt}\int_{ \Omega_n}b_n(x) |\om_n|^p(t,x) \, dx =  - \int_{\partial\Omega_n} b_n v_n \cdot {\nu} |\omega_n|^p d\sigma_n(x),\]
where the boundary term vanishes due to the boundary condition on the velocity (see \eqref {lake-eqs}). The lemma is proved for $1\leq p<\infty$.

The case $p=\infty$ is easily obtained by taking $f$ a function vanishing on the interval $[-2\| \omega_0\|_{L^\infty}, 2\| \omega_0\|_{L^\infty}]$ and strictly positive elsewhere. Indeed, it shows that the $L^\infty$ norm cannot increase, and by time reversibility that it is constant.
\end{proof}

Lemma \ref{lem-vorticity} in particular yields that the vorticity $\omega_n$ is bounded in $L^\infty(\Omega_n)$ and, after extending $\omega_n$ by $0$ in $D\setminus\Omega_n$, by the Banach-Alaoglu theorem, we can extract a subsequence such that
\begin{eqnarray*}
 b_n^{1/p} \om_n\rightharpoonup  b^{1/p} \om &\text{ weakly-$\ast$ in }& L^\infty(\R_+; L^p (D)) \\
\om_n \rightharpoonup \om  &\text{ weakly-$\ast$ in }& L^\infty (\R_+ \times D).
\end{eqnarray*}

\subsection{Simili harmonic functions: Dirichlet case} We now derive estimates for the simili harmonic solutions $\varphi_n^k$, $k=1,\cdots,N$. We recall that  $\varphi_n^k$ vanishes on the outer boundary $\partial \tilde \Omega_n$ and solves 
\begin{equation}\label{def-phikn} \left\{ \begin{aligned} \div \Big[ \frac 1{b_n} \nabla \varphi_n^k \Big] &=0, \qquad \mbox{in}\quad \Omega_n\\
\varphi_n^k & = \delta_{jk}, \qquad \mbox{on}\quad \partial \mathcal{C}^j_n,\qquad j=1,\cdots,N .\end{aligned}\right.\end{equation}
The existence and uniqueness of $\varphi_{n}^k$ was established in Proposition \ref{prop basis}. We obtain the following. 

\begin{lemma}\label{lem-similifns} 
The sequence $b_n^{-1/2} \nabla \varphi_n^k$ converges strongly to $b^{-1/2}\nabla\varphi^k$ in $L^2(D)$. In particular, $\varphi_n^k$ is uniformly bounded in $H^1(D)$ and $b_n^{-1/2} \nabla \varphi_n^k$ is uniformly bounded in $L^2(D)$. 
\end{lemma}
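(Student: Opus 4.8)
The plan is to view $\varphi_n^k$ as the unique minimizer of the weighted Dirichlet energy $\mathcal E_n(\varphi):=\int_{\OM_n} b_n^{-1}|\na\varphi|^2\,dx$ among functions carrying the boundary data in \eqref{def-phikn} (minimality being equivalent to \eqref{def-phikn} by strict convexity and Proposition \ref{prop basis}), and to prove that these minimizers converge together with their energies to the minimizer on the limiting lake. Strong convergence will then be extracted from weak convergence plus convergence of norms in the Hilbert space $L^2(D)$.

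First I would obtain the uniform bound. Writing $\chi^k$ for the fixed cut-off of \eqref{chi}, the Hausdorff convergences $\widetilde\OM_n\to\widetilde\OM$ and $\Cc^j_n\to\Cc^j$ force, for $n$ large, $\chi^k=\delta_{jk}$ near $\Cc^j_n$ and $\chi^k=0$ near $\partial\widetilde\OM_n$, so $\chi^k$ is an admissible competitor and $\varphi_n^k-\chi^k\in H^1_0(\OM_n)$. Minimality gives $\mathcal E_n(\varphi_n^k)\le\int_{\OM_n}b_n^{-1}|\na\chi^k|^2$, and since $\na\chi^k$ is supported in a compact $K\subset\OM$ where $b_n\ge\theta_K$ for $n$ large (Definition \ref{def-ConvLake}), the right-hand side is bounded uniformly in $n$. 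Hence $b_n^{-1/2}\na\varphi_n^k$ is bounded in $L^2(D)$; using the uniform bound $b_n\le M$ one gets $|\na\varphi_n^k|\le\sqrt{M}\,|b_n^{-1/2}\na\varphi_n^k|$, and together with the maximum principle bound $0\le\varphi_n^k\le1$ for \eqref{def-phikn} (after extending $\varphi_n^k$ by $\delta_{jk}$ inside $\Cc^j_n$ and by $0$ outside $\widetilde\OM_n$) this shows $\varphi_n^k$ is bounded in $H^1(D)$. Passing to a subsequence, $\varphi_n^k\rightharpoonup\Phi$ in $H^1(D)$ and $b_n^{-1/2}\na\varphi_n^k\rightharpoonup G$ in $L^2(D)$.

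Next I would identify $\Phi$. Since $b_n^{1/2}\to b^{1/2}$ strongly in $L^p_{\loc}(\OM)$ for every $p<\infty$, comparing $\na\varphi_n^k=b_n^{1/2}(b_n^{-1/2}\na\varphi_n^k)\rightharpoonup\na\Phi$ with $b^{1/2}G$ gives $G=b^{-1/2}\na\Phi$ a.e.\ on $\OM$. For $\zeta\in C^\infty_c(\OM)$, eventually supported in $\OM_n$, equation \eqref{def-phikn} reads $\int(b_n^{-1/2}\na\varphi_n^k)\cdot(b_n^{-1/2}\na\zeta)\,dx=0$; because $b_n^{-1/2}\na\zeta\to b^{-1/2}\na\zeta$ strongly in $L^2$ (the support is a fixed compact subset of $\OM$ where $b_n\to b$ is bounded below), we pass to the limit to get $\div(b^{-1}\na\Phi)=0$ in $\OM$. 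The delicate point, and the main obstacle, is the persistence of the Dirichlet condition: I must show $\Phi-\chi^k\in H^1_0(\OM)$. This is exactly where Hausdorff convergence of the domains enters, through the $\gamma$-convergence theory of \cite{GV_lac} (see Appendix \ref{app_gammaconv}): since $D\setminus\OM_n$ has a uniformly bounded number of connected components and the islands have positive capacity, $\OM_n\to\OM$ in the $\gamma$-sense, so every weak $H^1$-limit of functions in $H^1_0(\OM_n)$ lies in $H^1_0(\OM)$. Applied to $\varphi_n^k-\chi^k$ this recovers the boundary values; with the interior equation and $b^{-1/2}\na\Phi\in L^2$ (weak lower semicontinuity), $\Phi$ is simili harmonic with $\Phi=\delta_{jk}$ on $\partial\Cc^j$ and $\Phi=0$ on $\partial\widetilde\OM$, whence $\Phi=\varphi^k$ and $G=b^{-1/2}\na\varphi^k$ by the uniqueness in Proposition \ref{prop basis}.

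Finally, I would upgrade to strong convergence by proving convergence of energies. Lower semicontinuity gives $\|b^{-1/2}\na\varphi^k\|_{L^2}^2\le\liminf\mathcal E_n(\varphi_n^k)$. For the reverse inequality I build a recovery sequence: as $\varphi^k-\chi^k\in X_b$, Lemma \ref{lem density} provides $g_\varepsilon\in C^\infty_c(\OM)$ with $\|b^{-1/2}\na((\varphi^k-\chi^k)-g_\varepsilon)\|_{L^2}<\varepsilon$, and for $n$ large the competitor $w_n:=g_\varepsilon+\chi^k$ is admissible for \eqref{def-phikn}. Since all relevant gradients are supported in a fixed compact subset of $\OM$ where $b_n^{-1}\to b^{-1}$ boundedly (dominated convergence), $\mathcal E_n(w_n)\to\|b^{-1/2}\na(g_\varepsilon+\chi^k)\|_{L^2}^2$, which tends to $\|b^{-1/2}\na\varphi^k\|_{L^2}^2$ as $\varepsilon\to0$; minimality $\mathcal E_n(\varphi_n^k)\le\mathcal E_n(w_n)$ and a diagonal argument then yield $\limsup\mathcal E_n(\varphi_n^k)\le\|b^{-1/2}\na\varphi^k\|_{L^2}^2$. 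Thus $\|b_n^{-1/2}\na\varphi_n^k\|_{L^2(D)}\to\|b^{-1/2}\na\varphi^k\|_{L^2(D)}$, and weak convergence together with convergence of norms in $L^2(D)$ forces the claimed strong convergence. As the limit $\varphi^k$ is uniquely determined, the whole sequence converges, and the stated $H^1(D)$ and $L^2(D)$ bounds are byproducts of the first step.
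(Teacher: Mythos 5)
Your proof is correct, and its skeleton coincides with the paper's: uniform energy bound obtained by comparing with the fixed cut-off $\chi^k$ (whose gradient lives in a compact set where $b_n$ is bounded below), weak compactness, identification of the interior equation by testing against $C^\infty_c(\OM)$ functions, recovery of the Dirichlet data for $\varphi_n^k-\chi^k$ via Sver\'ak's theorem and Mosco convergence (Propositions \ref{prop9}--\ref{prop10}), and finally strong convergence from weak convergence plus convergence of the $L^2$ norms. The differences are in two steps. For the $L^2$ bound on $\varphi_n^k$ you invoke a maximum principle $0\le\varphi_n^k\le1$, while the paper simply applies the Poincar\'e inequality to $\tilde\varphi_n^k=\varphi_n^k-\chi^k\in H^1_0(\OM_n)\subset H^1_0(D)$; both work (the truncation argument for the minimizer is standard), but Poincar\'e is the shorter route and avoids any discussion of admissibility of the truncated competitor. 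More substantially, for the convergence of energies the paper does not build a recovery sequence: it uses the weak formulation itself, namely the identity $\int_{\OM_n}b_n^{-1}|\na\tilde\varphi_n^k|^2=-\int_{\OM_n}b_n^{-1}\na\tilde\varphi_n^k\cdot\na\chi^k$, and passes to the limit on the right-hand side by pairing the weakly convergent $\na\tilde\varphi_n^k$ with the strongly convergent $b_n^{-1}\na\chi^k$ (convergence of $b_n^{-1}$ on the fixed compact support of $\na\chi^k$). Your $\Gamma$-convergence-style argument with competitors $g_\varepsilon+\chi^k$ is also valid, but it leans on the density Lemma \ref{lem density}, which is proved only for smooth limit lakes via the Hardy inequality and hypothesis (H3); the paper's weak--strong pairing needs no density statement for the limit lake, which is precisely what allows the same lemma to be reused verbatim for the non-smooth lakes of Section \ref{sect non-smooth}. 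A last cosmetic point: the $\gamma$-convergence of $\OM_n$ to $\OM$ follows from the Hausdorff convergence and the uniform bound on the number of connected components of the complement alone; the positive capacity of the islands is not needed here (it enters later, for the invertibility of the circulation matrix).
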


In this statement and in all the sequel, $b_n^{-1/2} \nabla \varphi_n^k$ is extended by zero on $D\setminus \Omega_{n}$.

\begin{proof}
We first prove the boundedness and obtain convergence as a result of the convergence of the norm. As before, it is convenient to write, as in Proposition \ref{prop basis},
\[ \varphi_n^k = \tilde \varphi_n^k +  \chi^k, \qquad k = 1,\cdots,N.\]
Here, $\tilde \varphi_n^k \in X_{b_n}$ and $\chi^k$ denote  the cut-off functions  in $C^\infty_c(\Omega)$ such that $\chi^k$ is supported in a neighborhood of $\Cc^k$ and is identically equal to one on a smaller neighborhood of $\Cc^k$. Since $\Cc_n^k$ converges to $\Cc^k$, without loss of generality we can further assume that the same assumptions hold for $\Cc_n^k$ uniformly in $n\ge 0$. We then obtain $\tilde \varphi_n^k$ by solving  
\begin{equation}\label{def-tphikn} \div \Bigl[ \frac1{b_n}\nabla  \tilde \varphi_n^k \Bigl] = - \div \Bigl[ \frac1{b_n} \nabla \chi^k \Bigl] ,\quad \text{ in }  \Omega_n, \qquad  \tilde \varphi_n^k =0 \quad \text{ on } \quad\partial \Omega_n.\end{equation}
Multiplying this equation by $\tilde \varphi_n^k$ and integrating the result over $\Omega_n$, we readily obtain an a priori estimate:
 $$ \int_{\Omega_n} \frac{1}{b_n} |\nabla \tilde \varphi_n^k |^2 \; dx  = -\int_{\Omega_n} \frac{1}{b_n}\nabla \tilde\varphi_n^k  \nabla \chi^k \; dx \le \frac 12 \int_{\Omega_n} \frac{1}{b_n} |\nabla \tilde \varphi_n^k |^2 \; dx  +\frac 12 \int_{\Omega_n} \frac{1}{b_n} |\nabla \chi^k|^2 \; dx .$$
Here, we have used the Dirichlet boundary condition on $\tilde \varphi_n^k$. Now, remark that $\nabla \chi^k$ vanishes identically on a neighborhood of the boundary $\partial \Omega_n$ and $b_n$ are bounded above and below away from $\partial \Omega_n$. The last integral on the right-hand side of the above estimate is therefore uniformly bounded in $n$. 

This proves the boundedness and the weak convergence of $b_n^{-1/2}\nabla \tilde \varphi_n^k$ in $L^2(D)$ (with zero extension on $D\setminus \Omega_n$). Therefore, $\nabla \tilde \varphi_n^k$ is uniformly bounded in $L^2(D)$. The $H^1$ boundedness of $\tilde \varphi_n^k$ follows at once by the standard Poincar\'e inequality. 

Consequently, solutions $\varphi_n^k$ to \eqref{def-phikn} converge weakly in $H^1(D)$ to $\varphi^k\in H^1_{0}(D)$ verifying (in the sense of distributions):
\begin{equation*}
\div \Big[ \frac 1{b} \nabla \varphi^k \Big] =0, \qquad \mbox{in}\quad \Omega.
\end{equation*}
Without assuming that $\Omega_{n}$ is an increasing sequence, the difficulty could be to prove that $\varphi^k$ satisfies the right boundary conditions. The tool to get the boundary conditions is the $\gamma$-convergence. Namely, as $\OM_{n}$ converges in the Hausdorff topology to $\OM$ and as $\R^2\setminus \OM_{n}$ has $N+1$ connected components, then Proposition \ref{prop9} states that $\OM_{n}$ $\gamma$-converges to $\OM$. Hence, we can apply Proposition \ref{prop10} to $\tilde \varphi_n^k$ and infer that $\tilde \varphi^k$ belongs to $H^1_{0}(\OM)$. Therefore, we have the right boundary conditions:
\begin{equation*}
\varphi^k  = \delta_{jk}, \qquad \mbox{on}\quad \partial \mathcal{C}^j,\qquad j=1,\cdots,N .
\end{equation*}
Now, from the boundedness of $b_n^{-1/2} \nabla \tilde \varphi_n^k$ in $L^2$, we obtain at once the integrability of $b^{-1/2} \nabla \tilde \varphi^k$. Thus, by definition, $\tilde \varphi^k\in X$. 

From the equation \eqref{def-tphikn}, the weak convergence obtained above, the fact that $\tilde\varphi^k\in H^1_{0}(\OM)$ and that $b_{n}^{-1}\to b^{-1}$ in $L^2(\supp \nabla \chi^k)$ (by Definition \ref{def-ConvLake}), we have that
$$ \int_{\Omega_n} \frac{1}{b_n} |\nabla \tilde \varphi_n^k |^2 \; dx  = -\int_{\Omega_n} \frac{1}{b_n}\nabla \tilde\varphi_n^k  \nabla \chi^k \; dx \to -\int_{\Omega} \frac{1}{b}\nabla \tilde\varphi^k  \nabla \chi^k \; dx =  \int_{\Omega} \frac{1}{b} |\nabla \tilde \varphi^k |^2 \; dx.$$
This proves the strong convergence as claimed.
\end{proof}

\subsection{Simili harmonic functions: constant circulation} We next derive the convergence for the simili harmonic solutions $\psi_n^k$. We recall that  $\psi_n^k$ vanishes on the outer boundary $\partial \tilde \Omega_n$ and solves 
\begin{equation}\label{def-psikn} 
\left\{ \begin{aligned} \div \Big[ \frac 1{b_n} \nabla \psi_n^k \Big] &=0, \qquad \mbox{in}\quad \Omega_n\\
\gamma_n^j\Big( \frac 1{b_n} \nabla^\perp \psi_n^k\Big) & = \delta_{jk}, \qquad j=1,\cdots,N .\end{aligned}\right.\end{equation}
where the circulation around $\mathcal{C}^k$ defined in \eqref{circulation} verifies
$$\gamma_n^j\Big( \frac 1{b_n} \nabla^\perp \psi_n^k\Big) = -\int_{\Omega_{n}} \div\Big[ \frac{1}{b_n} \chi^j\nabla \psi_n^k \Big] \; dx =- \int_{\Omega_{n}} \div\Big[ \frac{1}{b_n} \varphi_n^j\nabla \psi_n^k \Big] \; dx ,$$
for $\varphi_n^j$ defined in the previous subsection. Indeed, we can replace $\chi^j$ by $\varphi_n^j$ in \eqref{circulation} by density of $C^\infty_{c}(\Omega_{n})$ in $X_{b_n}$:  an argument already used in the proof of Proposition \ref{decomposition} (see \eqref{cancellation}).

Now, since $\{\varphi_n^k\}_{k = 1,\cdots,N}$ forms a basis (see Proposition \ref{prop basis}), we can write 
\begin{equation}\label{exp-psin} \psi_n^k = \sum_{j = 1}^N a_n^{(k,j)} \varphi_n^j.\end{equation}
Thus, by \eqref{def-psikn}, we have 
$$ \delta_{jk} = -\int_{\Omega_{n}} \frac{1}{b_n} \nabla \psi_n^k \cdot \nabla \varphi_n^j \; dx  = -\sum_{l=1}^N a_n^{(k,l)}  \int_{\Omega_{n}} \frac{1}{b_n} \nabla \varphi_n^l \cdot \nabla \varphi_n^j \; dx .$$
Let $A_n $ be the $N\times N$ matrix with components $a_n^{(j,k)}$ and $\Phi_n$ the matrix formed by $ \int_{\Omega_{n}} \frac{1}{b_n} \nabla \varphi_n^k \cdot \nabla \varphi_n^j \; dx.$ By Lemma \ref{lem-similifns}, $\Phi_n$ is well-defined and is uniformly bounded in $n$. We also let $A$ and $\Phi$ be the matrix obtained from $A_n$ and $\Phi_n$ by replacing $b_n$ by $b$ and $\varphi_n^j$ by $\varphi^j$. The above identity yields that $-I = A_n \Phi_n$ and Lemma \ref{lem-similifns} implies that $\Phi_n \to \Phi$. To get that $A_n \to A$ we need to prove that $\Phi$ is invertible. If $(\OM,b)$ is a smooth lake, then it is obvious because we also have $I=- A\Phi$. Concerning non-smooth lake, the invertible property comes from the positive capacity of islands (see \cite[Sub. 2.2]{GV_lac} for all details).

The expansion \eqref{exp-psin} then yields the following lemma. 

\begin{lemma}\label{lem-convpsikn} $b_n^{-1/2} \nabla \psi_n^k$ converges strongly in $L^2(D)$ to $b^{-1/2} \nabla \psi^k$, for each $k$. 
\end{lemma}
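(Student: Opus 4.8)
The plan is to exploit the expansion \eqref{exp-psin} together with the strong convergence already supplied by Lemma \ref{lem-similifns}, so that everything reduces to the convergence of the coefficient matrix $A_n$. First I would record that $\Phi_n$ is the Gram matrix of the vectors $b_n^{-1/2}\nabla\varphi_n^j$ in $L^2$ and is therefore invertible for each $n$, so the relation $-I = A_n\Phi_n$ gives $A_n = -\Phi_n^{-1}$. Since Lemma \ref{lem-similifns} yields $\Phi_n\to\Phi$ entrywise and $\Phi$ has been shown to be invertible in the discussion preceding the statement, continuity of matrix inversion at invertible matrices gives $A_n\to-\Phi^{-1}=:A$ entrywise. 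Writing $a^{(k,j)}$ for the entries of $A$, the same linear-algebra computation carried out on the limiting smooth lake $(\Omega,b)$ shows $-C\Phi = I$ for the coefficient matrix $C$ of $\psi^k$ in the basis $\{\varphi^j\}$ of $\mathcal{SH}$ (Propositions \ref{prop basis} and \ref{Psi}); hence $C = -\Phi^{-1} = A$, i.e. the limiting simili harmonic function admits the expansion $\psi^k = \sum_{j=1}^N a^{(k,j)}\varphi^j$.

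Next I would differentiate \eqref{exp-psin} and multiply by $b_n^{-1/2}$ to obtain, with the convention of zero extension on $D\setminus\Omega_n$,
\begin{equation*}
b_n^{-1/2}\nabla\psi_n^k = \sum_{j=1}^N a_n^{(k,j)}\, b_n^{-1/2}\nabla\varphi_n^j.
\end{equation*}
Each summand is the product of a scalar sequence $a_n^{(k,j)}\to a^{(k,j)}$ with a sequence $b_n^{-1/2}\nabla\varphi_n^j$ that converges strongly in $L^2(D)$ to $b^{-1/2}\nabla\varphi^j$ by Lemma \ref{lem-similifns}. Since a bounded scalar sequence converging to a limit times a strongly $L^2$-convergent sequence converges strongly in $L^2$, and the sum over $j$ is finite, the whole right-hand side converges strongly in $L^2(D)$ to $\sum_{j=1}^N a^{(k,j)}\,b^{-1/2}\nabla\varphi^j = b^{-1/2}\nabla\psi^k$, which is exactly the assertion of the lemma.

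The honest remark here is that there is essentially no obstacle left at this stage: the genuinely delicate points, namely the correct boundary behaviour of the limit of $\varphi_n^k$, the upgrade from weak to \emph{strong} $L^2$ convergence of $b_n^{-1/2}\nabla\varphi_n^k$, and the invertibility of the limiting Gram matrix $\Phi$, have all been settled in Lemma \ref{lem-similifns} and in the paragraph preceding the statement. The only care needed is to make sure the limit is identified with $b^{-1/2}\nabla\psi^k$ rather than with some other linear combination; this is guaranteed by checking, through the matching linear algebra on the limiting lake, that the limiting coefficients $a^{(k,j)}$ are precisely the entries of $A = -\Phi^{-1}$, as noted above.
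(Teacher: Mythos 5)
Your proposal is correct and follows essentially the same route as the paper: the paper's justification of this lemma is precisely the preceding paragraph (the expansion \eqref{exp-psin}, the relation $-I=A_n\Phi_n$, the convergence $\Phi_n\to\Phi$ from Lemma \ref{lem-similifns}, and the invertibility of $\Phi$ giving $A_n\to A$), after which the strong convergence follows from the finite linear combination exactly as you write. Your identification of the limit via the matching relation $-A\Phi=I$ on the limiting lake is the same observation the paper makes in passing.
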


\subsection{Estimates of $\alpha_{n}^k$}

As the circulation is conserved $\gamma^{k}_n(v_{n})=\gamma^k_{n}$ (see Proposition \ref{circ const}), it is easy to get from the uniform bound of $\sqrt{b_{n}} \omega_{n}$ in $L^2$ (see Lemma \ref{lem-vorticity}), of $\sqrt{b_{n}}$ (see Definition \ref{def-ConvLake}), of $\varphi_{n}^k$  in $L^2$ (see Lemma \ref{lem-similifns}), that $\alpha_{n}^k$ is uniformly bounded in time and in $n$. From the boundedness, we deduce directly that
\[
\alpha_{n}^k \text{ converges weak-$\ast$ in }L^\infty(\R_+) \text{ to } \alpha^k(t)=\gamma^k+\int_{\Omega}b\omega \varphi^k.
\]

\subsection{Kernel part with Dirichlet condition} Let us next deal with the kernel part 
\begin{equation}\label{eq-kernel}
\div \Big[\frac1{b_n} \na \p^0_n\Big] = b_n \om_n, \quad \p^0_n\vert_{\pd  \Omega_n}=0.
\end{equation}

\begin{lemma}\label{lem-psi0} $\psi_n^0$ converges weakly-$\ast$ in $W^{1,\infty}(\R_+;H^1(D))$ to $\psi^0$, which is the solution of
\[\div \Big[\frac1{b} \na \p^0\Big] = b \om, \quad \p^0\vert_{\pd  \Omega}=0.\]
Furthermore, there holds the strong convergence  
$$ \frac{1}{\sqrt{b_n}}\nabla \psi_n^0 \to  \frac{1}{\sqrt b} \nabla \psi^0 \qquad \text{ strongly in }\quad  L^2((0,T)\times D) \text{ for any } T>0.$$
\end{lemma}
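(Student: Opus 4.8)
The plan is to mirror the strategy already used for the simili harmonic functions in Lemma \ref{lem-similifns}, adding the time variable: establish uniform bounds (now also in $t$), extract weak-$\ast$ limits, identify the limit through $\gamma$-convergence, and upgrade weak to strong convergence via convergence of the weighted Dirichlet energies. For the a priori bounds, multiplying \eqref{eq-kernel} by $\psi_n^0$ and integrating by parts (the boundary term vanishes since $\psi_n^0\in H^1_0(\widetilde\Omega_n)$) gives, for each fixed $t$,
\[
\int_{\Omega_n}\frac1{b_n}|\nabla\psi_n^0|^2\,dx=-\int_{\Omega_n}b_n\om_n\psi_n^0\,dx .
\]
Since $b_n\le M$ we have $\|\nabla\psi_n^0\|_{L^2}^2\le M\int b_n^{-1}|\nabla\psi_n^0|^2$, while $b_n\om_n=\sqrt{b_n}\,(\sqrt{b_n}\om_n)$ is bounded in $L^2$ uniformly in $t,n$ by Lemma \ref{lem-vorticity}; together with the Poincaré inequality on $D$ applied to the zero-extension of $\psi_n^0$, this bounds $\psi_n^0$ in $L^\infty(\R_+;H^1(D))$. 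For the time derivative I differentiate \eqref{eq-kernel} in $t$ (the coefficient $b_n$ is stationary) and use \eqref{transport}, i.e. $\partial_t(b_n\om_n)=-\div(b_nv_n\om_n)$, to obtain $\div\big(b_n^{-1}\nabla\partial_t\psi_n^0+b_nv_n\om_n\big)=0$ with $\partial_t\psi_n^0=0$ on $\partial\Omega_n$. Testing against $\partial_t\psi_n^0$ yields $\int b_n^{-1}|\nabla\partial_t\psi_n^0|^2=-\int b_nv_n\om_n\cdot\nabla\partial_t\psi_n^0$, and since $b_nv_n\om_n=\sqrt{b_n}(\sqrt{b_n}v_n)\om_n$ is bounded in $L^2$ (using the a priori bound on $\sqrt{b_n}v_n$ in $L^\infty(\R_+;L^2)$ and $\|\om_n\|_{L^\infty}\lesssim1$), the same absorption argument bounds $\partial_t\psi_n^0$ in $L^\infty(\R_+;H^1(D))$. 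Thus $\psi_n^0$ is bounded in $W^{1,\infty}(\R_+;H^1(D))$; Banach-Alaoglu gives a weak-$\ast$ limit $\psi^0$ along a subsequence, and the compact embedding $H^1(D)\hookrightarrow\hookrightarrow L^2(D)$ with the Aubin-Lions lemma gives $\psi_n^0\to\psi^0$ strongly in $C([0,T];L^2(D))$ for every $T>0$.

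To identify the limit, I test \eqref{eq-kernel} against $l(t)\phi(x)$ with $l\in C_c^\infty(\R_+)$, $\phi\in C_c^\infty(\Omega)$. On any compact $K\Subset\Omega$ one has $\theta_K\le b_n\le M$ and $b_n\to b$ in $L^1(K)$ by Definition \ref{def-ConvLake}, whence $b_n^{-1/2}\to b^{-1/2}$ in every $L^q(K)$; combined with $\nabla\psi_n^0\rightharpoonup\nabla\psi^0$ in $L^2$ this gives $b_n^{-1/2}\nabla\psi_n^0\rightharpoonup b^{-1/2}\nabla\psi^0$ in $L^2_{\loc}(\Omega)$, while $b_n\om_n\rightharpoonup b\om$ by Lemma \ref{lem-vorticity}. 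Hence $\psi^0$ solves $\div(b^{-1}\nabla\psi^0)=b\om$ in $\Omega$. The boundary condition is obtained exactly as in Lemma \ref{lem-similifns}: since $\R^2\setminus\Omega_n$ has $N+1$ connected components and $\Omega_n\to\Omega$ in the Hausdorff sense, Proposition \ref{prop9} gives $\gamma$-convergence and Proposition \ref{prop10} then forces $\psi^0\in H^1_0(\widetilde\Omega)$. By the uniqueness in Proposition \ref{prop elliptic} for the (smooth) limiting lake, the limit is independent of the subsequence, so the whole sequence converges.

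For the strong convergence I use the convergence-of-norms criterion in the Hilbert space $L^2((0,T)\times D)$, as in Lemma \ref{lem-similifns}. Integrating the energy identity in time gives
\[
\big\|b_n^{-1/2}\nabla\psi_n^0\big\|_{L^2((0,T)\times D)}^2=-\int_0^T\!\!\int_{\Omega_n}b_n\om_n\psi_n^0\,dx\,dt .
\]
The right-hand side is a weak-times-strong pairing, since $b_n\om_n\rightharpoonup b\om$ in $L^2((0,T)\times D)$ and $\psi_n^0\to\psi^0$ strongly there; it therefore converges to $-\int_0^T\!\int_\Omega b\om\psi^0=\|b^{-1/2}\nabla\psi^0\|^2_{L^2((0,T)\times D)}$. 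Thus the norms converge. Since $b_n^{-1/2}\nabla\psi_n^0\rightharpoonup b^{-1/2}\nabla\psi^0$ on $\Omega$ by the previous paragraph, weak lower semicontinuity together with the norm identity forces the weak limit to vanish on $D\setminus\Omega$; hence the weak limit over all of $D$ is $b^{-1/2}\nabla\psi^0$, and convergence of norms upgrades weak to strong convergence in $L^2((0,T)\times D)$.

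Granting Lemma \ref{lem-vorticity}, the a priori $L^2$ bound on $\sqrt{b_n}v_n$, and the $\gamma$-convergence of the domains, the argument is essentially routine; the one genuinely new ingredient compared with Lemma \ref{lem-similifns} is the \emph{time} regularity, and I expect the main technical point to be the time-derivative estimate and the resulting Aubin-Lions compactness of $\psi_n^0$ in $C([0,T];L^2(D))$. This $L^2$ compactness is exactly what turns $\int b_n\om_n\psi_n^0$ into a weak-strong product, and it drives both the identification of the limit and the convergence of norms; without it the degeneracy of $b_n$ near the moving boundary would obstruct passing to the limit in the weighted energy.
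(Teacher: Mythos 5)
Your proposal is correct and follows essentially the same route as the paper: the energy identity from testing \eqref{eq-kernel} against $\psi_n^0$ combined with Poincar\'e on $D$ for the spatial bound, the transport equation $\partial_t(b_n\om_n)=-\div(b_nv_n\om_n)$ tested against $\partial_t\psi_n^0$ for the $W^{1,\infty}$-in-time bound, Mosco/$\gamma$-convergence to recover the Dirichlet condition, and convergence of the weighted Dirichlet energies (via the weak-strong pairing $\int b_n\om_n\psi_n^0$) to upgrade weak to strong convergence in $L^2((0,T)\times D)$. The only cosmetic differences are that the paper derives the uniform bound on $\sqrt{b_n}v_n$ inside this proof rather than assuming it, and obtains the $C(\R_+;L^2)$ compactness directly from the $W^{1,\infty}(\R_+;H^1_0(D))$ bound rather than invoking Aubin--Lions by name.
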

\begin{proof} Multiplying \eqref{eq-kernel} by $\psi_n^0$, we get 
\begin{equation}\label{id-psi0n} \int_{\Omega_n} \frac 1{b_n} |\nabla \psi_n^0|^2 \; dx  = -\int_{\Omega_n} b_n \omega_n \psi_n^0 \; dx \le \|\sqrt{b_n} \omega_n\|_{L^2} \|\sqrt{b_n}\psi_n^0 \|_{L^2} ,\end{equation}
in which $\|\sqrt{b_n} \omega_n\|_{L^2}$ is bounded thanks to Lemma \ref{lem-vorticity}. Using the Poincar\'e inequality on $D$ with Definition \ref{def-ConvLake}, we obtain that
\[
\|\sqrt{b_n}\psi_n^0 \|_{L^2(\Omega_{n})} \le \sqrt{M}\|\psi_n^0 \|_{L^2(\Omega_{n})} \le  c_{0} \sqrt{M}\| \nabla \psi_n^0 \|_{L^2(\Omega_{n})}  \le c_0 M  \|\frac 1{\sqrt{b_n}}\nabla \psi_n^0 \|_{L^2(\Omega_{n})},
\]
hence $\frac 1{\sqrt{b_n}}\nabla \psi_n^0$ and $\nabla \psi_n^0$ are uniformly bounded in $L^2(D)$, which implies that $\psi_n^0$ is uniformly bounded $H^1_0(D)$.

Putting together all the uniform bounds obtained in this section, we finally see that
\[
\sqrt{b_{n}} v^0_{n}=b_{n}^{-\frac12} \nabla \psi^0_{n} +\sum_{k=1}^N \alpha_{n}^k(0) b_{n}^{-\frac12} \nabla \psi^k_{n} \text{ is uniformly bounded in } L^2(D).
\]
It is now possible to state that $\sqrt{b_{n}} v_{n}$ is uniformly bounded in $L^\infty(\R_+;L^2(D))$ by the standard energy estimate, which is useful in the following estimate.

Similarly, $\partial_t \psi_n^0$ solves 
$$\div \Big[\frac1{b_n} \na \partial_t\p^0_n\Big] = \partial_t(b_n \om_n) = - \div (b_n v_n \omega_n), \quad \partial_t \p^0_n\vert_{\pd  \Omega_n}=0,$$
from which we obtain in the same way that
$$ \Big \| \frac 1{\sqrt{b_n}} \partial_t\nabla \psi_n^0\Big\|_{L^2(\Omega_{n})} \le \| \sqrt {b_n} v_n \|_{L^2} \| b_n \omega_n\|_{L^\infty}\lesssim1.$$
It follows that $ \frac 1{\sqrt{b_n}} \nabla \psi_n^0$ belongs to $W^{1,\infty}(\R_+;L^2(D))$ and $\psi_n^0 $ is in $W^{1,\infty}(\R_+; H^1_0(D))$. Consequently, up to some subsequence, there holds that
\begin{equation}\label{conv-weakpsi0}
 \frac{1}{\sqrt{b_n}}\nabla \psi_n^0 \rightharpoonup  \frac{1}{\sqrt b} \nabla \psi^0 \qquad \text{ weakly-}\ast \text{ in }\quad  L^\infty(\R_+; L^2(D)),
\end{equation}
and 
\[
 \psi_n^0 \to   \psi^0 \qquad \text{ weak-$*$ in }\quad  W^{1,\infty}(\R_+; H^1_{0}(D)) \text{ and  strongly in } C(\R_+; L^2(D)).
\]
By Mosco's convergence (see Proposition \ref{prop10}), it follows that $\psi^0 \in H_0^1(\Omega)$. Furthermore, it follows easily that $\psi^0$ solves 
\begin{equation}\label{eq-kernel0}
\div \Big[\frac1{b} \na \p^0\Big] = b \om, \quad \p^0\vert_{\pd  \Omega}=0,
\end{equation}
in the distributional sense. 

Next, by using the weak convergence of $\sqrt{b_n} \omega_n$ and strong convergence of $\sqrt{b_n}\psi_n^0$ in $L^2$, the identity \eqref{id-psi0n} then yields 
$$\int_{0}^T \int_{\Omega_n} \frac 1{b_n} |\nabla \psi_n^0|^2 \; dx  = -\int_{0}^T\int_{\Omega_n} b_n \omega_n \psi_n^0 \; dx  \to  - \int_{0}^T\int_{\Omega} b \omega \psi^0 \; dx  =  \int_{0}^T\int_{\Omega} \frac 1{b} |\nabla \psi^0|^2 \; dx, $$
in which the last identity follows from the equation \eqref{eq-kernel0}. Thus, the convergence \eqref{conv-weakpsi0} is indeed a strong convergence in $L^2((0,T)\times D)$. This proves the lemma.
\end{proof}

\subsection{Convergence of $\alpha_n^k$} In view of \eqref{BS-law}, we next study the convergence of $\alpha_n^k(t)$. We have already obtained a uniform bound in $L^\infty (\R_{+})$. Using the boundary condition $b_nv_n\cdot \nu_n=0$ in the last identity below, it follows that 
$$ \partial_t\alpha_n^k(t) = -  \int_{\Omega_n} \partial_t(b_n \omega_n)\varphi_n^k\; dx =  \int_{\Omega_n} \div(b_n v_n \omega_n)\varphi_n^k\; dx = - \int_{\Omega_n} b_n \omega_n \sqrt{b_n} v_n \cdot \frac{1}{\sqrt{b_n}}\nabla \varphi_n^k\; dx ,$$ 
which is again bounded by $L^2$ estimates. The strong convergence of $\alpha_n^k(t) $ to $\alpha^k(t)$ in $L^2((0,T))$ for any $T>0$ thus follows from this bound in $W^{1,\infty}(\R_+)$.

\subsection{Passing to the limit in the lake equation} It is now easy by \eqref{def-streamfn} and the expression \eqref{BS-law} to construct the limiting solution. Indeed, we recall from \eqref{BS-law} that $$\begin{aligned}
\psi_n (t,x) &= \psi_n^0(t,x) + \sum_{k=1}^N \alpha_n^k(t) \psi_n^k(x)
\end{aligned}
$$
with $\psi_n^0$ constructed as in \eqref{eq-kernel} and $\psi_n^k$ as in \eqref{def-psikn}. Lemmas \ref{lem-convpsikn} and \ref{lem-psi0} together with the convergence of $\alpha_n^k(t)$ then yield that the limiting function $\psi$ satisfies 
$$\psi (t,x)= \psi^0(t,x) + \sum_{k=1}^N \alpha^k(t) \psi^k(x).$$ 
We then introduce the limiting velocity through 
$$v: = \frac 1b \nabla^\perp \psi. $$
It follows clearly that $\sqrt{b_{n}}v_n \to \sqrt{b}v$ strongly in $L^2_{\loc}(\R_+; L^2(D))$. For any test function $\varphi\in C^\infty_{c}([0,\infty)\times \Omega)$, by the Hausdorff convergence, there exists $N_{\varphi}$ such that for any $n\geq N_{\varphi}$, $\varphi(t,\cdot)$ is compactly supported in $\Omega_{n}$ for all $t$. As $(v_{n},\omega_{n})$ is a global interior solution in the sense of Definition \ref{defi-weak-sol}, hence \eqref{eq-vort} holds for any $n\geq N_{\varphi}$, and for the limit. In addition, the divergence-free and boundary conditions follow at once from Lemma \ref{lem div} and our construction of the approximate solutions: $ \psi_n^k = \sum_{j = 1}^N a_n^{(k,j)} \varphi_n^j$ with $\varphi_n^k = \tilde \varphi_n^k + \chi^k$ (see \eqref{exp-psin}).

Therefore, the limit $v$ enjoys a Biot-Savart decomposition, and passing to the limit in the circulation definition \eqref{circulation} we obtain that the circulations of $v$ are conserved.

One notices that all the convergence results hold up to a subsequence extraction. However, if the limit lake is smooth, i.e. $(\partial\Omega,b)\in C^3\times C^3(\overline{\Omega})$ verifying assumptions (H1)-(H3), then we have proved that $(v,\omega)$ is a global weak interior solution in the vorticity formulation for the lake $(\Omega,b)$, with constant circulations, hence $(v,\omega)$ is also a global weak solution in the vorticity formulation (see Proposition \ref{prop A4}) and a global weak solution in the velocity formulation (see Proposition \ref{prop A3}). The uniqueness result implies that the whole sequence converges to the unique solution of the lake equations. This ends the proof of Theorem \ref{theo-main}.

\begin{remark}
In the previous proof, we never use that the islands are simply connected, hence we can relax this condition by assuming that $\Cc^i$ is a connected compact subset of $\Omega$. Indeed, in \cite[Proposition 1]{GV_lac}, it is proved that any connected compact set $\Cc^i$ can be approximated, in the Hausdorff topology, by smooth simply-connected compact set. Therefore, the case of a smooth simply-connected island which closes on itself (giving at the limit an annulus) is included in our analysis (see \cite[Section 5.1]{GV_lac} for pictures).
\end{remark}

\section{Non-smooth lakes}\label{sect non-smooth}
Let $(\Omega,b)$ be a lake satisfying (H1)-(H2). We assume that the $H^1$ capacity of all the islands is positive: ${\rm cap\ }(\Cc^k)>0$ for all $1\le k\le N$. Here, we assume no regularity on the boundary $\partial \Omega$. 

\bigskip

\noindent
{\em Domain approximation.} Without assuming any regularity on $\Omega$, we infer that $\Omega$ verifying (H1) is the Hausdorff limit of a sequence 
 $$\Omega_n \: := \:  \widetilde \Omega_n \: \setminus \: \left( \cup_{i=1}^k \overline{O_n^i}\right),$$
where  $\widetilde \Omega_n$ and $O_n^i$'s  are smooth Jordan domains, and such that  $\widetilde \Omega_n$, resp.   $\overline{O_n^i}$,   converges  in the Hausdorff sense to  $\widetilde \Omega$, resp.   $\Cc^i$. Such a property is a consequence of the Hausdorff topology and a proof can be found in \cite[Proposition 1]{GV_lac}. Moreover, therein, the sequence $\Omega_{n}$ can be constructed to be increasing thanks to the assumption that the obstacles $\Cc^{i}$ are simply connected\footnote{If $\Cc^{i}$ is a simply connected compact set, there exists a Riemann mapping $\Tc$ from $(\Cc^{i})^c$ to the exterior of the unit disk. Then,  $O_n^i:=(\Tc^{-1}(B(0,1+1/n)^c))^c$ is a smooth Jordan domain such that $\Cc^i \subset O_{n+1}^i \subset O_{n}^i$.}.

\bigskip

\noindent
{\em Bottom approximation.} We assume that $b$ is a bounded positive function on $\Omega$. It follows that there is a sequence $b_{n}\in C^\infty(\Omega_{n})$ with $M+1\geq b_{n}\geq \theta_{n}>0$ on $\Omega_{n}$ such that  $b_n$ converges strongly to $b$ in $L^p_{\loc}(\Omega)$ for any  $p\in [1,\infty)$.  
Indeed, we may define
\[
b_{n}:=(\rho_{n}*b)_{\vert \Omega_{n}}+\tfrac1n.
\]
Hence, the lake $(\Omega_{n},b_{n})$ is a smooth lake with a non-vanishing topography ($\theta_{n}=\frac1n$). Moreover, since for any compact set $K\subset \Omega$ there exists $\theta_{K}>0$ such that $b(x)\geq \theta_{K}$ on $K$, then there exists $n_{0}(K)$ such that for all $n\geq n_{0}(K)$ we have $b_{n}(x)\geq \theta_{K}/2$ on $K$. It then follows that the lake $(\Omega_{n},b_{n})$ converges to $(\Omega,b)$ in the sense of Definition \ref{def-ConvLake}.

Moreover, if $b\in W^{1,\infty}_{\loc}(\Omega)$ then our approximation $b_{n}$ convergences weakly to $b$ in $W^{1,\infty}_{\loc}(\Omega)$.

\bigskip

\noindent
{\em Initial data approximation.} For a function $u$ defined on a subset $U$ of $D$, we define $\underline{u}$ by $\underline{u}(x)=u(x)$ if $x\in U$ and $\underline{u}(x)=0$ if $x\in D\setminus U$. If $\omega^{0}\in L^\infty(\Omega)$ and $\gamma\in \R^N$ is given, then we consider $v_{n}^0$ such that
\[
\div (b_{n} v_{n}^0)=0, \quad \curl v_{n}^0 = b_{n} \underline{\om^0}\vert_{\OM_{n}}, \quad (b_{n} v_{n}^0)\cdot \nu \vert_{\pd \OM_{n}}=0,
\]
with its circulation around each $\overline{O_n^k}$ equal to $\g^k$, for all $k=1,\cdots,N$.

\bigskip

\noindent
{\em Existence result.} Similarly to the analysis of $(v_n,\omega_n)$ done in Section \ref{sect 3}, we get, up to extraction of a subsequence, that
\[ v_n \to v \text{ strongly in } L^2_{\loc}(\R_+; L^2(D)),\qquad   \om_n\rightharpoonup   \om \text{ weakly in } L^\infty(\R_+\times D) , \]
for some limiting pair $(v,\omega)$. It also follows that $(v,\omega)$ is a global weak interior solution in the vorticity formulation of the lake equations on the lake $(\OM, b)$ with initial vorticity $\omega^0$ and initial circulation $\gamma \in \R^N$. Furthermore, this constructed solution also enjoys a Biot-Savart decomposition, with constant circulations. For this part, we do not use any regularity on $b$.

To prove that $(v,\omega)$ is a global weak solution in the vorticity formulation, we take an arbitrary $\varphi\in C^\infty_{c}([0,\infty)\times \overline{\Omega})$ and verify \eqref{eq-vort} for $(v,\omega)$. Indeed, since $\Omega_n$ is increasing, $\varphi\vert_{\Omega_{n}}\in C^\infty_{c}([0,\infty)\times \overline{\Omega_{n}})$ is a test function for which \eqref{eq-vort} holds for $(v_{n},\omega_{n})$ (see Remark \ref{rem vort}). It is now easy to pass to the limit in \eqref{eq-vort}, which gives at once that  $(v,\omega)$ is a global weak solution in the vorticity formulation, even for test functions which are not constant on the boundary.

As for {\it interior} solution in the velocity formulation, we have to consider $b\in W^{1,\infty}_{\loc}(\Omega)$. In this case, we have stronger convergence of $b_{n}$ to $b$, which allows us to pass to the limit in the velocity equations \eqref{eq-vel}.

This completes the proof of Theorem \ref{theo-non-smooth}.

\bigskip

\noindent
{\em Remark on initial velocity.} If the initial data is given in terms of $v^{0}\in L^1_{\loc}(\Omega)$ such that $\omega^0:=\frac{\curl v^{0}}b\in L^\infty(\Omega)$, then the generalized circulation of a vector field $v$ around $\Cc^k$ is well defined:
\begin{equation*}
\g^k(v^0)= -\int_{\OM}\left(\na^\perp \chi^k\cdot v^0 + \chi^k \curl v^0\right)\, dx .
\end{equation*}
Hence we can consider $v_{n}^0$ such that
\[
\div (b_{n} v_{n}^0)=0, \quad \curl v_{n}^0 ={b_{n}} \underline{\tfrac{\curl (v^0)}b}\vert_{\OM_{n}}, \quad (b_{n} v_{n}^0)\cdot \hat{n} \vert_{\pd \OM_{n}}=0,
\]
with its circulation around each $\overline{O_n^k}$ equal to $\g^k(v^0)$, for all $k=1,\cdots,N$. Therefore, the previous compactness argument gives a solution with an initial velocity $\tilde v^0$ which has the same properties than $v^0$ (namely, same vorticity, circulations, and the same divergence and tangency condition). Nevertheless, it is not clear that $\tilde v^0=v^0$, even for $b$ lipschitz, because we need that the lake is smooth to apply Lemma \ref{1to1}.

\bigskip

\noindent
{\em Remark on uniqueness.} As written in the introduction, the uniqueness is not clear for non-smooth lake. To prove uniqueness in Section \ref{sect WP}, we need that the velocity belongs to $W^{1,p}$ for any $p<\infty$ with good bounds, which follows from the Calder\'on-Zygmund inequality. However, such an inequality is only true for smooth lake (i.e. $(\partial\Omega,b)\in C^3\times C^3(\overline{\Omega})$) and the first author shows in \cite{lac-uni} that the velocity for Euler equations blows up near an obtuse corner.
Therefore, the uniqueness result seems challenging for non-smooth domains. Even if the first author obtained a uniqueness result for the Euler equations adding some assumptions (namely, $\om^0$ is assumed to be compactly supported with definite sign, and $\OM$ is a simply connected bounded open set which is smooth except in a finite number of points), it is not clear how to adapt those techniques to the lake equations (e.g. to have an explicit formula for the Green kernel when the bottom is not flat).

\bigskip

\noindent
 {\bf Acknowledgements.} The authors are grateful to Didier Bresch for pointing us the interest of non-smooth lakes.
 
 The first author is partially supported  by the Project ``Instabilities in Hydrodynamics'' funded by Paris city hall (program ``Emergences'') and the Fondation Sciences Math\'ematiques de Paris and by the Project MathOc\'ean, grant ANR-08-BLAN-0301-01, financed by the Agence Nationale de la Recherche. Research of T.N. is supported in part by the NSF under grant DMS-1108821.  
 
\appendix 

\section{Equivalence of the various weak formulation} \label{sect equiv defi}

The goal of this section is to link together the various formulations of weak solutions to the lake equations, precisely between Definition \ref{defi-weak-velocity} and Definition \ref{defi-weak-sol}. First, it is obvious that
\begin{itemize}
\item a global weak solution of the velocity formulation is a global {\em interior} weak solution of the velocity formulation;
\item a global weak solution of the vorticity formulation is a global {\em interior} weak solution of the vorticity formulation;
\end{itemize}

Next, for any vector field $v$ such that $\div bv =0$, we can compute
\begin{equation}\label{algebra}
\div(bv\otimes v) = bv \cdot \nabla v = (bv)^\perp \curl v  + \frac{b}2 \nabla |v|^2.
\end{equation}
This equality is the key of the following propositions.

\begin{proposition}\label{prop A1}
Let $(\Omega,b)$ be a lake satisfying (H1)-(H2) with $b\in W^{1,\infty}_{\loc}(\Omega)$. Then a global {\em interior} weak solution of the velocity formulation is a global {\em interior} weak solution of the vorticity formulation.
\end{proposition}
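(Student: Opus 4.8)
The plan is to feed the interior velocity formulation \eqref{eq-vel} a judiciously chosen divergence-free test field built from an arbitrary scalar test function, and to recover the interior vorticity identity \eqref{eq-vort} after one integration by parts and the algebraic identity \eqref{algebra}. Concretely, I fix $\f\in C^\infty_c([0,\infty)\times\OM)$ and set $\Phi:=\nabla^\perp\f$. Since $\div\nabla^\perp\f=0$ and $\f$ is compactly supported inside $\OM$, the field $\Phi$ is an admissible (divergence-free, compactly supported, hence trivially tangent) interior test function; moreover $\Phi/b=b^{-1}\nabla^\perp\f$ is Lipschitz with compact support, because $b\in W^{1,\infty}_{\loc}(\OM)$ is bounded below on $\supp\f$ by (H2). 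Setting $\om:=b^{-1}\curl v\in L^\infty$, the velocity-formulation hypothesis (i) supplies exactly the regularity required in items (i)--(iii) of Definition \ref{defi-weak-sol}, so only (iv), i.e. \eqref{eq-vort}, remains to be verified.

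For the two linear terms I would invoke the planar identity $\int_\OM\nabla^\perp g\cdot v\,dx=-\int_\OM g\,\curl v\,dx$, valid for $g$ compactly supported in $\OM$. Applied with $g=\f_t$ and $g=\f(0,\cdot)$, this turns the time-derivative term of \eqref{eq-vel} into $-\int_0^\infty\int_\OM\f_t\,b\om\,dxdt$ and the initial term into $-\int_\OM\f(0,x)\,b\om^0(x)\,dx$, matching (up to a global sign) the corresponding terms of \eqref{eq-vort}.

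The nonlinear term is the heart of the computation. I would first integrate by parts to write $\int_\OM(bv\otimes v):\nabla(b^{-1}\nabla^\perp\f)\,dx=-\int_\OM\div(bv\otimes v)\cdot b^{-1}\nabla^\perp\f\,dx$, then insert \eqref{algebra} to replace $\div(bv\otimes v)$ by $(bv)^\perp\curl v+\tfrac{b}{2}\nabla|v|^2$. Contracting the first piece with $b^{-1}\nabla^\perp\f$ and using $v^\perp\cdot\nabla^\perp\f=v\cdot\nabla\f$ produces $-\int_\OM b\om\,v\cdot\nabla\f\,dx$, precisely the transport term of \eqref{eq-vort}; the second piece gives $-\tfrac12\int_\OM\nabla|v|^2\cdot\nabla^\perp\f\,dx$, which vanishes since $\nabla^\perp\f$ is divergence-free and compactly supported, a gradient paired with a compactly supported divergence-free field integrating to zero. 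Collecting the three contributions and multiplying by $-1$ yields \eqref{eq-vort} for every such $\f$, which is exactly condition (iv).

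The main obstacle is purely one of regularity: the integration by parts above and the pointwise use of \eqref{algebra} require $\div(bv\otimes v)=bv\cdot\nabla v$ to be a genuine $L^1_{\loc}$ function rather than a mere distribution. I would secure this by an interior elliptic regularity argument. From hypothesis (i) one has $\curl v=b\om\in L^\infty_{\loc}$, while $\div(bv)=0$ together with $b\in W^{1,\infty}_{\loc}$ gives $\div v=-b^{-1}\nabla b\cdot v\in L^2_{\loc}$; since $\Delta v$ is a first-order expression in $\div v$ and $\curl v$, the standard div-curl interior estimate upgrades $v\in L^2_{\loc}$ to $v\in H^1_{\loc}(\OM)$. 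With this in hand, $\div(bv\otimes v)\in L^1_{\loc}$, the pointwise identity \eqref{algebra} holds a.e., and the integration by parts legitimately pairs this $L^1$ field against the Lipschitz, compactly supported field $b^{-1}\nabla^\perp\f$. Alternatively, one could mollify $v$ on $\supp\f$, perform the computation for the smooth approximants, and pass to the limit using the $L^2_{\loc}$ bounds together with $\curl v\in L^\infty_{\loc}$.
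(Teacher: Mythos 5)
Your proof is correct and follows essentially the same route as the paper: test \eqref{eq-vel} with $\Phi=\nabla^\perp\f$, integrate by parts, invoke \eqref{algebra}, note that $v^\perp\cdot\nabla^\perp\f=v\cdot\nabla\f$, and observe that the gradient term pairs to zero against the compactly supported divergence-free field $\nabla^\perp\f$. Your additional paragraph justifying the interior regularity ($v\in H^1_{\loc}(\OM)$ via the div--curl system) needed to legitimize the pointwise identity \eqref{algebra} and the integration by parts is a welcome refinement of a point the paper's proof leaves implicit.
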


\begin{proof}
Let us fix a test function $\varphi\in C^\infty_{c}([0,\infty)\times \OM)$, then $\F:=\nabla^\perp \varphi$ is divergence free and belongs to $C^\infty_{c}([0,\infty)\times \OM)$. As $v$ is a global interior weak solution of the velocity formulation, there holds
\begin{equation*}\begin{split}
0=&\int_0^\infty\int_{\OM} \Big[ \Phi_t \cdot v  + (bv\otimes v): \nabla\Bigl(\frac{\Phi}b \Bigl) \Big] \, dxdt+\int_{\OM}\Phi(0,x) \cdot v^0(x)\, dx\\
=&\int_0^\infty\int_{\OM} \Big[ \Phi_t \cdot v - \div(bv\otimes v)\cdot\Bigl(\frac{\Phi}b \Bigl) \Big] \, dxdt + \int_0^\infty \int_{\partial\OM} \Phi (v\otimes v) \nu \, d\tau dt +\int_{\OM}\Phi(0,x) \cdot v^0(x)\, dx.
\end{split}\end{equation*}
The boundary term vanishes because the support of test function does not intersect the boundary. Next, we use \eqref{algebra} and integrate by parts the linear terms to get
\begin{equation*}\begin{split}
0=&\int_0^\infty\int_{\OM} \Big[ \varphi_t \curl v + (\curl v) v^\perp \cdot \nabla^\perp \varphi  +\frac12 \nabla |v|^2  \cdot \nabla^\perp \varphi  \Big] \, dxdt +\int_{\OM}\varphi(0,x) \curl v^0(x)\, dx.
\end{split}\end{equation*}
Integrating by part the third integral and setting $\om:=b^{-1} \hbox{curl}(v)$, we then find
\[
0=\int_0^\infty\int_{\OM}\Big[ \varphi_t b\om + b\om v \cdot \nabla \varphi  \Big] \, dxdt+\int_{\OM}\varphi(0,x) b\om^0(x)\, dx,
\]
which is \eqref{eq-vort}. This ends the proof.
\end{proof}

The following confirms the inverse of Proposition \ref{prop A1} in the case the domain is simply connected.

\begin{proposition}\label{prop A2}
Let $(\Omega,b)$ be a lake satisfying (H1)-(H2), with $b\in W^{1,\infty}_{\loc}(\Omega)$ and with $N=0$, i.e. we assume that $\Omega$ is simply connected. Then a global interior weak solution of the vorticity formulation is a global interior weak solution of the velocity formulation.
\end{proposition}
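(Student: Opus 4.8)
The plan is to turn the velocity formulation into the vorticity formulation by representing each admissible vector test function as a perpendicular gradient. Let $\Phi\in C^\infty_c([0,\infty)\times\Omega)$ be divergence free; I first want to produce a scalar $\varphi\in C^\infty_c([0,\infty)\times\Omega)$ with $\Phi=\nabla^\perp\varphi$. Once this is in hand, the computation already carried out in the proof of Proposition \ref{prop A1} is, read as a pure rewriting, an \emph{identity} between the left-hand side of \eqref{eq-vel} evaluated at $\Phi=\nabla^\perp\varphi$ and the left-hand side of \eqref{eq-vort} evaluated at $\varphi$: it uses only $\div(bv)=0$, the algebraic relation \eqref{algebra}, the relation $\curl v=b\omega$ from condition (iii), and integrations by parts with no boundary contribution (the supports are interior). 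Since $(v,\omega)$ is assumed to be a global interior weak solution of the vorticity formulation, the latter functional vanishes for every such $\varphi$, hence the former vanishes for every divergence-free $\Phi$, which is exactly the interior velocity formulation.

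The construction of $\varphi$ is where $N=0$ is decisive. For each fixed $t$, extend $\Phi(t,\cdot)$ by zero to $\R^2$ (legitimate, since $\supp\Phi(t,\cdot)$ is a compact subset of $\Omega$). Because $\R^2$ is simply connected and $\div\Phi=0$, there is a single-valued $\varphi(t,\cdot)\in C^\infty(\R^2)$ with $\nabla^\perp\varphi=\Phi$, explicitly $\varphi(t,x):=\int_{-\infty}^{x_1}\Phi_2(t,s,x_2)\,ds$, depending on $t$ as smoothly as $\Phi$. The key point is that this $\varphi$ is compactly supported \emph{inside} $\Omega$: it is constant on each connected component of $\R^2\setminus\supp\Phi(t,\cdot)$, and the component containing the set $\R^2\setminus\Omega$, which is connected and unbounded precisely because $\Omega$ is bounded and simply connected, reaches infinity, where $\varphi=0$. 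As $\supp\Phi(t,\cdot)$ is a compact subset of $\Omega$, this same component contains a full neighborhood of $\partial\Omega$, so $\varphi$ vanishes there and outside $\Omega$; the set $\{\varphi(t,\cdot)\neq0\}$ is contained in the union of $\supp\Phi(t,\cdot)$ with its bounded complementary components, all of which lie in $\Omega$. Hence $\varphi\in C^\infty_c([0,\infty)\times\Omega)$ is an admissible scalar test function.

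It then remains to run the identity of Proposition \ref{prop A1} in reverse. Substituting $\Phi=\nabla^\perp\varphi$ and integrating by parts (vanishing boundary terms, by interior support), one transforms the integrand $\Phi_t\cdot v+(bv\otimes v):\nabla(\Phi/b)$ of \eqref{eq-vel} into the integrand $\varphi_t\,b\omega+\nabla\varphi\cdot v\,b\omega$ of \eqref{eq-vort}, using $\curl v=b\omega$ and the rewriting $v^\perp\cdot\nabla^\perp\varphi=v\cdot\nabla\varphi$; the only extra contribution is a term proportional to $\nabla|v|^2\cdot\nabla^\perp\varphi$, which integrates to zero because $\div\nabla^\perp\varphi=0$. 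The initial terms match as well, since $\int_{\OM}\Phi(0,x)\cdot v^0\,dx=\int_{\OM}\varphi(0,x)\,\curl v^0\,dx=\int_{\OM}\varphi(0,x)\,b\omega^0\,dx$ by \eqref{defi-b}. Thus the left-hand side of \eqref{eq-vel} at $\Phi$ equals the left-hand side of \eqref{eq-vort} at $\varphi$, which vanishes by hypothesis, completing the argument.

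The main obstacle is the compactly supported stream-function representation, and it is genuinely unavailable for $N\ge1$: for a multiply connected $\Omega$ the stream function $\varphi$ of a divergence-free compactly supported $\Phi$, although single valued, is constant but generally \emph{nonzero} near each island $\Cc^k$, and this constant cannot be cancelled simultaneously with the one fixed on the outer boundary. The fields whose stream functions carry such boundary constants are exactly those probing the circulations around the islands; they are tested by the velocity formulation but not by the scalar vorticity formulation, which is why the converse requires $N=0$ and why in the general case the circulations must be tracked separately. A secondary, purely technical point is to justify the manipulations at the available regularity: $\omega\in L^\infty$ together with $\div(bv)=0$ and $b\in W^{1,\infty}_{\loc}(\Omega)$ give, by the local elliptic regularity already invoked in the proof of Lemma \ref{1to1}, enough smoothness of $v$ on compact subsets of $\Omega$ for the pointwise identity \eqref{algebra} and the integrations by parts to be valid.
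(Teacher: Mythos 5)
Your argument is correct and takes essentially the same route as the paper: represent the divergence-free test field $\Phi$ as $\nabla^\perp\varphi$ with $\varphi$ compactly supported in $\Omega$ (possible precisely because, with $N=0$, the component of the complement of $\supp\Phi(t,\cdot)$ containing $\partial\Omega$ is a single unbounded set on which the constant normalizes to zero), and then run the computation of Proposition \ref{prop A1} in reverse with no boundary contributions. Your explicit construction of $\varphi$ and the discussion of why the step fails for $N\ge 1$ are more detailed than the paper's three-line proof, but the substance is identical.
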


\begin{proof}
Let us fix a divergence free test function $\F \in C^\infty_{c}([0,\infty)\times \OM)$, then there exists a stream function $\varphi$ such that $\F=\nabla^\perp \varphi$. As $\F$ is compactly supported, we infer that $\varphi$ is constant in a neighborhood of the boundary. If $\Omega$ is simply connected, there is only one connected component of $\partial \OM$, and as $\varphi$ can be chosen up to a constant, then we can consider $\varphi$ vanishing in the neighborhood of $\partial\OM$. The conclusion follows from the same computations as in the previous proposition.
\end{proof}

Concerning solutions up to the boundary, we need more regularity in order to justify \eqref{eq-vel} and the boundary terms in the integrations by parts. First, we show the following technical lemma which will be useful for the next proposition.

\begin{lemma}\label{DotProd}
Assume that $\Omega$ is a $C^3$-domain, that $v\in W^{1,p}(\Omega)$ for some $1\le p\le \infty$ satisfies $v\cdot\nu=0$ in $\partial\Omega$. Let $d(x)=\hbox{dist}(x,\partial\Omega)$ and let $\mathcal{N}$ be a neighborhood of $\partial\Omega$ where $d$ is $C^3$. Then
\begin{equation*}
v^\prime_d:=v\cdot\frac{\nabla d}{d}\in L^p(\mathcal{N}),\quad\hbox{ and }\quad \Vert v^\prime_d\Vert_{L^p(\mathcal{N})}\lesssim \Vert \nabla v\Vert_{L^p(\Omega)}
\end{equation*}
\end{lemma}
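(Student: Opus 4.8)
The plan is to use the fact that the normal component $v\cdot\nabla d$ vanishes on $\partial\Omega$, combined with a one–dimensional Hardy inequality in the normal direction; the decisive point will be a cancellation that leaves a bound in terms of $\nabla v$ alone. First I would shrink $\mathcal{N}$ to a tubular neighborhood $\mathcal{N}=\{x\in\Omega:d(x)<\delta\}$, on which (since $\Omega\in C^3$) the normal map $(\sigma,t)\mapsto \sigma+t\,\nu(\sigma)$, with $\sigma\in\partial\Omega$ and $t\in[0,\delta)$, is a $C^2$ diffeomorphism onto $\mathcal{N}$ whose Jacobian is bounded above and below by positive constants; in these coordinates $d(\sigma+t\nu(\sigma))=t$ and $\nabla d=\nu(\sigma)$. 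I then set $h:=v\cdot\nabla d$, the normal component of $v$. Since $\nabla d=\nu$ on $\partial\Omega$ and $v\cdot\nu=0$ there, the trace of $h$ on $\partial\Omega$ vanishes, and $h\in W^{1,p}(\mathcal{N})$ because $v\in W^{1,p}$ and $\nabla d\in C^2(\mathcal{N})$. As $v'_d=h/d$, it suffices to bound $\|h/d\|_{L^p(\mathcal{N})}$.

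The key step is the computation of the normal derivative of $h$. Writing $h=\sum_i v_i\,\partial_i d$ and applying the product rule (valid since $\nabla d$ is $C^2$ and $v\in W^{1,p}$), one gets $\partial_j h=\sum_i(\partial_j v_i)\,\partial_i d+\sum_i v_i\,\partial_{ij}d$, hence
\begin{equation*}
\nabla d\cdot\nabla h=\sum_{i,j}\partial_j d\,\partial_j v_i\,\partial_i d+\sum_i v_i\sum_j\partial_j d\,\partial_{ij}d.
\end{equation*}
The second sum vanishes identically, because $\sum_j\partial_j d\,\partial_{ij}d=\tfrac12\partial_i\big(|\nabla d|^2\big)=0$ (recall $|\nabla d|\equiv1$). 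Therefore, almost everywhere in $\mathcal{N}$,
\begin{equation*}
|\partial_\nu h|=|\nabla d\cdot\nabla h|=\Big|\sum_{i,j}\partial_i d\,\partial_j d\,\partial_j v_i\Big|\le |\nabla d|^2\,|\nabla v|=|\nabla v|.
\end{equation*}

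It then remains to run a one–dimensional Hardy inequality along the normal rays. For almost every $\sigma\in\partial\Omega$ the function $t\mapsto h(\sigma+t\nu(\sigma))$ is absolutely continuous (Sobolev functions are absolutely continuous on almost every line), with derivative $\partial_\nu h$ and vanishing value at $t=0$, so that $h(\sigma+t\nu)=\int_0^t\partial_\nu h(\sigma+s\nu)\,ds$. For $1<p<\infty$ the Hardy inequality on $(0,\delta)$ yields $\int_0^\delta|h/t|^p\,dt\lesssim\int_0^\delta|\partial_\nu h|^p\,dt$, while for $p=\infty$ one has directly $|h(\sigma+t\nu)|\le t\,\|\partial_\nu h\|_{L^\infty}$, i.e. $|h/t|\le\|\partial_\nu h\|_{L^\infty}$. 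Integrating over $\sigma$, changing variables back through the bounded Jacobian and using $d=t$, I obtain $\|h/d\|_{L^p(\mathcal{N})}\lesssim\|\partial_\nu h\|_{L^p(\mathcal{N})}\lesssim\|\nabla v\|_{L^p(\Omega)}$, which is the claim since $v'_d=h/d$.

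The hard part — beyond carrying the $C^2$ change of variables — is exactly the cancellation in the second step: without the identity $(D^2 d)\,\nabla d=0$ one would only control $\partial_\nu h$ by $|\nabla v|+|v|$, and the extra $\|v\|_{L^p(\mathcal{N})}$ could not be absorbed into $\|\nabla v\|_{L^p(\Omega)}$, since no Poincaré inequality is available for $v$. I would also flag that the Hardy step genuinely uses $p>1$; the endpoint $p=1$ would require separate treatment, but it is not needed for the applications here, which invoke this lemma only for $p>4$ through the Calderón–Zygmund estimate \eqref{CZ}.
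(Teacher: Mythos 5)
Your proof is correct and follows essentially the same route as the paper: there the authors integrate $\nabla v$ along the normal flow $\gamma_s$ of $-\nabla d$, where the observation that $\nu(\gamma_s(x))=\nu(x)$ is exactly your cancellation $(D^2d)\,\nabla d=\tfrac12\nabla|\nabla d|^2=0$, and the resulting representation $v'_d(x)=-\int_0^1[\nu\otimes\nu:\nabla v](\gamma_{sd(x)}(x))\,ds$ is just the averaged form of your Hardy step. Your caveat about $p=1$ is well taken --- the paper's change of variables degenerates like $(1-s)^{-1/p}$ as $s\to1$, so its argument also really needs $p>1$ --- but the lemma is only invoked for $p>4$.
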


\begin{proof}
We extend $\nu$ into a vector field on $\mathcal{N}$ by setting $\nu(x)=-\nabla d(x)$.
For $x\in\mathcal{N}$, we introduce $\gamma_s(x)$ the solution of
\begin{equation*}
\frac{d}{ds}\gamma_s(x)=-\nabla d(\gamma_s(x)),\quad \gamma_0(x)=x
\end{equation*}
and let $\overline{x}=\lim_{s\to d(x)}\gamma_s(x)\in\partial\Omega$. Then, we simply remark that
\begin{equation*}
\begin{split}
v(x)&=v(\overline{x})-\int_0^{d(x)}\nu(\gamma_s(x))\cdot\nabla v(\gamma_s(x))ds\\
\nu(\gamma_s(x))&=\nu(x)=\nu(\overline{x})
\end{split}
\end{equation*}
and therefore, we see that
\begin{equation*}
v(x)\cdot\nu(x)=v(\overline{x})\cdot\nu(\overline{x})-d(x)\int_0^1\left[\nu\otimes\nu:\nabla v\right](\gamma_{sd(x)}(x))ds.
\end{equation*}
Since for any $0\le s\le 1$, the mapping $x\mapsto \gamma_{sd(x)}(x)$ has Jacobian uniformly bounded (this can be seen from the fact that $x\mapsto (d(x),\overline{x})$ has Jacobian uniformly bounded), we see that
\begin{equation*}
v^\prime_d(x)=-\int_0^1\left[\nu\otimes\nu:\nabla v\right](\gamma_{sd(x)}(x))ds
\end{equation*}
is bounded in $L^p$.

\end{proof}

Finally, let us give the equivalence between the formulations when the circulations around $\mathcal{C}^k$ for all $k$ are independent of the time.

\begin{proposition}\label{prop A3}
Let $(\Omega,b)$ be a lake verifying (H1)-(H3) and $(\partial \OM,b)\in C^3\times C^3(\overline{\Omega})$. Then a global weak solution of the vorticity formulation, whose the circulations are constant in time, is a global weak solution of the velocity formulation. Conversely a global weak solution of the velocity formulation, whose the circulations are constant in time, is also a global weak solution of the vorticity formulation.
\end{proposition}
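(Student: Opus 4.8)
The plan is to establish both implications simultaneously by exhibiting a bijection between the admissible test functions of the two formulations and then matching \eqref{eq-vel} with \eqref{eq-vort} term by term through the algebraic identity \eqref{algebra}. Throughout I would use that the solution is regular up to the boundary: by Proposition \ref{prop CZ}, $v\in C^{1-2/p}(\overline{\OM})$, $\nabla v\in L^p(\OM)$ for every $p>4$, and $v\cdot\nu=0$ on $\partial\OM$, and that the generalized circulation $\gamma^k(v)$ from \eqref{circulation} coincides with the classical one $\gamma^k_{\mathrm{cl}}(v)$ since $v$ is continuous up to the boundary.

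First I would set up the test-function correspondence. Given a divergence-free $\Phi\in C^\infty_c([0,\infty)\times\overline{\OM})$ tangent to $\partial\OM$, tangency forces the flux of $\Phi$ through each boundary component to vanish, so $\Phi=\nabla^\perp\varphi$ for a single-valued stream function $\varphi$; tangency then reads $\partial_\tau\varphi\equiv0$ on $\partial\OM$, and after normalizing $\varphi\equiv0$ on $\partial\widetilde{\OM}$ one checks that $\varphi\in C^\infty_c([0,\infty)\times\overline{\OM})$ has compact support in time and is constant, say $\varphi=c_k(t)$, on each $\partial\Cc^k$. Conversely, for any $\varphi$ admissible in \eqref{eq-vort} the field $\nabla^\perp\varphi$ is admissible in \eqref{eq-vel}. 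It therefore suffices to prove, for every such pair $(\Phi,\varphi)$, that the left-hand side of \eqref{eq-vel} equals minus that of \eqref{eq-vort}; both implications then follow at once since the chain of identities is reversible.

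Next I would carry out the algebra. Writing $\Phi=\nabla^\perp\varphi$ and integrating by parts, $\int\Phi_t\cdot v=-\int\varphi_t\,\curl v$ plus a boundary term, and $\int(bv\otimes v):\nabla(\Phi/b)=-\int\div(bv\otimes v)\cdot(\Phi/b)$ plus a boundary term. Substituting \eqref{algebra} into the latter, the factors of $b$ cancel: the term $(bv)^\perp\curl v\cdot(\Phi/b)$ becomes $b\omega\,(v\cdot\nabla\varphi)$, reproducing the transport term of \eqref{eq-vort}, while $\tfrac{b}{2}\nabla|v|^2\cdot(\Phi/b)=\tfrac12\nabla|v|^2\cdot\nabla^\perp\varphi$ integrates to a boundary integral that vanishes by $\div\nabla^\perp\varphi=0$ and tangency. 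Treating the initial-data term in the same way and using $\curl v=b\omega$, one collects exactly the three terms of \eqref{eq-vort}, up to boundary integrals of the form $\int_{\partial\OM}\varphi\,(v\cdot\tau)\,ds=\sum_k c_k\,\gamma^k(v)$. Here is where the hypothesis enters: the contribution coming from $\int\Phi_t\cdot v$ is $\int_0^\infty\sum_k\dot c_k(t)\gamma^k(v(t))\,dt$, and since the circulations are constant in time this equals $-\sum_k\gamma^k c_k(0)$, which cancels precisely the contribution $\sum_k c_k(0)\gamma^k$ arising from the initial-data term.

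The \emph{main obstacle} is to make all of this rigorous in the vanishing-topography regime $b\sim c\,d^{a_k}$ from \eqref{BForm}, where $\Phi/b\sim d^{-a_k}$ and $\nabla(\Phi/b)$ carries an apparent $d^{-a_k-1}$ singularity. The resolution is the cancellation $v\cdot\nu=0$ combined with Lemma \ref{DotProd}: expanding the contraction, the only dangerous term is $-(v\cdot\Phi)\,(v\cdot\nabla b)/b$, and since $\nabla b/b=\nabla c/c+a_k\,\nabla d/d$ with $\nabla c/c$ bounded, it reduces modulo bounded factors to $(v\cdot\Phi)\,(v\cdot\nabla d)/d$; Lemma \ref{DotProd} gives $(v\cdot\nabla d)/d\in L^p(\OM)$, so the integrand of \eqref{eq-vel} is absolutely integrable and the formulation makes sense up to the boundary. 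To justify the integrations by parts I would perform them on the truncated domain $\{d>\delta\}$ and let $\delta\to0$; the interface integrals on $\{d=\delta\}$ carry a factor $v\cdot\nabla d$ which is $O(\delta^{1-2/p})$ by the Hölder continuity of $v$ and the condition $v\cdot\nu=0$, hence they vanish in the limit. Once the boundary terms are controlled, the cancellation described above closes both directions of the equivalence.
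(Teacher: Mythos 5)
Your proposal is correct and follows essentially the same route as the paper: the correspondence $\Phi=\nabla^\perp\varphi$ with $\varphi$ constant on each boundary component, the identity \eqref{algebra}, Lemma \ref{DotProd} together with the splitting $(bv\otimes v):\nabla(\Phi/b)=v\otimes v:\nabla\Phi-\frac{v\cdot\nabla b}{b}\,v\cdot\Phi$ to make sense of \eqref{eq-vel} near a vanishing shore, and the cancellation of the boundary circulation terms against the initial-data term via conservation of the circulations. The only difference is that you spell out the justification of the integrations by parts by truncating on $\{d>\delta\}$, which the paper leaves implicit.
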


\begin{proof}
In a smooth lake, then the global weak solution $v$ is more regular, namely thanks to the points i) and ii), the Calder\'on-Zygmund inequality (see Proposition \ref{prop CZ}) implies that $v$ belongs to $L^\infty(\mathbb{R}_+,W^{1,p}(\OM))$ for any $p\in (4,\infty)$ and that $v\cdot  \nu =0$ on $\partial\Omega$.

First, let us check that each term in \eqref{eq-vel} indeed makes sense. Note in particular that in the case of a boundary with vanishing topography, the term $b^{-1}\Phi$ can be unbounded. To fix this problem, we observe that
\begin{equation*}
(bv\otimes v): \nabla\Bigl(\frac{\Phi}b \Bigl)=v\otimes v:\nabla\Phi-\frac{v\cdot\nabla b}{b} v\cdot\Phi.
\end{equation*}
The first term creates no difficulty. For the second, we remark that given our assumption \eqref{BForm}, we have that, on $\mathcal{O}^k$,
\begin{equation*}
\frac{\nabla b(x)}{b(x)}=a_k\frac{\nabla d(x)}{d(x)}+\frac{\nabla c(x)}{c(x)}.
\end{equation*}
Once again, the second term does not create any problem, while for the first, we may use Lemma \ref{DotProd} to see that $v\cdot \frac{\nabla d(x)}{d(x)}$ in fact belongs to $L^\infty(L^p)$ for $p\in (4,\infty)$, which ends the justification of  \eqref{eq-vel}.

To prove the equivalence of the two formulations, we note that the fact that $\Phi \in C^\infty(\Omega)$ is tangent to the boundary implies that there exists $\varphi \in C^\infty(\Omega)$ constant on each connected component of $\partial \Omega$ such that $\Phi=\nabla^\perp\varphi$, and conversely $\varphi \in C^\infty(\Omega)$ with $\partial_{\tau} \varphi \vert_{\partial\Omega}=0$ implies that $\Phi:=\nabla^\perp\varphi$ is divergence free and tangent to the boundary. Therefore, it suffices to check that the boundary terms vanish in every integration by parts, following the proof in Proposition \ref{prop A1}:
\begin{itemize}
\item in the first integral, we have $\F(v\otimes v)\nu= (\F \cdot v)(v\cdot \nu)$ which is equal to zero because $v$ is tangent to the boundary (see Proposition \ref{prop CZ});
\item for the linear terms, we compute that
\begin{equation*}
\begin{split}
\int_0^{\infty}\int_\OM\Phi_t\cdot v\, dxdt+\int_\OM\Phi(0,x)\cdot v^0(x)\, dx
&=\int_{\OM}\left\{\int_0^\infty\nabla^\perp\varphi_t\cdot v \, dt+\nabla^\perp\varphi(0,x)\cdot v^0(x)\right\}\, dx\\
&=-\int_{\OM}\left\{\int_0^\infty\varphi_t\curl v \,dt+\varphi(0,x)\curl v^0(x)\right\}\, dx\\
&\quad+\int_{\partial\OM}\left\{\int_0^\infty\varphi_t (v\cdot\tau)dt+\varphi(0,x)(v^0(x)\cdot\tau)\right\}d\sigma\\
\end{split}
\end{equation*}
Thanks to the regularity of $v$, the notions of weak circulation and classical circulation are equivalent: using the fact that $\varphi(t,\cdot)$ is constant on each connected component of the boundary and the conservation of the circulations, we can rewrite the last integral as
\begin{equation*}
\begin{split}
\int_{\partial\OM}\left\{\int_0^\infty\varphi_t (v\cdot\tau)dt+\varphi(0,x)(v^0\cdot\tau)\right\}d\sigma&=\sum_{k=1}^N\gamma_k(v)\left[\int_0^\infty\partial_t\varphi_{|\mathcal{C}^k}dt+\varphi(0,\cdot)_{|\mathcal{C}^k}\right]=0.
\end{split}
\end{equation*}
In this computation, we have assumed that $\varphi \vert_{\partial \widetilde\Omega}\equiv 0$, which is the general convention (because we always consider $\varphi$ up to a constant), however we could also state that the circulation on $\partial \widetilde\Omega$ is constant because the Stokes formula gives
 $\oint_{\partial \widetilde\Omega} v(x)\cdot\tau d\sigma = \sum_{k=1}^N\gamma_k(v) + \int_{\Omega} \curl v$, where the last integral is independent of time from the transport nature \eqref{transport}.

\item in the last integration by parts, the boundary term is $\int_{0}^\infty \int_{\partial \OM}|v|^2 ( \nu \cdot \nabla^\perp \varphi)\, dxdt$, which is equal to zero because we assume that $\partial_{\tau} \varphi\equiv 0$.
\end{itemize}
This ends the proof.
\end{proof}
We note in the last two bullets of the previous proof why we have assumed that $\F$ is tangent to the boundary in the sense of Definition \ref{defi-weak-velocity} and that $\varphi$ is constant on the boundary in Definition \ref{defi-weak-sol}.

Finally, for smooth lakes, we prove that it is sufficient to prove existence of an interior solution.
\begin{proposition}\label{prop A4}
Let $(\Omega,b)$ be a lake verifying (H1)-(H3) and $(\partial \OM,b)\in C^3\times C^3(\overline{\Omega})$. Then a global interior weak solution of the vorticity formulation is a global weak solution of the vorticity formulation.
\end{proposition}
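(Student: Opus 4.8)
The plan is to deduce the up-to-the-boundary identity \eqref{eq-vort} from its interior version by a boundary cut-off followed by a limiting argument, the whole point being that the extra regularity of $v$ coming from the Calder\'on-Zygmund inequality annihilates the boundary commutator that the cut-off generates.

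First I would record the gain of regularity. Since $(v,\om)$ is an interior weak solution of the vorticity formulation, conditions (i)--(iii) of Definition \ref{defi-weak-sol} hold, so the hypotheses of Proposition \ref{prop CZ} are met with $f=\curl v = b\om$: indeed $\om\in L^\infty$ and $b$ is bounded give $b\om\in L^p(\OM)$ for every finite $p$, while $\sqrt b\,v\in L^\infty(\R_+;L^2)$ gives $bv\in L^2(\OM)$. Hence $v\in L^\infty(\R_+;W^{1,p}(\OM))$ for each $p\in(4,\infty)$, $v$ is H\"older continuous up to $\overline\OM$, and, crucially, $v\cdot\nu=0$ on $\partial\OM$. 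In particular $v\in L^\infty(\R_+\times\OM)$, so all the integrands below are locally integrable.

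Then, given a test function $\varphi\in C^\infty_c([0,\infty)\times\overline\OM)$ with $\partial_\tau\varphi|_{\partial\OM}\equiv0$, supported in $[0,T]\times\overline\OM$, I would fix a cut-off $\eta\in C^\infty(\R_+)$ with $0\le\eta\le1$, $\eta(z)=0$ for $z\le1/2$ and $\eta(z)=1$ for $z\ge1$, and set $\eta_R(x):=\eta(d(x)/R)$, $\varphi_R:=\eta_R\varphi$. Since $\eta_R$ vanishes near $\partial\OM$, $\varphi_R\in C^\infty_c([0,\infty)\times\OM)$ is an admissible interior test function, so \eqref{eq-vort} holds with $\varphi$ replaced by $\varphi_R$, and I let $R\to0$. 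In the terms where $\eta_R$ is not differentiated, namely $\int\varphi_t\eta_R\,b\om$, the advective piece $\int\eta_R\,\nabla\varphi\cdot vb\om$, and the initial term $\int\varphi(0,\cdot)\eta_R\,b\om^0$, one has $\eta_R\to1$ pointwise a.e. with integrands dominated in $L^1$ (using that $\varphi,\nabla\varphi$ are bounded with compact time support and that $v,b,\om$ are bounded); dominated convergence then returns exactly the corresponding terms of \eqref{eq-vort} for $\varphi$.

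The only delicate contribution, and the main obstacle, is the commutator produced by $\nabla\eta_R$,
\[
I_R:=\int_0^\infty\!\!\int_{\OM}\varphi\,(\nabla\eta_R\cdot v)\,b\om\,dx\,dt,\qquad \nabla\eta_R=\tfrac1R\,\eta'(d/R)\,\nabla d,
\]
which a priori diverges since $|\nabla\eta_R|\sim R^{-1}$. Here I would exploit the tangency $v\cdot\nu=0$ through Lemma \ref{DotProd}: writing $\nabla d\cdot v=d\,v^\prime_d$ with $v^\prime_d:=v\cdot\nabla d/d$, and noting that $\eta'(d/R)$ is supported in $\{R/2\le d\le R\}\subset\partial\OM_R$, the factor $d\le R$ cancels exactly the $R^{-1}$, yielding the pointwise bound $|\nabla\eta_R\cdot v|\le\|\eta'\|_{L^\infty}\,|v^\prime_d|$ on $\partial\OM_R$. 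Consequently
\[
|I_R|\lesssim\|\varphi\|_{L^\infty}\,\|b\|_{L^\infty}\,\|\om\|_{L^\infty}\int_0^T\!\!\int_{\partial\OM_R}|v^\prime_d|\,dx\,dt,
\]
and since Lemma \ref{DotProd} gives $v^\prime_d\in L^\infty(\R_+;L^p)$ with $p>4$, H\"older's inequality on the shrinking collar $\partial\OM_R$ (whose measure tends to $0$) forces $I_R\to0$. Combining this with the convergence of the other three terms produces \eqref{eq-vort} for $\varphi$, i.e. condition (iv) of Definition \ref{defi-weak-sol} up to the boundary; as (i)--(iii) are untouched, $(v,\om)$ is a global weak solution of the vorticity formulation. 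I would also remark that the argument never actually uses $\partial_\tau\varphi|_{\partial\OM}\equiv0$, that restriction being retained only for compatibility with the velocity formulation through Proposition \ref{prop A3}.
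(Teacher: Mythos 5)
Your proposal is correct and follows essentially the same route as the paper: gain $W^{1,p}$ regularity and the tangency $v\cdot\nu=0$ from the Calder\'on--Zygmund inequality (Proposition \ref{prop CZ}), test the interior identity against $\eta_R\varphi$ with a boundary cut-off, and kill the commutator term $\int\varphi\,(\nabla\eta_R\cdot v)\,b\om$ via Lemma \ref{DotProd}, which trades the $R^{-1}$ from $\nabla\eta_R$ for the factor $d\lesssim R$ coming from the tangency. Your closing observation that $\partial_\tau\varphi|_{\partial\OM}\equiv0$ is never used is also exactly the content of Remark \ref{rem vort}.
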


\begin{proof}
Let $\varphi\in C^\infty_{c}([0,\infty)\times \overline{\Omega})$ and let
\begin{equation*}
\eta_\varepsilon(x)=\eta(\varepsilon^{-1}d(x)),\quad\eta_\varepsilon\in C^3(\Omega)
\end{equation*}
where $\eta\in C^\infty_c(\mathbb{R})$ is equal to one in $[-1/2,1/2]$ and vanishes outside of $[-1,1]$. As in the previous proof, we infer by Proposition \ref{prop CZ} and Lemma \ref{DotProd} that
\begin{equation*}
\Vert v\cdot\nabla\eta_\varepsilon\Vert_{L^p(\Omega)}\lesssim \Vert v\cdot d^{-1}\nabla d\Vert_{L^p(\partial\Omega_{\varepsilon})}=\mathcal{O}(\varepsilon^{1/(2p)}),
\end{equation*}
where $\partial\Omega_{\varepsilon}$ is a $\varepsilon$-neighborhood of $\partial\Omega$ as in \eqref{Neighborhood}. Again, this yields that $\| v\cdot \nabla \eta_{\varepsilon} \|_{L^p}\to 0$ when $\varepsilon\to 0$, for any $p\in [1,\infty)$.

Since $\eta_{\varepsilon} \varphi$ is now known to be $C^\infty_{c}([0,\infty)\times\Omega)$, the identity \eqref{eq-vort} holds for $\varphi$ replaced by $\eta_\varepsilon\varphi$
\begin{equation*}\begin{split}
 \int_0^\infty\int_{\OM} \Big[ \eta_{\varepsilon} \f_t b \om +  \f (\nabla \eta_{\varepsilon} \cdot v) b \om +  \eta_{\varepsilon}  (\nabla \f \cdot v) b \om \Big] \, dxdt
 + \int_{\R^2} \eta_{\varepsilon}\f(0,x)b \om^0(x)\, dx&=0.
\end{split}\end{equation*}
Thanks to the convergence of $v\cdot \nabla \eta_{\varepsilon}$ and the estimates on $v$ and $\omega$, we are now able to pass to the limit $\varepsilon \to 0$. This proves that $(v,\om)$ is a global weak solutions of the vorticity formulation, even for test functions in $C^\infty_c([0,\infty)\times \overline{\Omega})$.
\end{proof}

\begin{remark}\label{rem vort} We note in the previous proof that we do not need the condition ``$\varphi$ constant on the boundary''. Therefore, for smooth lake, we could avoid this condition in Definition \ref{defi-weak-sol}: indeed, a global weak solution in the sense of this definition (with the condition $\partial_{\tau}\varphi\vert_{\partial\Omega}\equiv 0$) is a global weak interior solution, and the previous proposition states that it is a global weak solution for any test functions in $C^\infty_c([0,\infty)\times \overline{\Omega})$ (without boundary condition).

We can also prove the following for any $\varphi\in C^\infty_c([0,\infty)\times \overline{\Omega})$:
\begin{equation*}
 \int_0^\infty\int_{\OM} \f_t \om \, dxdt + \int_0^\infty \int_{\OM}    (\nabla \f \cdot v)  \om \, dxdt + \int_{\R^2} \f(0,x) \om^0(x)\, dx=0.
\end{equation*}
Indeed, by assumption (H2), we can divide by $b$ if $\varphi\in C^\infty_{c}([0,\infty)\times \Omega)$, and thus passing to the limit holds in the same manner.
\end{remark}

\section{$\gamma$-convergence of open sets}  \label{app_gammaconv}
Let $D$ be a bounded open set.  Let  $(\Omega_n)_{n \in \N}$ be  a sequence of open sets  included in  $D$. One says that  $(\Omega_n)_{n \in \N}$ $\gamma$-converges to $\Omega \subset D$ if for any $f \in H^{-1}(D)$, the sequence of  solutions $\p_n \in H^1_0(\Omega_n)$ of
\begin{equation*}
 -\Delta \p_n = f \: \mbox{ in } \:  \Omega_n, \quad \p_n\vert_{\partial  \Omega_n} = 0,\quad\p_n\equiv0\mbox{ on }D\setminus\Omega_n
 \end{equation*}
converges in $H^1_0(D)$  to the solution $\p \in H^1_0(\Omega)$ of
$$ -\Delta \p = f \: \mbox{ in } \:  \Omega, \quad \p\vert_{\partial  \Omega} = 0.$$

In this definition, $H^1_0(\Omega)$ and $H^1_0(\Omega_n)$ are seen as subsets of $H^1_0(D)$, through extension by zero. In a dual way, $H^{-1}(D)$ is seen as a subset of 
$H^{-1}(\Omega_n)$ and $H^{-1}(\Omega)$. As for the Hausdorff convergence of open sets, the definition of $\gamma$-convergence does not depend on the choice of the confining set $D$.

\medskip
The notion of $\gamma$-convergence is extensively discussed in \cite{henrot}. The basic example of $\gamma$-convergence is given by  increasing sequences: 
\begin{proposition}  \label{prop8}
If $(\Omega_n)_{n \in \N}$ is an increasing sequence in $D$, it $\gamma$-converges to $\Omega \: = \:  \cup \, \Omega_n$. More generally, if  $(\Omega_n)_{n \in \N}$ is included in $\Omega$ and converges to $\Omega$ in the Hausdorff sense, then it $\gamma$-converges to $\Omega$. 
\end{proposition}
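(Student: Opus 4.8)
The plan is to verify the definition of $\gamma$-convergence directly, from the variational characterization of the solutions, handling both cases simultaneously through a single geometric fact: every compact subset of $\Omega$ is eventually swallowed by $\Omega_n$. Concretely, I would first record that both hypotheses furnish $\Omega_n\subset\Omega$ together with the compact-exhaustion property
\[
\text{(P)}\qquad \forall\,K\subset\Omega \text{ compact}, \ \exists\,n_K \text{ with } K\subset\Omega_n \text{ for all } n\ge n_K .
\]
In the increasing case $\Omega=\bigcup_m\Omega_m$, compactness of $K$ gives a finite subcover and monotonicity yields $K\subset\Omega_n$ for large $n$. In the Hausdorff case (where $\Omega_n\subset\Omega$ is assumed), set $\delta=\mathrm{dist}(K,D\setminus\Omega)>0$; since $D\setminus\Omega_n\to D\setminus\Omega$ in Hausdorff distance, every point of $D\setminus\Omega_n$ lies within $\delta/2$ of $D\setminus\Omega$ once $n$ is large, whence $\mathrm{dist}(K,D\setminus\Omega_n)\ge\delta/2>0$, i.e. $K\subset\Omega_n$. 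This is precisely the property quoted in the main text from \cite[Appendix B]{GV_lac}.

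Next, I would fix $f\in H^{-1}(D)$ and let $\psi_n\in H^1_0(\Omega_n)$, $\psi\in H^1_0(\Omega)$ be the associated solutions, characterized weakly by $\int_D\nabla\psi_n\cdot\nabla\phi=\langle f,\phi\rangle$ for all $\phi\in H^1_0(\Omega_n)$ (respectively for all $\phi\in H^1_0(\Omega)$). Testing with $\psi_n$ itself and applying the Poincaré inequality on the bounded set $D$ gives $\|\nabla\psi_n\|_{L^2(D)}\le C\|f\|_{H^{-1}(D)}$ uniformly in $n$, so along a subsequence $\psi_n\rightharpoonup\psi_*$ weakly in $H^1_0(D)$. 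Because $\psi_n\in H^1_0(\Omega_n)\subset H^1_0(\Omega)$ and $H^1_0(\Omega)$ is a closed, hence weakly closed, subspace of $H^1_0(D)$, the weak limit satisfies $\psi_*\in H^1_0(\Omega)$.

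To identify $\psi_*=\psi$, I would test against a fixed $\phi\in C^\infty_c(\Omega)$: by (P) one has $\mathrm{supp}\,\phi\subset\Omega_n$ for $n$ large, so $\phi\in H^1_0(\Omega_n)$ and $\int_D\nabla\psi_n\cdot\nabla\phi=\langle f,\phi\rangle$; letting $n\to\infty$ along the subsequence yields $\int_D\nabla\psi_*\cdot\nabla\phi=\langle f,\phi\rangle$. Since $C^\infty_c(\Omega)$ is dense in $H^1_0(\Omega)$, this identity extends to all $\phi\in H^1_0(\Omega)$, and as $\psi_*\in H^1_0(\Omega)$ the uniqueness of the Dirichlet solution forces $\psi_*=\psi$; the limit being independent of the subsequence, the whole sequence converges weakly. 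Finally I would upgrade to strong convergence: $\|\nabla\psi_n\|_{L^2(D)}^2=\langle f,\psi_n\rangle\to\langle f,\psi\rangle=\|\nabla\psi\|_{L^2(D)}^2$, and in the Hilbert space $H^1_0(D)$ weak convergence together with convergence of norms implies strong convergence, which is exactly the asserted $\gamma$-convergence.

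The only genuinely delicate point is the geometric step (P) in the Hausdorff case, namely extracting the compact-exhaustion property from Hausdorff convergence of the complements; everything else is the standard weak-compactness/identification/norm-convergence scheme. The inclusion $\Omega_n\subset\Omega$ is what makes the closedness half automatic—no mass can escape into a strictly larger region—so that Hausdorff convergence is used only to supply the recovery (test-function) half.
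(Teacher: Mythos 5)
Your proof is correct, but there is nothing in the paper to compare it against: Proposition \ref{prop8} is stated in Appendix \ref{app_gammaconv} as a recalled fact, with the reader referred to \cite{henrot} for the theory, and no proof is given. Your argument is the standard one and is complete: the compact-exhaustion property (P) is exactly the consequence of Hausdorff convergence that the paper itself quotes from \cite[Appendix B]{GV_lac}, and the rest is the usual scheme of uniform energy bound, weak compactness, identification of the limit via test functions eventually supported in $\Omega_n$, and upgrade to strong convergence through convergence of the Dirichlet norms. Two small points are worth making explicit. First, in the Hausdorff case your step (P) tacitly uses the convention (standard for open sets, and the one intended in \cite{GV_lac,henrot}) that Hausdorff convergence of $\Omega_n$ to $\Omega$ inside $D$ means Hausdorff convergence of the compact complements $\overline{D}\setminus\Omega_n$ to $\overline{D}\setminus\Omega$; with a naive Hausdorff distance between the open sets themselves the exhaustion property would fail, so it is worth stating the convention. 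Second, the hypothesis $\Omega_n\subset\Omega$ is doing exactly the work you attribute to it: it gives $H^1_0(\Omega_n)\subset H^1_0(\Omega)$ by extension by zero, so the weak limit automatically lies in the weakly closed subspace $H^1_0(\Omega)$; without that inclusion this is precisely where the argument breaks down and where the counterexamples with many small holes (mentioned after the proposition, and resolved only under Sver\'ak's condition in Proposition \ref{prop9}) live. Your closing remark correctly identifies this division of labor.
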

In general, Hausdorff converging sequences are not $\gamma$-converging. We refer to \cite{henrot} for counterexamples, with domains $\Omega_n$ that have  more and more holes as $n$ goes to infinity. This kind of  counterexamples, reminiscent of homogenization problems, is the only one in dimension 2, as proved by Sverak \cite{sverak}:
\begin{proposition} \label{prop9}
Let  $(\Omega_n)_{n \in \N}$ be a sequence of open sets in $\R^2$, included in $D$.  Assume that the number of connected components of  $D \setminus \Omega_n$ is bounded uniformly in $n$. If  $(\Omega_n)_{n \in \N}$ converges in the Hausdorff sense to $\Omega$, it $\gamma$-converges to $\Omega$.  
\end{proposition}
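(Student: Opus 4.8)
The plan is to deduce the $\gamma$-convergence from the Mosco convergence of the spaces $H^1_0(\Omega_n)$ towards $H^1_0(\Omega)$ inside $H^1_0(D)$, these two notions being equivalent (see \cite{henrot}). Recall that Mosco convergence is the conjunction of two statements: (M1) every $u\in H^1_0(\Omega)$ is the strong $H^1_0(D)$-limit of a sequence $u_n\in H^1_0(\Omega_n)$; and (M2) whenever $u_n\in H^1_0(\Omega_n)$ and $u_n\rightharpoonup u$ weakly in $H^1_0(D)$, the limit satisfies $u\in H^1_0(\Omega)$. Since the Hausdorff limit $\Omega$ is fixed, the candidate limiting space is determined in advance, so it is enough to establish (M1) and (M2) along an arbitrary subsequence; I will extract subsequences freely below.

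For (M1) I would argue by density: $C^\infty_c(\Omega)$ being dense in $H^1_0(\Omega)$, it suffices to treat $u\in C^\infty_c(\Omega)$. Its support $K:=\supp u$ is a compact subset of $\Omega$, hence disjoint from $D\setminus\Omega$; by the Hausdorff convergence of the complements $D\setminus\Omega_n\to D\setminus\Omega$, one has $K\subset\Omega_n$ for all large $n$, so $u\in H^1_0(\Omega_n)$ and the constant choice $u_n=u$ works. This is the routine half.

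The substance is (M2), which is where dimension two and the uniform bound on the number of complementary components are used. Decompose $D\setminus\Omega_n=\bigcup_{i=1}^{p_n}K_n^i$ into its compact connected components, with $p_n\le p$. After extraction I may assume $p_n\equiv p$ and that each $K_n^i$ converges in the Hausdorff sense to a compact connected set $K^i$ (connectedness passes to Hausdorff limits of compacta), whence $D\setminus\Omega=\bigcup_{i=1}^p K^i$. As $u_n=0$ quasi-everywhere on each $K_n^i$, I must show that the weak limit satisfies $u=0$ q.e. on every $K^i$. When $K^i$ is a single point the statement is vacuous, since a point has zero $H^1$-capacity in the plane. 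The essential case is $\mathrm{diam}(K^i)=d_0>0$, and there I would invoke the capacity-density lower bound for connected planar sets (the geometric core of \cite{sverak}, recorded in \cite{henrot}): there is an absolute $c_0>0$ such that for every connected $E\subset\R^2$, every $x\in\overline E$ and every $0<r<\mathrm{diam}(E)$,
\begin{equation*}
\mathrm{cap}\big(\overline E\cap\overline B(x,r),\,B(x,2r)\big)\ \ge\ c_0\,\mathrm{cap}\big(\overline B(x,r),\,B(x,2r)\big),
\end{equation*}
the right-hand side being a positive dimensional constant because in dimension two the relative capacity of a ball in the concentric doubled ball is scale invariant. It is exactly this feature that fails for connected sets in dimension $\ge3$, which explains the restriction to $\R^2$. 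Fixing $x_0\in K^i$ and $0<r<d_0/2$, Hausdorff convergence forces $K_n^i$ to meet $\overline B(x_0,r/2)$ while having diameter $>r$ for $n$ large, so $K_n^i\cap\overline B(x_0,r/2)$ is thick with the same constant. Feeding this thickness into Maz'ya's capacitary Poincar\'e inequality and using $u_n=0$ q.e. there gives, uniformly in $n$,
\begin{equation*}
\int_{B(x_0,r/2)}|u_n|^2\,dx\ \lesssim\ r^2\int_{B(x_0,2r)}|\nabla u_n|^2\,dx.
\end{equation*}
Passing to the weak limit and then sending $r\to0$ shows that the quasicontinuous representative of $u$ vanishes at capacity-almost every point of $K^i$; a standard quasicontinuity argument then yields $u=0$ q.e. on $K^i$. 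Doing this for each $i$ gives $u\in H^1_0(\Omega)$, which is (M2).

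I expect the main obstacle to be precisely this passage from the uniform thickness of the obstacles $K_n^i$ to the vanishing of the weak limit on the limiting obstacle $K^i$. It rests on the genuinely two-dimensional capacity-density estimate for connected sets and on the uniform bound on the number of components, the latter excluding the Cioranescu--Murat / homogenization phenomena in which a proliferation of tiny holes creates a ``strange term'' and breaks $\gamma$-convergence. Making the quasicontinuity step fully rigorous, by controlling the capacitary potentials of the $K_n^i$ and their Hausdorff limits, is the technically delicate point on which I would concentrate the effort.
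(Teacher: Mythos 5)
The paper offers no proof of this proposition: it is quoted as Sver\'ak's theorem \cite{sverak} (see also \cite{henrot}), so there is nothing internal to compare against. Your argument reconstructs the standard proof of that theorem, and its architecture is correct: reduction to Mosco convergence via Proposition \ref{prop10}; the easy recovery half (M1) from the fact that Hausdorff convergence puts any compact subset of $\Omega$ inside $\Omega_n$ for large $n$; and the reduction of (M2) to showing that the weak limit vanishes quasi-everywhere on each Hausdorff limit $K^i$ of the (at most $p$) complementary components, using that points are polar in the plane and that nondegenerate planar continua satisfy a scale-invariant capacity density bound. The uniform bound on the number of components is used exactly where it should be (to extract component-wise Hausdorff limits and rule out homogenization effects), and the capacity density estimate for connected sets is indeed the genuinely two-dimensional ingredient.

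The one step that is asserted rather than proved --- and which you yourself flag --- is the passage ``pass to the weak limit, then send $r\to0$''. Ma\v{z}'ya's inequality gives, uniformly in $n$, $\int_{B(x_0,r/2)}|u_n|^2\,dx\le C r^2\int_{B(x_0,2r)}|\nabla u_n|^2\,dx$, and after $n\to\infty$ (using Rellich for the left-hand side) the right-hand side is only controlled by $\liminf_n\int_{B(x_0,2r)}|\nabla u_n|^2\,dx$. For a fixed $x_0$ this need not tend to $0$ as $r\to0$, because the Dirichlet energies may concentrate at $x_0$; so the inequality alone does not give $\tilde u(x_0)=0$. The standard repair is to extract a weak-$*$ limit $\mu$ of the bounded measures $|\nabla u_n|^2\,dx$, bound the $\liminf$ by $\mu(\overline{B}(x_0,2r))$, and let $r\to 0$ to get $r^{-2}\int_{B(x_0,r/2)}|u|^2\,dx\lesssim \mu(\{x_0\})+o(1)$; the atoms of the finite measure $\mu$ form a countable, hence capacity-null, set, so for quasi-every $x_0\in K^i$ the capacitary Lebesgue point theorem yields $\tilde u(x_0)=0$. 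With that insertion (plus the minor bookkeeping of working with $\overline D\setminus\Omega_n$ so that the components are genuinely compact), your proof closes and coincides with the one in \cite{henrot}.
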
  
This result is a crucial ingredient  in the convergence proofs. 

\medskip
One can characterize the $\gamma$-convergence in terms of the Mosco-convergence of $H^1_0(\Omega_n)$ to $H^1_0(\Omega)$. Namely: 
\begin{proposition} \label{prop10}
 $(\Omega_n)_{n \in \N}$ $\gamma$-converges to $\Omega$ if and only if the following two conditions are satisfied: 
 \begin{enumerate}
 \item For all $\p \in H^1_0(\Omega)$, there exists a sequence $(\p_n)_{n \in \N}$ in $H^1_0(\Omega_n)$ that converges strongly to $\p$. 
 \item For any sequence $(\p_n)_{n \in \N}$ with $\p_n$ in $H^1_0(\Omega_n)$, weakly converging to $\p$ in $H^1_0(D)$, $\p \in H^1_0(\Omega)$. 
 \end{enumerate}
 \end{proposition}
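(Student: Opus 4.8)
The plan is to recast the entire statement as an abstract fact about orthogonal projections onto closed subspaces of a Hilbert space, and then to verify that fact directly. I would work in $H := H^1_0(D)$ equipped with the inner product $\langle u, w\rangle := \int_D \nabla u \cdot \nabla w$, and regard $K_n := H^1_0(\Omega_n)$ and $K := H^1_0(\Omega)$ as closed subspaces of $H$ (via extension by zero). The key observation is that the Dirichlet solution appearing in the definition of $\gamma$-convergence is exactly an orthogonal projection: if $u_f \in H$ denotes the Riesz representative of $f \in H^{-1}(D)$, characterized by $\langle u_f, \phi\rangle = \langle f, \phi\rangle$ for all $\phi \in H$, then the relation $\langle \psi_n, \phi\rangle = \langle f,\phi\rangle$ for all $\phi \in K_n$ together with $\psi_n \in K_n$ says precisely that $\psi_n = P_{K_n} u_f$, and likewise $\psi = P_K u_f$. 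Since $f \mapsto u_f$ is an isometric isomorphism of $H^{-1}(D)$ onto $H$, the $\gamma$-convergence of $\Omega_n$ to $\Omega$ is equivalent to the strong convergence $P_{K_n} x \to P_K x$ in $H$ for every $x \in H$. Moreover, conditions (1) and (2) in the statement are, word for word, the Mosco convergence of $K_n$ to $K$. Thus the proposition reduces to the purely Hilbertian claim that strong convergence of the projections $P_{K_n} \to P_K$ is equivalent to Mosco convergence of the subspaces.

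For the direction \emph{strong projections $\Rightarrow$ Mosco} I would argue as follows. For (1), given $\psi \in K$ take $\psi_n := P_{K_n}\psi \in K_n$; then $\psi_n = P_{K_n}\psi \to P_K\psi = \psi$ strongly. For (2), let $\psi_n \in K_n$ with $\psi_n \rightharpoonup \psi$ weakly in $H$; writing $\psi_n = P_{K_n}\psi_n$ and using self-adjointness of the projections, I would compute
\[
\|\psi - P_K\psi\|^2 = \langle P_{K^\perp}\psi,\, \psi\rangle = \lim_n \langle P_{K^\perp}\psi,\, \psi_n\rangle = \lim_n \langle P_{K_n} P_{K^\perp}\psi,\, \psi_n\rangle = 0,
\]
where the last equality uses that $P_{K_n} P_{K^\perp}\psi \to P_K P_{K^\perp}\psi = 0$ strongly while $\psi_n$ stays bounded; hence $\psi = P_K\psi \in K$.

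For the converse \emph{Mosco $\Rightarrow$ strong projections}, fix $x \in H$ and set $y_n := P_{K_n}x$, which satisfies $\|y_n\| \le \|x\|$. Along any weakly convergent subsequence $y_{n_j} \rightharpoonup y$, condition (2) gives $y \in K$; testing the orthogonality $x - y_{n} \perp K_{n}$ against a strong recovery sequence $z_n \to z$ (furnished by condition (1) for an arbitrary $z \in K$) yields, by weak–strong convergence along the subsequence, $\langle x - y, z\rangle = 0$, so $x - y \perp K$ and therefore $y = P_K x$. As every subsequential weak limit equals $P_K x$, the whole sequence satisfies $y_n \rightharpoonup P_K x$. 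Finally I would upgrade this to strong convergence through the norm identity $\|y_n\|^2 = \langle P_{K_n}x, x\rangle \to \langle P_K x, x\rangle = \|P_K x\|^2$, which combined with the weak convergence forces $y_n \to P_K x$ strongly.

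The main obstacle is precisely this last converse direction: Mosco convergence is an a priori weak-type condition, and the delicate point is to extract \emph{strong} convergence of the projections rather than mere weak convergence. This is where the norm-convergence trick is indispensable, and where one must also check that every weakly convergent subsequence shares the same limit $P_K x$ so that the full sequence converges. Conceptually, the crux of the whole argument is the identification $\psi_n = P_{K_n} u_f$, which turns the analytic definition of $\gamma$-convergence into the clean Hilbert-space statement; once that is in place, the remaining steps are standard projection-theoretic manipulations.
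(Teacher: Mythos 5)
Your argument is correct, and a preliminary remark is in order: the paper offers no proof of Proposition~\ref{prop10} at all --- it is quoted as a classical characterization from the shape-optimization literature (Henrot--Pierre), so there is no in-paper argument to compare against step by step. Your route is essentially the standard one behind that reference: identify the Dirichlet solution $\psi_n$ with the orthogonal projection $P_{K_n}u_f$ of the Riesz representative $u_f$ of $f$ onto the closed subspace $K_n=H^1_0(\Omega_n)$ of $H=H^1_0(D)$ endowed with the Dirichlet inner product (this identification is exactly the variational formulation $\langle u_f-\psi_n,\phi\rangle=0$ for all $\phi\in K_n$, and surjectivity of the Riesz map turns ``for all $f\in H^{-1}(D)$'' into ``for all $x\in H$''), and then prove the purely Hilbertian equivalence between pointwise strong convergence of projections and Mosco convergence of closed subspaces. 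Both implications check out: the self-adjointness computation $\|\psi-P_K\psi\|^2=\lim_n\langle P_{K_n}P_{K^\perp}\psi,\psi_n\rangle=0$ is valid since $P_{K_n}P_{K^\perp}\psi\to P_KP_{K^\perp}\psi=0$ strongly while $\psi_n$ is bounded, and in the converse direction the identification of every weak subsequential limit of $P_{K_n}x$ with $P_Kx$ (via condition (2) plus the recovery sequences of condition (1) tested against the orthogonality $x-P_{K_n}x\perp K_n$), followed by the norm identity $\|P_{K_n}x\|^2=\langle P_{K_n}x,x\rangle\to\langle P_Kx,x\rangle=\|P_Kx\|^2$, correctly upgrades weak to strong convergence. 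The one point worth making explicit is that in the converse direction you invoke condition (2) for a weakly convergent \emph{subsequence} $y_{n_j}\in K_{n_j}$, whereas the proposition phrases (2) for full sequences; the intended (and standard) reading of the Mosco condition is the subsequential one, and that is what the equivalence genuinely requires, so you should state that convention when applying (2). With that caveat recorded, your proof is complete and self-contained.
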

One can also  characterize  $\gamma$-convergence with capacity, see \cite[Proposition 3.5.5 page 114]{henrot}. Let us finally  mention that the notion of $\gamma$-convergence of open sets is related to the more standard $\Gamma$-convergence of Di Giorgi. Loosely speaking, $\Omega_n$ $\gamma$-converges to $\Omega$ if the corresponding Dirichlet energy functional $J_{\Omega_n}$ $\Gamma$-converges to $J_\Omega$: see \cite[section 7.1.1]{henrot} for all details.

\end{document}